\newtheorem{thm}{Theorem}[section]
\newtheorem{definition}[thm]{Definition}
\newtheorem{eg}[thm]{Example}
\newtheorem{prop}[thm]{Proposition}
\newtheorem{rmk}[thm]{Remark}
\newtheorem{lemma}[thm]{Lemma}
\newtheorem{cor}[thm]{Corollary}
\newtheorem{claim}[thm]{Claim}
\newtheorem{property}{Property}
\newcommand{\1}{\mathbb{1}}
\newcommand{\C}{\mathbb{C}}
\newcommand{\D}{\mathcal{D}}
\newcommand{\F}{\mathcal{F}}
\newcommand{\N}{\mathbb{N}}
\newcommand{\Y}{\mathcal{Y}}
\newcommand{\Z}{\mathbb{Z}}
\newcommand{\ci}{\mathfrak{i}}
\newcommand{\cj}{\mathfrak{j}}
\newcommand{\gl}{\mathfrak{gl}}
\newcommand{\s}{\mathfrak{s}}
\newcommand{\co}{\mathcal{M}}
\newcommand{\h}{\mathfrak{h}}
\newcommand{\ft}{\mathfrak{t}}
\newcommand{\m}{\mathfrak{m}}
\newcommand{\0}{\mathfrak{0}}
\newcommand{\tab}{Tab(\varphi^{\xi}_{n,p,\mu})}
\begin{document}
	
	\title{Representations of Degenerate Affine Hecke Algebra of Type $C_n$ Under the Etingof-Freund-Ma Functor}
	\title[Representations of degenerate AHA of type $C_n$]{Representations of Degenerate Affine Hecke Algebra of Type $C_n$ Under the Etingof-Freund-Ma Functor}
	\date{\today}
	\author{Yue Zhao}
	\maketitle
	
	\textbf{Abstract.} We compute the image of a polynomial $GL_N$-module under the Etingof-Freund-Ma functor \cite{EFM}. We give a combinatorial description of the image in terms of standard tableaux on a collection of skew shapes and analyze weights of the image in terms of contents.
	
	\tableofcontents
	
	\section{Introduction}
	Schur-Weyl duality connects polynomial representations of $GL_N$ and representations of the symmetric group $S_n$. Let $V = {\C}^N$ denote the vector representation of $GL_N$. Then $V^{\otimes n}$ has a $GL_N$-action. Let $S_n$ be the symmetric group on $n$ indices. The tensor $V^{\otimes n}$ has a natural right $S_n$-action. By the Schur-Weyl duality, we have the decomposition
	$$V^{\otimes n} = \bigoplus_{|\lambda| = n} V^{\lambda} \boxtimes S_{\lambda},$$ 
	where $n \geq N$, $\lambda$ is a partition of $n$ with at most $N$ rows, $S_{\lambda}$ runs through all irreducible representations $S_n$ and $V^{\lambda}$ is the irreducible $GL_N$-module with highest weight $\lambda$. Moreover, the actions of Jucys-Murphy elements are diagonalizable.In \cite{AS}, Arakawa and Suzuki constructed a functor from the category of $U(\gl_N)$-modules to the category of representations of the degenerate affine Hecke algebra of type $A_n$. In \cite{CEE}, Calaque, Enriquez and Etingof generalized this functor to the category of representations of degenerate double affine Hecke algebra of type $A_n$.  Etingof, Freund and Ma \cite{EFM} extended the construction to the category of representations of degenerate affine and double affine Hecke algebra of type $BC_n$ by considering the classical symmetric pair $(\gl_N, \gl_p \times \gl_{N - p})$. As a quantization of the functors by Etingof-Freund-Ma, Jordan and Ma in \cite{JM} constructed functors from the category of $U_q(\gl_N)$-modules to the category of representations of affine Hecke algebra of type $C_n$ and from the category of quantum $\D$-modules to the category of representations of the double affine Hecke algebra of type $C^{\vee}C_n$. The construction in \cite{JM} used the theory of quantum symmetric pair $(U_q(\gl_N), B_{\sigma})$ where $B_{\sigma}$ is a coideal subalgebra. This is a quantum analogue of the classical symmetric pair.\\
	On the other hand, in \cite{Ree}, Reeder did the classification of irreducible representations of affine Hecke algebra of type $C_2$ with equal parameters. In \cite{K}, Kato indexed and analyzed the weights of representations of affine Hecke algebra of type $C_n$. In \cite{M}, Ma analyzed the image of principal series modules under the Etingof-Freund-Ma functor. Moreover, the combinatorial description of Young diagrams is used to describe irreducible representations of the symmetric group and Hecke algebra of type $A$ with standard tableaux on the Young diagram indexing the bases. Similarly, the skew shape and standard tableaux on it describes the irreducible representation of the affine Hecke algebra of type $A$. Moreover, in \cite{SV}, Suzuki and Vazirani introduced a description of some irreducible representations of the double affine Hecke algebra of type $A$ by periodic skew Young diagrams and periodic standard tableaux on it. In \cite{R}, Ram introduced the chambers and local regions and described the representations of the affine Hecke algebra. In \cite{D}, Daugherty introduce the combinatorial description of representations of degenerate extended two-boundary Hecke algebra. In \cite{DR}, Daugherty and Ram gave a Schur-Weyl duality approach to the affine Hecke algebra of type $C_n$.\\
	This paper focuses on the representations of the degenerate affine Hecke algebra of type $C_n$ and gives a combinatorial description which is similar to the combinatorial description in \cite{D} and \cite{DR} but is via a different structure, the Etingof-Freund-Ma functor. This paper is arranged as follow: Section 2-4 are about the Etingof-Freund-Ma functor, the degenerate affine Hecke algebra of type $C_n$ and $GL_N$-modules. In section 5 and section 6, we compute the underlying vector space of the image of Etingof-Freund-Ma functor and the $\Y$-actions. In section 7 and section 8, we talk about intertwining operators and define combinatorial moves. Section 9 concerns the irreducibility of the image. In section 10, we talk about how to recover a $GL_N$-module from a representation of degenerate affine Hecke algebra of type $C_n$.\\
	\textbf{Acknowledgments.} I would like to thank Monica Vazirani for her guidance and helpful discussions, Arun Ram for helpful comments on my first draft and for the suggestion on presenting the $\Y$-action, José Simental Rodríguez for detailed feedback and helpful discussions.
	\section{Definitions and notations}
	\subsection{Root system of type $C_n$}
	Let ${\h}^*$ be a finite-dimensional real vector space with basis $\{\epsilon_i | i = 1, \cdots, n \}$ and a positive definite symmetric bilinear form $(\cdot, \cdot)$ such that $(\epsilon_i, \epsilon_j) = \delta_{ij}$. Let $R_n$ be an irreducible root system of type $C_n$ with
	$$R_n = \{\epsilon_i + \epsilon_j | i, j = 1, \cdots, n\}\cup \{\epsilon_i - \epsilon_j | i, j =1, \cdots, n \text{ and } i \neq j\},$$
	and the positive roots are
	$$R_{n+} = \{\epsilon_i + \epsilon_j | i,j = 1, \cdots, n\} \cup \{\epsilon_i - \epsilon_j | 1 \leq i < j \leq n\}.$$
	For any root $\alpha$, the coroot is $\alpha^{\vee} = \dfrac{2\alpha}{(\alpha, \alpha)}$. Let $Q$ be the root lattice and $Q^{\vee}$ be the coroot lattice. 
	Let $\alpha_i = \epsilon_i - \epsilon_{i + 1}$, for $i = 1, \cdots, n - 1$ and $\alpha_n = 2\epsilon_n$. Then the collection of simple roots are
	$$\Pi_n = \{\alpha_i | i = 1 ,\cdots, n\}.$$
	For each simple root $\alpha_i$, define the reflection $s_i := s_{\alpha_i}$,
	$$s_{\alpha_i}(\lambda) = \lambda - (\lambda, \alpha_i^{\vee}) \alpha_i,$$
	where $\lambda \in {\h}^*$.
	Then the finite Weyl group $W_0$ of type $C_n$ is generated by the generators $s_1, \cdots, s_{n-1},s_n$ with the relations
	\begin{align}
		&s_i^2 = 1, \text{ for } i = 1, \cdots, n,\\
		&s_i s_{i + 1}s_i =  s_{i + 1}s_is_{i + 1}, \text{ for } i = 1, \cdots, n - 1,\\
		&s_{n - 1}s_ns_{n - 1}s_n = s_ns_{n - 1}s_ns_{n - 1},\\
		&s_i s_j =  s_j s_i, \text{ for } |i - j| > 1.
	\end{align}
	\subsection{Affine Weyl group of type $C_n$}
	Let $W = W_0 \ltimes Q^{\vee}$. For any $\iota \in {\h}^*$, where $\iota = \iota_1 \epsilon_1 + \cdots + \iota_n \epsilon_n$ and $\iota_k \in \Z$, let $y^{\iota} = y_1^{\iota_1}\cdots y_n^{\iota_n}$ and the action of $w \in W_0$ by $w . y^{\iota} = y^{w(\iota)}$.
	Let $W = W_0 \ltimes Q^{\vee}$ and the affine Weyl group of type $C_n$ is generated by $s_1,\cdots, s_{n-1},s_n$ and $Y_i$, for $i = 1, \cdots, n$ with the following additional relations to (1)-(4),
	\begin{align}
		& s_i Y_j = Y_j s_i, \text{ for } j \neq i, i + 1,\\
		& Y_i Y_j = Y_j Y_i, \\
		& s_i Y_i s_i = Y_{i + 1}, \text{ for } i = 1, \cdots, n - 1,\\
		& s_n Y_n s_n = {Y_n}^{-1}.
	\end{align}
	
	\subsection{Definition of degenerate affine Hecke algebra of type $C_n$}
	Let $\kappa_1$ and $\kappa_2$ be two parameters. The trigonometric degenerate affine Hecke algebra $H_n(\kappa_1, \kappa_2)$ is an algebra generated over $\C$ by $s_1, \cdots, s_{n-1},\gamma_n$, where we take $\gamma_n=s_n$, and $y_1, \cdots, y_n$ with relations (1)-(6) and the following relations
	\begin{align}
		& s_i y_i - y_{i + 1} s_i = \kappa_1, \text{ for } i = 1, \cdots, n - 1,\\
		& \gamma_n y_n + y_n \gamma_n = \kappa_2.
	\end{align}
	
	\subsection{$\Y$-semisimple degenerate affine Hecke algebra representations}
	Now let define what we mean by $\Y$-semisimple. Let $\Y = \C[y_1, \cdots, y_n]$ be the commutative subalgebra of the degenerate affine Hecke algebra $H_n(\kappa_1, \kappa_2)$. Let $L$ be a representation of $H_n(\kappa_1, \kappa_2)$. For a function $\zeta: \{1,\cdots,n\} \to \C$, let $\zeta_i$ denote $\zeta(i)$ and $\zeta=[\zeta_1,\cdots,\zeta_n]$. Define the simultaneous generalized eigenspace as
	$$L_{\zeta}^{gen} = \{v \in L | (y_i - \zeta_i)^k v = 0 \text{ for some } k \gg 0 \text{ and for all } i = 1, \cdots, n\}.$$
	Since the polynomial algebra $\Y$ is commutative, the restriction of $L$ on $\Y$ decomposes to a sum of simultaneous generalized eigenspace, i.e. $L = \oplus_{\zeta} L_{\zeta}^{gen}$. Similarly, define the simultaneous eigenspace
	$$L_{\zeta} = \{v \in L | y_i v = \zeta_i v \text{ for all } i = 1, \cdots, n\}.$$
	\begin{definition}
		If the restriction of $L$ on $\Y$ decomposes to a sum of simultaneous eigenspaces, i.e. $L = \oplus_{\zeta} L_{\zeta}$, then call $L$ is $\Y$-semisimple. The function $\zeta$ is called a weight and $L_{\zeta}$ is the weight space of weight $\zeta$.
	\end{definition}
	
	\section{Etingof-Freund-Ma Functor}
	We recall the definition of the Etingof-Freund-Ma functor $F_{n, p, \mu}$ in \cite{EFM}. Let $N$ be a positive number and $V$ be the vector representation of $\gl_N$. Let $p, q$ be positive integers such that $N = p + q$. Let $\ft = \gl_p \times \gl_q$ and $\ft_0$ be the subalgebra in $\ft$ consisting of all the traceless elements in $\ft$. Let $\chi$ is a character defined on $\ft$ as 
	\begin{equation}
	\chi(
	\begin{bmatrix}
		S &0\\
		0 &T
	\end{bmatrix})
	= q\cdot tr(S) - p \cdot tr(T),
	\end{equation}
	where $S \in \gl_p$ and $T \in \gl_q$. For a given $\mu \in \C$, define a functor $F_{n, p, \mu}$ from the category of $\gl_N$-modules to the category of representations of degenerate affine Hecke algebra $H_n(1, p -q-\mu  N)$
	$$F_{n, p, \mu}(M) = (M \otimes V^{\otimes n})^{\ft_0, \mu},$$
	where the $(\ft_0, \mu)$-invariant corresponds $A . v = \mu \chi(A) v$, for all $A \in \ft_0$. \\
	Let $M$ be the $0$-th tensor factor. Let $V_i$ be the $i$-th tensor factor with $V_i=V$ being the vector representation for $i=1, \cdots,n$.  In \cite{JM}, the action of the degenerate affine Hecke algebra $H_n(1, p-q-\mu N)$ is the quasi classical limit of the action of the affine Hecke algebra $\mathcal{H}_n(q,q^{\sigma},q^{(p-q-\tau)})$ generated by $T_1,\cdots, T_{n-1},T_n$ and $Y_1^{\pm}, \cdots, Y_n^{\pm}$. In the following figures, $V_i$ is the vector representation for $i=1, \cdots, n$. In \cite{JM}, the action of $T_i$ for $i=1, \cdots, n-1$ was defined by $\tau_{V_i, V_{i+1}} \circ R_{i,i+1}$, where the flip operator $\tau_{V_i,V_{i+1}}: V_i \otimes V_{i+1} \to V_{i+1} \otimes V_i$ is defined by $v_i \otimes v_{i+1} \mapsto v_{i+1} \otimes v_i$ and $R_{i,i+1}$ is the $R$ matrix acting on $V_{i} \otimes V_{i+1}$,
	\begin{center}
		\begin{tikzpicture}[scale=0.6]
			\draw (-2,1) node {$T_i$};
			\draw (-1,1) node {$=$};
			\draw [thick] plot (5,0) to[bend left=10] (4.65,0.65);
			\draw [thick] (4.4,1.05) to [bend left=10] (4,2);
			\draw [thick] (4,0) to[bend right=10] (5,2);
			\draw [thick] (0,2)--(0,0);
			\draw [thick] (1,2)--(1,0);
			\draw [thick] (8,2)--(8,0);
			\draw (2.5,1)node   {$\cdots$};
			\draw (6.5,1)node   {$\cdots$};
			\draw (0, -0.5) node {\tiny{$M$}};
			\draw (0, 2.5) node {\tiny{$M$}};
			\draw (1, -0.5) node {\tiny{$V_1$}};
			\draw (1, 2.5) node {\tiny{$V_1$}};
			\draw (4, -0.5) node {\tiny{$V_i$}};
			\draw (4, 2.5) node {\tiny{$V_i$}};
			\draw (5, -0.5) node {\tiny{$V_{i+1}$}};
			\draw (5, 2.5) node {\tiny{$V_{i+1}$}};
			\draw (8, -0.5) node {\tiny{$V_n$}};
			\draw (8, 2.5) node {\tiny{$V_n$}};
		\end{tikzpicture}
	\end{center}
Let $T_i=s_ie^{ \hbar s_i/2}$. Proposition 39 in \cite{J} and section 10.7 of \cite{JM} computed the action of $s_i$, i.e. $s_i$ acts on $F_{n,p,\mu}(M)$ by exchanging the $i$-th and ${(i + 1)}$-th tensor factors.\\
The action of $T_n$ was defined as follows
	\begin{center}
	\begin{tikzpicture}[scale=0.6]
		\draw (-2,1) node {$T_n$};
		\draw (-1,1) node {$=$};
		\draw [thick] (0,2)--(0,0);
		\draw [thick] (1,2)--(1,0);
		\draw [thick] (2,2)--(2,0);
		\draw [thick] (5,2)--(5,1.4);
		\draw [thick] (5,0.6)--(5,0);
		\draw (3.5,1) node   {$\cdots$};
		\draw (0, -0.5) node {\tiny{$M$}};
		\draw (0, 2.5) node {\tiny{$M$}};
		\draw (1, -0.5) node {\tiny{$V_1$}};
		\draw (1, 2.5) node {\tiny{$V_1$}};
		\draw (2, -0.5) node {\tiny{$V_2$}};
		\draw (2, 2.5) node {\tiny{$V_2$}};
		\draw (5, -0.5) node {\tiny{$V_n$}};
		\draw (5, 2.5) node {\tiny{$V_n$}};
        \draw [thick] (4.6,1.4) rectangle (5.4,0.6);
         \draw (5,1) node {\tiny{$J_V$}};
	\end{tikzpicture}
\end{center}
where the matrix $J_V$ is a right-handed numerical solution of the reflection equation $R_{21}(J_V)_1R_{12}(J_V)_2=(J_V)_2R_{21}(J_V)_1R_{12}$ in  section 7 of \cite{JM}. Section 10.7 of \cite{JM} compute the quasi classical limit of $T_n$. Then $\gamma_n$ acts on $F_{n,p,\mu}(M)$ by multiplying the $n$-th tensor factor by $J = diag(I_p,-I_q)$.\\
The action of $Y_1$ was define by $q^{\frac{2n}{N}+\mu(q-p)-N}R^{-1}_{01} \circ \tau_{V,M} \circ R^{-1}_{10} \circ \tau_{M,V}$.
\begin{center}
	\begin{tikzpicture}[scale=0.6]
		\draw (-6.5,1) node {$Y_1$};
			\draw (-3.5,1) node {\tiny{$=q^{\frac{2n}{N}+\mu(q-p)-N}$}};
		\draw [thick] plot [smooth] coordinates {(1,0) (0.85,0.3) (-0.8,1) (-0.2,1.45)};
	    \draw [thick] plot [smooth] coordinates {(0.2,1.5) (0.9,1.7) (1,2)};
	    \draw [thick] (0,2)--(0,0.8);
	     \draw [thick] (0,0.45)--(0,0);
	      \draw [thick] (2,2)--(2,0);
	      \draw [thick] (3,2)--(3,0);
	       \draw [thick] (6,2)--(6,0);
	       \draw (4.5,1)node   {$\cdots$};
	      \draw (0, -0.5) node {\tiny{$M$}};
	       \draw (0, 2.5) node {\tiny{$M$}};
	       \draw (1, -0.5) node {\tiny{$V_1$}};
	         \draw (1, 2.5) node {\tiny{$V_1$}};
	          \draw (2, -0.5) node {\tiny{$V_2$}};
	         \draw (2, 2.5) node {\tiny{$V_2$}};
	         \draw (6, -0.5) node {\tiny{$V_n$}};
	        \draw (6, 2.5) node {\tiny{$V_n$}};
	\end{tikzpicture}
\end{center}
Let $Y_1=e^{y_1 \hbar}$. By Proposition 10.13 in \cite{JM},
\begin{equation}
y_1=-\sum_{s,t}(E_s^t)_0 \otimes (E_t^s)_1+\dfrac{n}{N}+\dfrac{\mu (q-p)}{2} - \dfrac{N}{2},
\end{equation}
	where $E_s^t$ is the $N \times N$ matrix with the $(s, t)$ entry being $1$ and other entries being $0$ and $(E_s^t)_i$ means $E_s^t$ acting on the $i$-th tensor factor.  Let $s_{k,l}$ denote the transposition $(k, l) \in S_n$ and $\gamma_k \in W_0$ denote the action multiplying the $k$-th factor by $J$. In \cite{EFM}, the action of $y_1$ is given by 
	\begin{equation}
	-\sum_{s|t} (E_s^t)_0 \otimes (E_t^s)_1 + \dfrac{p - q - \mu N}{2} \gamma_1 + \dfrac{1}{2}\sum_{l > 1} s_{1,l} + \dfrac{1}{2}\sum_{l \neq 1} s_{1,l} \gamma_1 \gamma_l,
	\end{equation}
	where $\sum_{s | t} = \sum_{s = 1}^{p} \sum_{t = p + 1}^{n} + \sum_{t = 1}^p \sum_{s = p + 1}^n$. In section 6.1, we show that the computation via equation (13) agrees with equation (12). By the relation $y_k = s_{k-1}y_{k-1}s_{k-1} - s_{k-1}$, we could compute the action of $y_k$ for $k=1, \cdots, n$.
	\section{$GL$-module}
	We consider images of polynomial $GL_N$-modules under Etingof-Freund-Ma functor.
	Recall the facts about polynomial $GL_N$-modules. Let $M$ be a polynomial $GL_N$-module and $H \subset GL_N$ be the collection of invertible diagonal matrices. Let $v \in M$ satisfy 
	$$x.v=x_1^{\lambda_1}\cdots x_N^{\lambda_N}v,$$
	for any $x=diag(x_1,\cdots,x_N) \in H$. Then $v$ is a weight vector of $H$-weight $\lambda=(\lambda_1,\cdots,\lambda_N)$. The subspace $$M(\lambda)= \{v \in M | x.v=x_1^{\lambda_1}\cdots x_N^{\lambda_N}v, x \in H\}$$ is called the weight space of weight $\lambda$. Then the polynomial $GL_N$-module $M$ is a direct sum of weight spaces
	$$M=\bigoplus M(\lambda).$$
	Let $B \subset GL_N$ be the collection of all invertible upper triangular matrices. Let $v \in M$ be a generator of $M$. If $v$ satisfies $x.v=c(x)v$ for some function $c(x)$ and any $x \in B$, then $v$ is called a highest weight vector. If $M$ has the unique highest weight vector up to a scalar of the highest weight $\xi$, then $M$ is a highest weight module with the highest weight $\xi$ and let us denote $M$ by $V^{\xi}$. A $GL_N$-module $M$ is irreducible if and only if $M$ is a highest weight $GL_N$-module. Furthermore, two highest weight $GL_N$-modules are isomorphic if and only if they have the same highest weight. Let $\xi=\sum_{i=1}^N \xi_i \epsilon_i$ satisfying $\xi_1\geq \xi_2 \geq \cdots \geq \xi_N$ and $\xi_i \in \Z$ for $i=1,\cdots,N$. Then $\xi$ is an integral dominant weight of $GL_N$. Let $P^+$ denote the collection of all integral dominant weights and $P^+_{\geq 0}$ denote the collection of all integral dominant weights $\xi=\sum_{i=1}^N \xi_i \epsilon_i$ with $\xi_i \in \N$, for $i=1,\cdots,N$. Then the highest weight modules with highest weights $\xi \in P^+_{\geq 0}$ are all the irreducible polynomial $GL_N$-modules. Let $M$ be a rational $GL_N$-module. Then $M= det^m \otimes N$ for some $m \in \Z$ and a polynomial $GL_N$-module $N$. Then the highest weight modules with integral dominant highest weights are all the irreducible rational $GL_N$-modules.\\
	The collection $P^+_{\geq 0}$ has a one-to-one correspondence with the collection of partitions with at most $N$ parts and thus the one-to-one correspondence with Young diagrams with at most $N$ rows. For the ease of writing, for each irreducible polynomial $GL_N$-module $V^{\xi}$ with highest weight $\xi \in P^+_{\geq 0}$, let us denote the corresponding partition $(\xi_1, \cdots,\xi_N)$ and Young diagram also by $\xi$. Moreover, define $|\xi|=\sum_{i=1}^N \xi_i$ for $\xi \in P^+$.\\
	For a highest weight $GL_N$-module $V^{\xi}$, $\xi \in P^+_{\geq 0}$, with weight space decomposition $V^{\xi}=\bigoplus V^{\xi}(\lambda)$, the character of $V^{\xi}$
	$$\chi_{V^{\xi}}=\sum_{\lambda} dim(V^{\xi}(\lambda)) x_1^{\lambda_1}\cdots x_N^{\lambda_N}$$
	is the Schur polynomial $s_{\xi}(x_1, \cdots, x_N)$ of shape $\xi$.\\
	By Pieri's rule, $$s_{\xi}e_1=\sum_{\nu} s_{\nu},$$
	where $\nu \in P^+_{\geq 0}$ runs through all the shapes obtained by adding a cell to some row of $\xi$. Observe that $e_1=s_{\xi}$, where $\xi=(1)$, is the character of the vector representation $V$ of $GL_N$. This fact indicates how the tensor product of an irreducible polynomial $GL_N$-module and vector representation decomposes into a sum of irreducible polynomial $GL_N$-modules.
	\section{Invariant space}
In this section, we compute the underlying vector space $F_{n,p,\mu}(V^{\xi})=(M \otimes V^{\otimes n})^{\ft_0, \mu}$ by finding a special basis of it and then index the basis elements by a collection of standard tableaux.\\
	\subsection{Definition of the invariant space}~\\

	Let $M$ be a $GL_N$-module, then $M$ has a $\gl_N$-module structure. For any $X \in \gl_N$ and $v \in M$,
	$$X.v = \frac{d}{dt}(e^{tX}.v)_{t=0}.$$
	Recall the notations, $K=GL_p \times GL_q$, $Lie(K)=\ft$ and $\ft_0 \subset \ft$ which is the collection of traceless matrices in $\ft$.
	\begin{prop}
		The underlying vector space is invariant under tensoring powers of the determinant representation, i.e. $(det^{m} \otimes M\otimes V^{\otimes n})^{\ft_0, \mu} \cong (M \otimes V^{\otimes n})^{\ft_0, \mu}$, for any $m \in \C$.
	\end{prop}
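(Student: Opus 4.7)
The plan is to exhibit an explicit $H_n$-equivariant isomorphism induced by a choice of nonzero vector $z_0$ spanning the one-dimensional space $det^m$. The key observation is that $\gl_N$ acts on $det^m$ by the character $X \mapsto m\cdot \mathrm{tr}(X)$, and by definition every $A \in \ft_0$ is traceless when viewed as an $N\times N$ block diagonal matrix, so $A.z_0 = 0$. Applying the Leibniz rule to the $\gl_N$-action on the triple tensor product gives
\[
A.(z_0 \otimes v \otimes w) \;=\; z_0\otimes (A.v\otimes w) \;+\; z_0\otimes(v\otimes A.w),
\]
so $z_0\otimes v\otimes w$ satisfies the $(\ft_0,\mu)$-invariance condition if and only if $v\otimes w$ does. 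This makes the map
\[
\Phi:\; (det^m\otimes M\otimes V^{\otimes n})^{\ft_0,\mu}\longrightarrow (M\otimes V^{\otimes n})^{\ft_0,\mu}, \qquad z_0\otimes v\otimes w\longmapsto v\otimes w,
\]
a vector space isomorphism between the two invariant subspaces.

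Next I would upgrade $\Phi$ to an $H_n(1,p-q-\mu N)$-module isomorphism by checking each generator. The operators $s_i$ for $i<n$ and $\gamma_n$ act by permuting or rescaling $V$-factors only, so they are trivially compatible with $\Phi$. For $y_1$, the only summand in formula (13) that touches $M$ is $-\sum_{s|t}(E_s^t)_0\otimes (E_t^s)_1$; but the range of summation forces $s$ and $t$ into distinct blocks $\{1,\ldots,p\}$ and $\{p+1,\ldots,N\}$, so $s\neq t$ and every relevant $E_s^t$ is traceless. The same Leibniz computation gives $E_s^t.z_0=0$, and hence this summand commutes with $\Phi$. The remaining terms $\tfrac{p-q-\mu N}{2}\gamma_1$, $\tfrac{1}{2}\sum_{l>1}s_{1,l}$, and $\tfrac{1}{2}\sum_{l\neq 1}s_{1,l}\gamma_1\gamma_l$ act only on $V$-factors and are transparent. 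Compatibility for every $y_k$ then propagates from $y_1$ via the recursion $y_k = s_{k-1}y_{k-1}s_{k-1} - s_{k-1}$.

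The main obstacle is not conceptual but notational: after inserting $det^m$ as an extra leftmost tensor factor, one must keep straight that the subscript ``$0$'' in the formula for $y_1$ still refers to the $M$-slot, while the new $det^m$-slot is absorbed silently precisely because every operator in the formula that could conceivably act on it has vanishing trace. Once this bookkeeping is clear, the proposition reduces to a one-line consequence of the tracelessness built into $\ft_0$, and the proof requires no combinatorial or representation-theoretic input beyond the definitions.
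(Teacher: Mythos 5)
Your argument is correct and is essentially the paper's own proof: both rest on the observation that every $A \in \ft_0$ is traceless, so it kills the $det^m$ factor, and the Leibniz rule then identifies the two $(\ft_0,\mu)$-invariance conditions. The additional verification that $\Phi$ intertwines the $H_n(1,p-q-\mu N)$-action goes beyond what the proposition asserts (it only claims an isomorphism of underlying vector spaces, which the paper likewise proves), but that extra material is also sound.
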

	\begin{proof}
		Take any element from $(det^{m} \otimes M\otimes V^{\otimes n})^{\ft_0, \mu}$, we can denote it by $\1 \otimes w$, where $w \in M \otimes V^{\otimes n}$. According to the definition of invariant space
		\begin{align*}
			&(det^{m} \otimes M\otimes V^{\otimes n})^{\ft_0, \mu}\\
			= &\{ \1\otimes w | A . (\1\ \otimes w) =  \mu \chi(A) (\1 \otimes w) \text{, for any }A \in \ft_0 \}.
		\end{align*}
		Compute the action of $A \in \ft_0$
		\begin{align*}
			A . \1 &= \dfrac{d}{dt}(e^{tA} . \1)_{t=0} \\
			&= \dfrac{d}{dt}(det^{m}(e^{tA}))_{t=0} . \1 \\
			&= \dfrac{d}{dt}(e^{m \cdot tr(tA)})_{t=0} . \1 = 0,
		\end{align*}
		since $tr(A)=0$. Then it follows
		\begin{align*}
			A . (\1 \otimes w) &= (A . \1) \otimes w + \1 \otimes (A . w)\\
			&= \1 \otimes (A . w).
		\end{align*}
		Hence 
		\begin{align*}
			&(det^{m} \otimes M\otimes V^{\otimes n})^{\ft_0, \mu}\\
			=& \{ \1\otimes w | \1 \otimes (A . w) = \mu \chi(A) (\1 \otimes w) \text{, for any }A \in \ft_0 \} \\
			\cong & \{w | A . w = \mu \chi(A) w \text{, for any }A \in \ft_0 \} \\
			=& (M \otimes V^{\otimes n})^{\ft_0, \mu} .
		\end{align*}
	\end{proof}
	\begin{rmk}
		For an irreducible rational $GL_N$-module $M$, we could write $M = det^m \otimes V^{\xi}$ for some integer $m$ and some highest weight module $V^{\xi}$ with the highest weight $\xi \in P^+_{\geq 0}$ such that $\xi_N=0$. Then $(M \otimes V^{\otimes n})^{\ft_0, \mu} = (V^{\xi} \otimes V^{\otimes n})^{\ft_0, \mu}$. So it is enough to consider highest weight module $V^{\xi}$ with highest weight $\xi \in P^+_{\geq 0}$ such that $\xi_N=0$, which is associated to partitions $\xi$ of length at most $N-1$.
	\end{rmk}
	
	\subsection{Computation of the $(\ft_0,\mu)$ invariant space}~\\
	\begin{prop}
		The $(\ft_0, \mu)$ invariant space $F_{n,p,\mu}(V^{\xi})=(V^{\xi} \otimes V^{\otimes n})^{\ft_0,\mu}$, for $\mu \in \C$ and $\xi \in P^+_{\geq 0}$.
		\begin{align*}
			(V^{\xi} \otimes V^{\otimes n})^{\ft_0, \mu} \cong &  Hom_{\ft_0}({\1}_{\mu \chi }, Res_{\ft_0}^{\gl_N}V^{\xi} \otimes V^{\otimes n})\\
			\cong & Hom_{\ft}({\1}_{\theta}, Res_{\ft}^{\gl_N}V^{\xi} \otimes V^{\otimes n}),
		\end{align*}
		
		where ${\1}_{\theta}$ is a one-dimensional $\ft$-module and
		$${\1}_{\theta}= (\mu q + \frac{|\xi|+n}{N})tr_{\gl_p} + (-\mu p + \frac{|\xi| + n}{N}) tr_{\gl_q}.
		$$
		
	\end{prop}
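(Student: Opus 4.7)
The plan is to establish the two claimed isomorphisms in sequence. The first, $(V^{\xi} \otimes V^{\otimes n})^{\ft_0, \mu} \cong \text{Hom}_{\ft_0}(\1_{\mu\chi}, V^{\xi} \otimes V^{\otimes n})$, is essentially a rewriting of the definition: a vector $v$ lies in the $(\ft_0, \mu)$-invariant space precisely when $A \cdot v = \mu\chi(A) v$ for every $A \in \ft_0$, which is exactly the condition for the assignment $1 \mapsto v$ to be an $\ft_0$-module map from the one-dimensional module $\1_{\mu\chi}$. So this step is immediate and I would dispatch it in one line.

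For the second isomorphism, I would exploit the decomposition $\ft = \ft_0 \oplus \C \cdot I_N$, where $I_N = \text{diag}(I_p, I_q) \in \ft$ is the identity matrix of $\gl_N$; by definition of $\ft_0$ as the traceless part, $I_N \notin \ft_0$ and spans a complementary line. The natural restriction map $\text{Hom}_{\ft}(\1_\theta, -) \to \text{Hom}_{\ft_0}(\1_{\mu\chi}, -)$ is injective (a $\ft$-map out of a one-dimensional module is determined by the image of $1$), and the task is to show it is surjective, i.e.\ that every $\ft_0$-equivariant map from $\1_{\mu\chi}$ automatically extends to an $\ft$-equivariant map into $\1_\theta$. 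Because $\ft/\ft_0$ is spanned by the single element $I_N$, this reduces to two scalar identities.

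The first identity is $\theta|_{\ft_0} = \mu\chi|_{\ft_0}$. Substituting the given formula for $\theta$, I expect the difference $\theta(S, T) - \mu\chi(S, T)$ to collapse to $\frac{|\xi|+n}{N}\bigl(\text{tr}(S) + \text{tr}(T)\bigr)$, which vanishes on $\ft_0$ since $\text{tr}(S) + \text{tr}(T) = 0$ there. The second identity is that $I_N$ acts on $V^{\xi} \otimes V^{\otimes n}$ by the scalar $\theta(I_N)$. Since $I_N \in \gl_N$ is central, it acts as a scalar on any polynomial $GL_N$-module through its polynomial degree, namely by $|\xi|$ on $V^{\xi}$ and by $n$ on $V^{\otimes n}$, hence by $|\xi| + n$ on the tensor; and evaluating $\theta(I_N) = p\bigl(\mu q + \tfrac{|\xi|+n}{N}\bigr) + q\bigl(-\mu p + \tfrac{|\xi|+n}{N}\bigr)$ produces the same value $|\xi| + n$. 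Thus the extension is automatic and both isomorphisms follow. There is no genuine obstacle in the argument; the substantive content of the proposition is really the identification of the character $\theta$ itself, which the statement already provides, and the proof amounts to the two short character computations above.
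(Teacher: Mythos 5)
Your proposal is correct and follows essentially the same route as the paper: both rest on the decomposition $\ft = \ft_0 \oplus \C I_N$, the vanishing of $\chi$ on the central line, and the fact that $I_N$ acts on $V^{\xi}\otimes V^{\otimes n}$ by the total polynomial degree $|\xi|+n$. The only difference is organizational — the paper solves for the character $\1_{\theta}$ from these constraints, while you verify that the stated $\theta$ satisfies them — which is the same computation read in the opposite direction.
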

	\begin{proof}
		The $(\ft_0, \mu)$ invariant space $F_{n,p,\mu}(V^{\xi})=(V^{\xi} \otimes V^{\otimes n})^{\ft_0, \mu}$ is defined to be the subspace
		$$
		\{v \in V^{\xi}  \otimes V^{\otimes n} | A v = \mu \chi(A) v\text{ for any A }\in \ft_0\}.
		$$
		To compute this subspace, we lift it to a $\ft$ invariant space. Let $\1_{\psi}$ the one-dimensional $\ft$-module such that
		\begin{align*}
			&(V^{\xi} \otimes V^{\otimes n})^{\ft_0, \mu}\\
			= &(Res_{\ft}^{\gl_N}(V^{\xi}  \otimes V^{\otimes n}) \otimes \1_{\psi})^{\ft}.
		\end{align*}
		Let $\ft = \ft_0 \oplus \mathbb{C}\{I_N\}$. For any $P \in \ft$, there is a unique decomposition $P = A + B$ such that $A \in \ft_0$ and $B = bI_N$ for some $b \in \C$. So the $\ft$-invariant corresponds to $$\{v \in V^{\xi}  \otimes V^{\otimes n} | Pv + \1_{\psi}(P)v = 0 \}$$. Then $Pv + \1_{\psi}(P)v = Av + Bv  + \1_{\psi}(P)v = 0$. And $B = bI_N$ acts by the scalar $$b(|\xi| + n) = (|\xi| + n) \dfrac{tr(B)}{N}$$. Also, we have $\chi(P) = \chi(A) + \chi(B) = \chi(A)$, since $\chi(B) = qbp-pbq = 0$. So 
		\begin{align*}
			&\{v \in V^{\xi}  \otimes V^{\otimes n} | Pv + \1_{\psi}(P)v = 0 \} \\
			= &\{v \in V^{\xi} \otimes V^{\otimes n} | Av = \mu \chi(A)v\}.
		\end{align*}
		
		For any $P \in \ft$ with
		$$
		P = 
		\begin{bmatrix}
			S &0\\
			0 &T
		\end{bmatrix}
		$$
		where $S \in \gl_p$ and $T \in \gl_q$, we have
		\begin{align*}
			\1_{\psi}(P) &=-\mu \chi (A) - \dfrac{|\xi|+n}{N} tr(B)\\
			&=-\mu \chi (P) - \dfrac{|\xi|+n}{N} tr(P)\\
			&= (-\mu q - \frac{|\xi| + n}{N})tr_{\gl_p}(S)+ (\mu p - \frac{|\xi| + n}{N})tr_{\gl_q}(T).\\
		\end{align*}
		Hence it follows that the one dimensional $\ft$-module $$\1_{\theta} = (\mu q + \frac{|\xi|+n}{N})tr_{\gl_p} + (-\mu p + \frac{|\xi| + n}{N}) tr_{\gl_q}.$$
	\end{proof}
\begin{rmk}
	The $(\ft, \1_{\theta})$ invariant space above is equivalent to the following $K$ invariant space.
\begin{align*}
	(V^{\xi} \otimes V^{\otimes n})^{\ft_0, \mu} \cong &  Hom_{\ft_0}({\1}_{\mu \chi }, Res_{\ft_0}^{\gl_N}V^{\xi} \otimes V^{\otimes n})\\
	\cong & Hom_{\ft}({\1}_{\theta}, Res_{\ft}^{\gl_N}V^{\xi} \otimes V^{\otimes n})\\
	\cong & Hom_K(det^a \boxtimes det^b, V^{\xi} \boxtimes V^{\otimes n}),
\end{align*}
where $a=\mu q +\frac{|\xi|+n}{N}$	and $b=-\mu p + \frac{|\xi|+n}{N}$.
\end{rmk}
	\subsection{A basis of invariant space and standard tableaux}~\\
	The characters of irreducible polynomial $GL_N$-modules are Schur functions. So we could consider the restriction of $V^{\xi} \otimes V^{\otimes n}$ by exploring Schur functions. Recall the following fact of Schur functions.
	\begin{prop}
		Let $s_{\nu}(x_1, \cdots, x_p, z_{p+1}, \cdots, z_N)$ be the character of $V^{\nu}$, then
		$$
		s_{\nu} (x_1, \cdots, x_p, z_{p+1}, \cdots, z_N) = \Sigma c^{\nu}_{\omega_1, \omega_2} s_{\omega_1}(x_1, \cdots, x_p) s_{\omega_2}(z_{p+1}, \cdots, z_N),
		$$
		where $\omega_1$ is a highest weight of $GL_p$ and $\omega_2$ is a highest weight of $GL_q$, $c^{\nu}_{\omega_1,\omega_2}$ is the Littlewood-Richardson coefficient.
	\end{prop}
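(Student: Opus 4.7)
The statement is the classical branching identity for $GL_N \downarrow GL_p \times GL_q$ rewritten at the level of characters, and the natural proof goes through the combinatorial definition of Schur functions via semistandard Young tableaux. The plan is to decompose each semistandard filling of $\nu$ with entries in $\{1,\dots,N\}$ into the portion with ``small'' entries (in $\{1,\dots,p\}$) and the portion with ``large'' entries (in $\{p+1,\dots,N\}$), and then recognize the two halves as a straight-shape Schur function and a skew Schur function respectively.

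First I would recall that
\[
s_{\nu}(y_1,\dots,y_N) \;=\; \sum_{T} y^{T},
\]
the sum being over semistandard Young tableaux $T$ of shape $\nu$ with entries in $\{1,\dots,N\}$, where $y^T$ records the multiplicities of the entries. Fix such a $T$ and let $\omega_1$ be the sub-diagram of $\nu$ consisting of boxes whose entry lies in $\{1,\dots,p\}$. The semistandard condition (weakly increasing along rows, strictly increasing down columns) forces $\omega_1$ to be a genuine Young diagram contained in $\nu$, the entries in $\omega_1$ to form a semistandard tableau of shape $\omega_1$ in the alphabet $\{1,\dots,p\}$, and the remaining entries to form a semistandard filling of the skew shape $\nu/\omega_1$ in the alphabet $\{p+1,\dots,N\}$. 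Conversely, every such pair of fillings glues back to a unique $T$.

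Summing over all $T$ and separating variables according to this bijection yields
\[
s_{\nu}(x_1,\dots,x_p,z_{p+1},\dots,z_N) \;=\; \sum_{\omega_1 \subseteq \nu} s_{\omega_1}(x_1,\dots,x_p)\, s_{\nu/\omega_1}(z_{p+1},\dots,z_N).
\]
At this point I would invoke the Littlewood-Richardson expansion for skew Schur functions,
\[
s_{\nu/\omega_1} \;=\; \sum_{\omega_2} c^{\nu}_{\omega_1,\omega_2}\, s_{\omega_2},
\]
which follows from Pieri's rule and associativity of the ring structure on symmetric functions (equivalently, from the fact that $c^{\nu}_{\omega_1,\omega_2} = \langle s_{\omega_1} s_{\omega_2}, s_{\nu}\rangle = \langle s_{\omega_2}, s_{\nu/\omega_1}\rangle$ under the Hall inner product). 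Substituting this expansion and collecting terms gives the desired identity.

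The proof contains no real obstacle because both ingredients are standard; the only thing one should double-check is the splitting step, namely that the small-entry subset of $T$ really is a Young diagram. This is immediate once one observes that the cell in row $i$, column $j$ of $T$ has an entry at least as large as the cell in row $i$, column $j-1$ and strictly larger than the cell in row $i-1$, column $j$, so membership in $\{1,\dots,p\}$ is preserved under moving up or to the left. One could alternatively replace the combinatorial argument by the representation-theoretic statement that $\mathrm{Res}^{GL_N}_{GL_p \times GL_q} V^{\nu} \cong \bigoplus_{\omega_1,\omega_2} c^{\nu}_{\omega_1,\omega_2}\, V^{\omega_1} \boxtimes V^{\omega_2}$ and pass to characters, but the tableau proof is self-contained and fits the combinatorial flavor of the paper.
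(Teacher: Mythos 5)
Your proof is correct. Note that the paper itself offers no proof of this proposition at all: it is introduced with ``Recall the following fact of Schur functions'' and is justified only implicitly, by the remark immediately following it that $c^{\nu}_{\omega_1,\omega_2}$ is the multiplicity of $V^{\omega_1}\boxtimes V^{\omega_2}$ in $\mathrm{Res}^{GL_N}_{K}V^{\nu}$ --- i.e.\ the paper leans on the representation-theoretic branching statement and passes to characters, which is exactly the alternative route you mention in your last sentence. Your tableau-splitting argument is the standard self-contained combinatorial proof: the key splitting step (that the cells of a semistandard tableau with entries in $\{1,\dots,p\}$ form a Young diagram $\omega_1\subseteq\nu$) is verified correctly, the resulting identity $s_{\nu}(x,z)=\sum_{\omega_1\subseteq\nu}s_{\omega_1}(x)\,s_{\nu/\omega_1}(z)$ is right, and the expansion $s_{\nu/\omega_1}=\sum_{\omega_2}c^{\nu}_{\omega_1,\omega_2}s_{\omega_2}$ via the adjunction $\langle s_{\nu/\omega_1},s_{\omega_2}\rangle=\langle s_{\nu},s_{\omega_1}s_{\omega_2}\rangle$ is the standard fact needed to finish. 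What your approach buys is independence from the representation theory (useful since the paper is otherwise using this proposition to \emph{derive} restriction multiplicities); what the paper's implicit route buys is brevity, at the cost of circularity if one wanted to use the character identity to prove the branching rule rather than the other way around.
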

The Littelwood-Richardson coefficient $c^{\nu}_{\omega_1,\omega_2}$ is the multiplicity of the $K$-module $V^{\omega_1} \boxtimes V^{\omega_2}$ in the restriction of $GL_N$-module $V^{\nu}$. 
Let $V^{\xi} \otimes V^{\otimes n}=\bigoplus_{\nu} m_{\nu} V^{\nu}$ as $GL_N$-modules, where $\nu \in P^+_{\geq 0}$ and $m_{\nu} \in \N$ is the multiplicity of $V^{\nu}$ in $V^{\xi}$. Then the $(\ft_0,\mu)$ invariant space
\begin{align}
	F_{n,p,\mu}(V^{\xi}) &= Hom_K(det^a \boxtimes det^b, Res_K^{GL_N} V^{\xi} \otimes V^{\otimes n})\\
	&= \bigoplus_{\nu} m_{\nu}Hom_K(det^a \boxtimes det^b, Res_K^{GL_N} V^{\nu}).
\end{align}
Since $\nu \in P^+_{\geq 0}$, to guarantee $Hom_K(det^a \boxtimes det^b, Res_K^{GL_N} V^{\nu})  \neq 0$ for each $\nu$ in (13), it suffices to consider $a,b \in \N$, otherwise $F_{n,p,\mu}(V^{\xi}) =(V^{\xi} \otimes V^{\otimes n})^{\ft_0, \mu} =0$. Our goal is to compute the $\nu$ such that the multiplicity of $det^a \boxtimes det^b$ in the $K$ restriction of the $GL_N$-module $V^{\nu}$ is nonzero. To do this, we need Okada's theorem \cite{O}.
	\begin{thm}
		For any two rectangular shapes $(a^p)$ and $(b^q)$, where $a$ and $b$ are nonnegative integers and $p \leq q$, then $$
		s_{a^p} \cdot s_{b^q} = \sum c^{\nu}_{(a^p) (b^q)}s_{\nu},
		$$
		where $c^{\nu}_{(a^p) (b^q)} = 1$ when $\nu$ satisfies the condition 
		\begin{align}
			&\nu_i + \nu_{p+q-i +1} = a+b, \quad i= 1, \cdots, p\\
			&\nu_{p} \geq max(a,b)\\
			&\nu_i = b, \quad i= p+1, \cdots, q
		\end{align}
		and $c^{\nu}_{(a^p) (b^q)} = 0$ otherwise.
	\end{thm}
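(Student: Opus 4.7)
The plan is to invoke the Littlewood--Richardson rule: by that rule, $c^{\nu}_{(a^p)(b^q)}$ equals the number of semistandard skew tableaux of shape $\nu/(a^p)$ with content $(b^q)$ whose reverse reading word is a lattice (Yamanouchi) word. I would show that this count is $1$ when $\nu$ satisfies (15)--(17) and $0$ otherwise, exploiting the rigidity imposed by a rectangular content.

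First I would derive two complementary families of constraints on the entries of such a tableau. The lattice-word condition propagates from the top: the first row of $\nu/(a^p)$ must consist entirely of $1$'s, and by induction any cell in row $i$ of $\nu/(a^p)$ can only hold a value in $\{1,\ldots,i\}$. The column-strict condition combined with the geometry of the skew shape gives matching lower bounds on the bottom: for a cell in row $r>p$ and column $j\le a$, the cells above it in column $j$ that lie in $\nu/(a^p)$ are exactly those in rows $p+1,\ldots,r-1$, so its entry is at least $r-p$. Together with the content constraint that each letter $k\in\{1,\ldots,q\}$ appears exactly $b$ times, these bounds force the positions of the larger letters into very narrow strips.

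Translating these local constraints into conditions on $\nu$, the letters $k\in\{p+1,\ldots,q\}$ are forced into exactly one row each of length $b$, yielding $\nu_i=b$ for $i=p+1,\ldots,q$ and hence (17). The requirement that row $p$ simultaneously accommodate the inner rectangle of width $a$ and a full $p$-th row of entries of length at least $b$ yields $\nu_p\ge\max(a,b)$, which is (16). Finally, the complementary condition (15) emerges by counting: for each $i\le p$, the ``excess'' cells of row $i$ (those in columns strictly larger than $a$) contribute certain letters $k<i$, and these must be matched by the ``deficit'' cells of row $p+q-i+1$ (shortfall from the full width $b$) in order for each letter's total to equal $b$; this balance forces $\nu_i-a=b-\nu_{p+q-i+1}$.

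The main obstacle is establishing the complementary condition (15) and simultaneously verifying uniqueness. The balance argument requires careful bookkeeping of how many copies of each letter $k$ can appear in each row given the upper and lower bounds above, and then showing that the only way to realize each total as exactly $b$ is via the pairing of rows $i$ and $p+q-i+1$. Once the three shape conditions are shown necessary, one constructs the unique LR tableau explicitly by placing the letter $k$ in a prescribed cascading pattern determined by $\nu$ (roughly, letter $k$ fills the rightmost $\nu_k-a$ cells of row $k$ for $k\le p$ and then completes row $p+q-k+1$), and checks that the resulting filling is semistandard with a lattice reverse reading word.
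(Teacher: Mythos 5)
The paper offers no proof of this statement: it is Okada's rectangle rule, imported verbatim from \cite{O} and used as a black box, so there is no internal argument to compare yours against. Your route --- compute $c^{\nu}_{(a^p)(b^q)}$ by the Littlewood--Richardson rule and exploit the rigidity of a rectangular content --- is the standard one and can be made to work. Your two families of bounds are correct, and the first in fact gives more than you use: for a cell $(r,j)$ of $\nu/(a^p)$ with $j>a$, the entire column above it lies in the skew shape, so combining column-strictness with the bound ``entries in row $r$ are at most $r$'' forces the entry to be exactly $r$. In particular the filling of rows $1,\dots,p$ is completely determined, row $i$ contributes exactly $\nu_i-a$ copies of the letter $i$, and one gets $\nu_{q+1}\le a$ and $\ell(\nu)\le p+q$ for free.

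Two steps are genuinely broken or missing. First, the claim that each letter $k\in\{p+1,\dots,q\}$ is ``forced into exactly one row'' is false: in columns $j\le a$ of a row $r>p$ the entry is only constrained to lie in $[r-p,\min(r,q)]$, so letter $k$ may be spread over rows $k,\dots,k+p$. For example with $p=1$, $q=2$, $a=1$, $b=2$ and $\nu=(2,2,1)$ (a shape that does occur in the product), the unique LR filling of $(2,2,1)/(1)$ with content $(2,2)$ puts the letter $2$ in both row $2$ and row $3$. The conclusion $\nu_i=b$ for $p<i\le q$ is still true, but you need a different argument --- e.g.\ $\nu\supseteq(b^q)$ gives $\nu_i\ge b$, while the symmetric count of LR tableaux of shape $\nu/(b^q)$ with content $(a^p)$ shows rows $p+1,\dots,q$ outside the inner rectangle would require entries exceeding $p$ and hence must be empty, giving $\nu_i\le b$. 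Second, as you yourself flag, the balance argument establishing condition (15) and the uniqueness of the filling are only described, not carried out; that bookkeeping is the actual content of the theorem, so as it stands the proposal is an outline with one incorrect intermediate step and the main step deferred.
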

\begin{cor}
   Now we have the following fact, the $(\ft_0,\mu)$ invariant space
    	\begin{align}
    	F_{n,p,\mu}(V^{\xi})&=(V^{\xi} \otimes V^{\otimes n})^{\ft_0, \mu}\\
    	&=\bigoplus_{\nu} Hom_{GL_N}(V^{\nu}, V^{\xi} \otimes V^{\otimes n}),
    \end{align}
where $\nu \in P^+_{\geq 0}$ runs through all partitions satisfying (16)-(18).
\end{cor}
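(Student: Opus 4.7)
The plan is to combine equation (15)---which already decomposes $F_{n,p,\mu}(V^{\xi})$ as a direct sum of $K$-Hom spaces indexed by those $\nu$ appearing in the $GL_N$-decomposition of $V^{\xi} \otimes V^{\otimes n}$---with Okada's theorem, which determines exactly which $\nu$ actually contribute. The key observation is that $det^a$ is the irreducible $GL_p$-module with highest weight the rectangular partition $(a^p)$, and similarly $det^b$ is the irreducible $GL_q$-module with highest weight $(b^q)$, so that the dimension of $Hom_K(det^a \boxtimes det^b, Res_K^{GL_N} V^{\nu})$ equals the Littlewood-Richardson coefficient $c^{\nu}_{(a^p)(b^q)}$---precisely the coefficient analyzed by Okada.

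First I would invoke the preceding proposition on Schur function restriction, together with the statement immediately after it identifying $c^{\nu}_{\omega_1,\omega_2}$ as the multiplicity of $V^{\omega_1} \boxtimes V^{\omega_2}$ in $Res_K^{GL_N} V^{\nu}$, to conclude
\[
\dim Hom_K(det^a \boxtimes det^b, Res_K^{GL_N} V^{\nu}) = c^{\nu}_{(a^p)(b^q)}.
\]
Substituting this identification into (15) yields
\[
F_{n,p,\mu}(V^{\xi}) = \bigoplus_{\nu \in P^+_{\geq 0}} m_{\nu}\, c^{\nu}_{(a^p)(b^q)},
\]
where $m_{\nu} = \dim Hom_{GL_N}(V^{\nu}, V^{\xi} \otimes V^{\otimes n})$ is the multiplicity of $V^{\nu}$ in $V^{\xi} \otimes V^{\otimes n}$.

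Then I would apply Okada's theorem: since $c^{\nu}_{(a^p)(b^q)} \in \{0,1\}$, taking value $1$ precisely when $\nu$ satisfies the rectangular conditions (16)--(18), the sum above collapses to those $\nu$ meeting (16)--(18), and replacing each surviving multiplicity $m_{\nu}$ by the corresponding space $Hom_{GL_N}(V^{\nu}, V^{\xi} \otimes V^{\otimes n})$ produces the stated identity. The one mild point to check is the implicit hypothesis $p \leq q$ in Okada's theorem; in the opposite case $p > q$ one swaps the roles of the two rectangles and applies the symmetric form of the theorem, leaving the corollary unchanged. I do not anticipate any substantive obstacle, since the corollary is essentially a formal consequence of Proposition 5.3, the Schur function restriction, and Okada's theorem; the only bookkeeping is the translation of $det^a$ and $det^b$ into their rectangular highest-weight descriptions so that Okada applies directly.
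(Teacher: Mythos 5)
Your proposal is correct and follows essentially the same route the paper takes: the paper derives the corollary by combining equation (15) with the identification of $\dim Hom_K(det^a \boxtimes det^b, Res_K^{GL_N} V^{\nu})$ as the Littlewood--Richardson coefficient $c^{\nu}_{(a^p)(b^q)}$ and then invoking Okada's theorem to restrict the sum to the $\nu$ satisfying (16)--(18). The only cosmetic issue is the line where you equate the space $F_{n,p,\mu}(V^{\xi})$ with a sum of numbers $m_{\nu}c^{\nu}_{(a^p)(b^q)}$; this should be read as a dimension count or rewritten as a direct sum of Hom spaces, but the argument itself is the paper's.
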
 
Moreover, by Pieri's rule, the vector space $Hom_{GL_N}(V^{\nu}, V^{\xi} \otimes V^{\otimes n})$ has a basis indexed by standard tableaux $T$ such that the shape of $T$ is $\nu / \xi$ and the dimension of this vector space
$$m_{\nu}=dim Hom_{GL_N}(V^{\nu}, V^{\xi} \otimes V^{\otimes n})$$
equals the number of standard tableaux $T$ with the shape of $T$ being $\nu / \xi$. If $m_{\nu} \neq 0$, then $\xi \subset \nu$ and $|\nu|=|\xi|+n$.
\begin{thm}
 The $(\ft_0,\mu)$ invariant space $F_{n,p,\mu}(V^{\xi})=(V^{\xi} \otimes V^{\otimes n})^{\ft_0,\mu}$ has a one to one correspondence to the set of standard tableaux $T$ such that the shape of $T$ is $\nu / \xi $ for $\nu \in P^+_{\geq 0}$ with $|\nu|= |\xi|+n$, $\nu$ runs through all the partitions satisfying (16)-(18) and $\xi \subset \nu$.
\end{thm}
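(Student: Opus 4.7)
The plan is to assemble the theorem from two ingredients already developed in the excerpt: the Hom-space identification given by the preceding corollary, and the iterated Pieri decomposition noted immediately after it. In essence, the theorem is a combinatorial repackaging of those two facts, so the work is mostly bookkeeping.

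First, I would invoke the preceding corollary to write
\[
F_{n,p,\mu}(V^{\xi}) \;\cong\; \bigoplus_{\nu} Hom_{GL_N}(V^{\nu},\, V^{\xi} \otimes V^{\otimes n}),
\]
where $\nu$ runs over partitions in $P^+_{\geq 0}$ satisfying Okada's conditions (16)--(18). This reduces the statement to identifying a distinguished basis of each multiplicity space $Hom_{GL_N}(V^{\nu}, V^{\xi} \otimes V^{\otimes n})$.

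Next, I would apply Pieri's rule iteratively. Since $V$ is the vector representation with character $e_1$, each tensor factor $V^{\lambda} \otimes V$ decomposes into irreducible summands $V^{\lambda'}$ with $\lambda'$ obtained from $\lambda$ by adding a single box. Iterating $n$ times shows that a choice of irreducible summand $V^{\nu}$ in $V^{\xi} \otimes V^{\otimes n}$ corresponds bijectively to a chain
\[
\xi \;=\; \lambda^{(0)} \;\subset\; \lambda^{(1)} \;\subset\; \cdots \;\subset\; \lambda^{(n)} \;=\; \nu
\]
in which each step adds exactly one box. Such a chain is precisely the data of a standard Young tableau of skew shape $\nu/\xi$, where the box added at step $k$ is labelled $k$. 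Consequently $m_{\nu} = \dim Hom_{GL_N}(V^{\nu}, V^{\xi}\otimes V^{\otimes n})$ equals the number of standard tableaux of shape $\nu/\xi$, and the conditions $\xi \subset \nu$ and $|\nu|=|\xi|+n$ are automatic consequences of the existence of any such chain.

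Finally, I would put the two pieces together. For each $\nu \in P^+_{\geq 0}$ satisfying Okada's conditions (16)--(18) together with $\xi \subset \nu$ and $|\nu|=|\xi|+n$, Pieri's rule supplies a basis of the corresponding summand indexed by standard tableaux of shape $\nu/\xi$; since the outer shape $\nu$ is recoverable from such a tableau, these bases for distinct $\nu$ are disjoint, and their disjoint union indexes a basis of $F_{n,p,\mu}(V^{\xi})$. This gives the desired one-to-one correspondence. The main obstacle is really only bookkeeping: one has to check that Okada's constraints on $\nu$ and the Pieri chain constraints $\xi \subset \nu$, $|\nu|=|\xi|+n$ combine into exactly the index set in the statement. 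Once that is verified, the bijection follows immediately from the two ingredients above.
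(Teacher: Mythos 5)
Your proposal is correct and follows essentially the same route as the paper: the paper states the theorem as an immediate consequence of Corollary 5.7 (the decomposition $F_{n,p,\mu}(V^{\xi})\cong\bigoplus_{\nu}Hom_{GL_N}(V^{\nu},V^{\xi}\otimes V^{\otimes n})$ over $\nu$ satisfying Okada's conditions) together with the remark that iterated Pieri identifies each multiplicity space with the span of standard tableaux of shape $\nu/\xi$, which forces $\xi\subset\nu$ and $|\nu|=|\xi|+n$. Your chain-of-partitions formulation of the Pieri step and the observation that the outer shape is recoverable from the tableau make the bookkeeping explicit, but the argument is the same.
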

	
	
	
	Let us consider the following example of $(\ft_0,\mu)$ invariant space.
	\begin{eg}
		Let $M = V^{\xi}$ be a $GL_3$-module, $\xi=2\epsilon_1+\epsilon_2$, $n= 3$, $p=1$ and $\mu=0$.\\
		
		Then $(a^p) = (2^1)$ and $(b^q) = (2^2)$.\\

		By Okada's theorem, we could compute the shapes $\nu$ such that the invariant space is nonzero.\\
		\begin{center}
			\begin{tikzpicture}[scale=0.5][shift={(1,0)}]
				\begin{scope}[shift={(6,15)}]
					\draw (1,2.8) node[red!50] {$(2^1)$};
					\draw[step=1] (0,1) grid (2,2);
				\end{scope}
				
				\draw (9,16.5) node{$\times$};
				
				\begin{scope}[shift={(10,15)}]
					\draw (1,2.8) node[blue!65] {$(2^2)$};
					\draw[step=1] (0,0) grid (2,2);
				\end{scope}

				\draw (13,16.5) node{$=$};
				
				\begin{scope}[shift={(14,15)}]
					\draw[thin,fill=blue!10]  (0,1) rectangle (2,2);
					\draw[thin,fill=blue!10]  (0,2) rectangle (2,3);
					\draw[thin,fill=red!10]  (0,1) rectangle (2,0);
					\draw[step=1] (0,0) grid (2,3);
				\end{scope}
				
				\draw (17,16.5) node{$+$};
				
				\begin{scope}[shift={(18,15)}]
					\draw [thin,fill=blue!10] (0,2) rectangle (2,3);
					\draw [thin,fill=blue!10] (0,1) rectangle (2,2);
					\draw[thin,fill=red!10]  (0,1) rectangle (1,0);
					\draw[thin,fill=red!10]  (2,3) rectangle (3,2);
					\draw[step=1] (0,1) grid (2,3);
					\draw (2,2) rectangle (3,3);
					\draw (0,0) rectangle (1,1);
				\end{scope}
				
				\draw (22,16.5) node{$+$};
				
				\begin{scope}[shift={(23,15)}]
					\draw[thin,fill=blue!10]  (0,2) rectangle (2,3);
					\draw[thin,fill=blue!10]  (0,1) rectangle (2,2);
					\draw[thin,fill=red!10]  (2,3) rectangle (4,2);
					\draw[step=1] (0,1) grid (2,3);
					\draw[step=1] (2,2) grid (4,3);
				\end{scope}
			\end{tikzpicture}
		\end{center}
		Then a basis of the invariant space could be indexed by standard tableaux on skew shapes obtained by the shapes above skewed by $\xi$.\\
		\begin{center}
			\begin{tikzpicture}[scale=0.4]
				\begin{scope}[shift={(5,10)}]
					\draw [thin,fill=gray!20] (0,2) rectangle (2,3);
					\draw [thin,fill=gray!20] (0,1) rectangle (1,2);
					\draw[step=1] (0,0) grid (2,3);
					\draw (0.5, 0.5) node[red] {$2$};
					\draw (1.5, 1.5) node[blue] {$1$};
					\draw (1.5, 0.5) node[black!75] {$3$};
				\end{scope}
				
				\begin{scope}[shift={(9,10)}]
					\draw [thin,fill=gray!20] (0,2) rectangle (2,3);
					\draw [thin,fill=gray!20] (0,1) rectangle (1,2);
					\draw[step=1] (0,0) grid (2,3);
					\draw (0.5, 0.5) node[blue] {$1$};
					\draw (1.5, 1.5) node[red] {$2$};
					\draw (1.5, 0.5) node[black!75] {$3$};
				\end{scope}
				\begin{scope}[shift={(5,5)}]
					\draw [thin,fill=gray!20] (0,2) rectangle (2,3);
					\draw [thin,fill=gray!20] (0,1) rectangle (1,2);
					\draw[step=1] (0,1) grid (2,3);
					\draw (2,2) rectangle (3,3);
					\draw (0,0) rectangle (1,1);
					\draw (0.5, 0.5) node[blue] {$1$};
					\draw (1.5, 1.5) node[red] {$2$};
					\draw (2.5, 2.5) node[black!75] {$3$};
				\end{scope}
				
				\begin{scope}[shift={(9,5)}]
					\draw [thin,fill=gray!20] (0,2) rectangle (2,3);
					\draw [thin,fill=gray!20] (0,1) rectangle (1,2);
					\draw[step=1] (0,1) grid (2,3);
					\draw (2,2) rectangle (3,3);
					\draw (0,0) rectangle (1,1);
					\draw (0.5, 0.5) node[blue] {$1$};
					\draw (1.5, 1.5) node[black!75] {$3$};
					\draw (2.5, 2.5) node[red] {$2$};
				\end{scope}
				
				\begin{scope}[shift={(13,5)}]
					\draw [thin,fill=gray!20] (0,2) rectangle (2,3);
					\draw [thin,fill=gray!20] (0,1) rectangle (1,2);
					\draw[step=1] (0,1) grid (2,3);
					\draw (2,2) rectangle (3,3);
					\draw (0,0) rectangle (1,1);
					\draw (0.5, 0.5) node[red] {$2$};
					\draw (1.5, 1.5) node[blue] {$1$};
					\draw (2.5, 2.5) node[black!75] {$3$};
				\end{scope}
				
				\begin{scope}[shift={(17,5)}]
					\draw [thin,fill=gray!20] (0,2) rectangle (2,3);
					\draw [thin,fill=gray!20] (0,1) rectangle (1,2);
					\draw[step=1] (0,1) grid (2,3);
					\draw (2,2) rectangle (3,3);
					\draw (0,0) rectangle (1,1);
					\draw (0.5, 0.5) node[red] {$2$};
					\draw (1.5, 1.5) node[black!75] {$3$};
					\draw (2.5, 2.5) node[blue] {$1$};
				\end{scope}
				
				\begin{scope}[shift={(21,5)}]
					\draw [thin,fill=gray!20] (0,2) rectangle (2,3);
					\draw [thin,fill=gray!20] (0,1) rectangle (1,2);
					\draw[step=1] (0,1) grid (2,3);
					\draw (2,2) rectangle (3,3);
					\draw (0,0) rectangle (1,1);
					\draw (0.5, 0.5) node[black!75] {$3$};
					\draw (1.5, 1.5) node[blue] {$1$};
					\draw (2.5, 2.5) node[red] {$2$};
				\end{scope}
				
				\begin{scope}[shift={(25,5)}]
					\draw [thin,fill=gray!20] (0,2) rectangle (2,3);
					\draw [thin,fill=gray!20] (0,1) rectangle (1,2);
					\draw[step=1] (0,1) grid (2,3);
					\draw (2,2) rectangle (3,3);
					\draw (0,0) rectangle (1,1);
					\draw (0.5, 0.5) node[black!75] {$3$};
					\draw (1.5, 1.5) node[red] {$2$};
					\draw (2.5, 2.5) node[blue] {$1$};
				\end{scope}
				\begin{scope}[shift={(5,0)}]
					\draw [thin,fill=gray!20] (0,2) rectangle (2,3);
					\draw [thin,fill=gray!20] (0,1) rectangle (1,2);
					\draw[step=1] (0,1) grid (2,3);
					\draw[step=1] (2,2) grid (4,3);
					\draw (1.5, 1.5) node[blue] {$1$};
					\draw (2.5, 2.5) node[red] {$2$};
					\draw (3.5, 2.5) node[black!75] {$3$};
				\end{scope}
				\begin{scope}[shift={(10,0)}]
					\draw [thin,fill=gray!20] (0,2) rectangle (2,3);
					\draw [thin,fill=gray!20] (0,1) rectangle (1,2);
					\draw[step=1] (0,1) grid (2,3);
					\draw[step=1] (2,2) grid (4,3);
					\draw (1.5, 1.5) node[red] {$2$};
					\draw (2.5, 2.5) node[blue] {$1$};
					\draw (3.5, 2.5) node[black!75] {$3$};
				\end{scope}
				\begin{scope}[shift={(15,0)}]
					\draw [thin,fill=gray!20] (0,2) rectangle (2,3);
					\draw [thin,fill=gray!20] (0,1) rectangle (1,2);
					\draw[step=1] (0,1) grid (2,3);
					\draw[step=1] (2,2) grid (4,3);
					\draw (1.5, 1.5) node[black!75] {$3$};
					\draw (2.5, 2.5) node[blue] {$1$};
					\draw (3.5, 2.5) node[red] {$2$};
				\end{scope}
				
			\end{tikzpicture}
		\end{center}
		
		In this example, we obtain an invariant space of $11$ dimensions.
		
	\end{eg}
	\subsection{One skew shape}
	In this subsection, we associate a skew shape $\varphi_{n,p,\mu}^{\xi}$ to the image $F_{n,p,\mu}(V^{\xi})$ under Etingof-Freund-Ma functor. Let $\xi=\sum_{i=1}^N \xi_i \epsilon_i \in P^+_{\geq 0}$. The corresponding Young diagram $\xi = (\xi_1, \cdots, \xi_N)$. The first $q$ rows of $\xi$ forms a Young diagram denoted by $\xi^{(1)}$ and the last $p$ rows of $\xi$ forms a Young diagram denoted by $\xi^{(2)}$. The parameter $\mu$ gives a pair of rectangles $(a^p)$ and $(b^q)$ denoting the $K$-module $det^a \boxtimes det^b$, where
	$a=\mu q+\frac{|\xi|+n}{N}$
	and $b=-\mu p + \frac{|\xi|+n}{N}$.\\
	Suppose $p \leq q$. Placing the northwestern corner the rectangle $(a^p)$ next to the northeastern corner of the rectangle $(b^q)$ forms a Young diagram $\beta$. Delete the Young diagram $\xi^{(1)}$ from northwestern corner of $\beta$. Let \rotatebox[origin=c]{180}{$\xi^{(2)}$} denote the skew shape obtained by rotating $\xi^{(2)}$ by $\pi$. Delete the rotated $\xi^{(2)}$ from the southeastern corner of $\beta$, i.e. the skew shape $\varphi^{\xi}_{n,p,\mu}$ is defined by $\varphi^{\xi}_{n,p,\mu} = \nu / \xi^{(1)}$, where $\nu_i= a+b -\xi_{N-i+1}$ for $i = 1,\cdots, p$ and $\nu_i = b$ for $i=p+1, \cdots, q$.\\ 
	
	\begin{center}
		\begin{tikzpicture}
			\begin{scope}[scale=0.4, shift={(-10,0)}]
				\draw[step=1, dotted] (-2,10) grid (2,7);
				\draw[step=1, dotted] (5,10) grid  (8,6);
				\draw [red,thick] (-2,10) rectangle (2,7);
				\draw [red,thick] (5,10) rectangle (8,6);
				\draw [scale=0.5](0, 10) node {\tiny{$(a^p)$}};
				\draw [scale=0.5](13,10) node {\tiny{$(b^q)$}};
				\draw [blue] (0,3)--(0,-3)--(1,-3)--(1,-2)--(2,-2)--(2,1)--(5,1)--(5,3)--(0,3);
				\draw [draw=none,fill=blue!10] (0,3) rectangle (2,-1);
				\draw [draw=none,fill=gray!20] (0,-1) rectangle (1,-3);
				\draw [draw=none, fill=gray!20] (1,-1) rectangle (2,-2);
				\draw [draw=none,fill=blue!10] (2,3) rectangle (5,1);
				\draw [dotted, red](-1,-1)--(4,-1);
				\draw (2.5,2) node {\tiny{$\xi^{(1)}$}};
				\draw (0.8,-1.8) node {\tiny{$\xi^{(2)}$}};
				\draw [scale=0.5] (5,-8) node {\tiny{$\xi = \sum_{i=1}^N \xi_i \epsilon_i \in P^+_{\geq 0}$}};
				\draw [->] (-0.5,0.5)--(-0.5,-1);
				\draw [->] (-0.5,1.5)--(-0.5,3);
				\draw (-0.5, 1) node {\tiny{$q$}};
			\end{scope}
			\begin{scope}[shift={(2,0)}]
				\draw (-1,2) node {$\longmapsto$};
			\end{scope}
			\begin{scope}[scale=0.4, shift={(9,4)}]
				\draw [blue] (0,3)--(0,-3)--(1,-3)--(1,-2)--(2,-2)--(2,1)--(5,1)--(5,3)--(0,3);
				\draw [draw=none,fill=blue!10] (0,3) rectangle (2,-1);
				\draw [draw=none,fill=gray!20] (0,-1) rectangle (1,-3);
				\draw [draw=none, fill=gray!20] (1,-1) rectangle (2,-2);
				\draw [draw=none,fill=blue!10] (2,3) rectangle (5,1);
				\draw [draw=none,fill=gray!20] (5,1) rectangle (7,0);
				\draw [draw=none,fill=gray!20] (6,2) rectangle (7,1);
				\draw[dotted] (0,3) grid (3, -1);
				\draw[dotted] (3,3) grid (7,0);
				\draw [red,thick] (0,3) rectangle (3,-1);
				\draw [red,thick] (3,3) rectangle (7,0);
				\draw (2.5,2) node {\tiny{$\xi^{(1)}$}};
				\draw (6.3,0.8) node {\tiny{\rotatebox{180}{$\xi^{(2)}$}}};
				\draw [->] (1,-1.5)--(0,-1.5);
				\draw [->] (2,-1.5)--(3,-1.5);
				\draw [->] (4,-0.5)--(3,-0.5);
				\draw [->] (6,-0.5)--(7,-0.5);
				\draw [->] (0.5,3.5)--(0,3.5);
				\draw [->] (3.5,3.5)--(4,3.5);
				\draw (2,3.5) node {\tiny{$max(a,b)$}};
				\draw (1.5,-1.5) node {\tiny{$b$}};
				\draw (5,-0.5) node {\tiny{$a$}};
				\draw [->] (-0.5,2)--(-0.5,3);
				\draw [->] (-0.5,0)--(-0.5,-1);
				\draw (-0.5,1) node {\tiny{$q$}};
				\draw [->] (7.5,2)--(7.5,3);
				\draw (7.5, 1.5) node {\tiny{$p$}};
				\draw [->] (7.5,1)--(7.5,0);
				\draw [dotted, purple] (4,5)--(4,-2);
				\draw (3.5, -3.5) node {\tiny{$\varphi_{n,p,\mu}^{\xi}$}};
			\end{scope}
		\end{tikzpicture}
	\end{center}
	Let $\varphi=\varphi_{n,p,\mu}^{\xi}$. If a cell $(i,j)$ of the skew shape $\varphi$ satisfy $(i+1,j) \notin \varphi$ and $(i,j+1) \notin \varphi$, then call $(i,j)$ a corner of $\varphi$. Define $\gamma$-move on a skew shape $\varphi$: delete a corner $(i,j) \in \varphi$ such that $j> max(a,b)$ and $1 \leq i \leq p$, and add the cell $(p+q-i+1,a+b-j+1)$. Denote the $\gamma$-move by $\varphi \to \varphi'$ where $\varphi'= \varphi \setminus (i,j) \cup (p+q-i+1,a+b-j+1)$. Note that for a given $\varphi$, the $\gamma$-move stops when there is no cell $(i,j)$ such that $j > max(a,b)$. Given the skew shape $\varphi^{\xi}_{n,p,\mu}$, a collection $D(\varphi^{\xi}_{n,p,\mu})$ of skew shapes consists of $\varphi^{\xi}_{n,p,\mu}$ and all the skew shapes obtained by applying $\gamma$-moves on $\varphi^{\xi}_{n,p,\mu}$ for finitely many times. The shape $\varphi^{\xi}_{n,p,\mu}$ is called the minimal shape of the representation $F_{n,p,\mu}(V^{\xi})$.
	\begin{center}
		\begin{tikzpicture}
			\begin{scope}[blue, scale=0.2,shift={(0,17.5)}]
				\draw [dotted, purple] (2,3)--(2,-3);
				\draw (0,0) grid (1,-2);
				\draw (1,0) grid (2,-1);
				\draw (2,0) rectangle (3,-1);
				\draw (3,2) rectangle (4,1);
				\draw (4,2) rectangle (5,1);
				\draw (3,1) rectangle (4,0);
				\draw (2.5,-3.2) node {\tiny{$\varphi^{\xi}_{n,p,\mu}$}};
			\end{scope}
			\begin{scope}[scale=0.2,shift={(-25,5)}]
				\draw [dotted, purple] (2,3)--(2,-3);
				\draw (0,0) grid (1,-2);
				\draw (1,0) grid (2,-1);
				\draw [draw=none, fill=gray!20] (2,0) rectangle (3,-1);
				\draw (0,-2) rectangle (1,-3);
				\draw (3,2) rectangle (4,1);
				\draw (4,2) rectangle (5,1);
				\draw (3,1) rectangle (4,0);
			\end{scope}
			\begin{scope}[scale=0.2,shift={(0,5)}]
				\draw [dotted, purple] (2,3)--(2,-3);
				\draw (0,0) grid (1,-2);
				\draw (1,0) grid (2,-1);
				\draw (2,0) rectangle (3,-1);
				\draw (3,2) rectangle (4,1);
				\draw (4,2) rectangle (5,1);
				\draw [draw=none, fill=gray!20] (3,1) rectangle (4,0);
				\draw (-1,-3) rectangle (0,-4);
			\end{scope}
			\begin{scope}[scale=0.2,shift={(25,5)}]
				\draw [dotted, purple] (2,3)--(2,-3);
				\draw (0,0) grid (1,-2);
				\draw (1,0) grid (2,-1);
				\draw (2,0) rectangle (3,-1);
				\draw (3,2) rectangle (4,1);
				\draw [draw=none, fill=gray!20] (4,2) rectangle (5,1);
				\draw (-2,-4) rectangle (-1,-5);
				\draw (3,1) rectangle (4,0);
			\end{scope}
			
			\begin{scope}[scale=0.2,shift={(-25,-8)}]
				\draw [dotted, purple] (2,3)--(2,-3);
				\draw (0,0) grid (1,-2);
				\draw (1,0) grid (2,-1);
				\draw [draw=none, fill=gray!20](2,0) rectangle (3,-1);
				\draw (0,-2) rectangle (1,-3);
				\draw (3,2) rectangle (4,1);
				\draw (4,2) rectangle (5,1);
				\draw [draw=none, fill=gray!20] (3,1) rectangle (4,0);
				\draw (-1,-3) rectangle (0,-4);
			\end{scope}
			\begin{scope}[scale=0.2,shift={(25,-8)}]
				\draw [dotted, purple] (2,3)--(2,-3);
				\draw (0,0) grid (1,-2);
				\draw (1,0) grid (2,-1);
				\draw (2,0) rectangle (3,-1);
				\draw (3,2) rectangle (4,1);
				\draw [draw=none, fill=gray!20](4,2) rectangle (5,1);
				\draw (-2,-4) rectangle (-1,-5);
				\draw [draw=none, fill=gray!20] (3,1) rectangle (4,0);
				\draw (-1,-3) rectangle (0,-4);
			\end{scope}
			\begin{scope}[scale=0.2,shift={(0,-8)}]
				\draw [dotted, purple] (2,3)--(2,-3);
				\draw (0,0) grid (1,-2);
				\draw (1,0) grid (2,-1);
				\draw [draw=none, fill=gray!20] (2,0) rectangle (3,-1);
				\draw (0,-2) rectangle (1,-3);
				\draw (3,2) rectangle (4,1);
				\draw [draw=none, fill=gray!20] (4,2) rectangle (5,1);
				\draw (-2,-4) rectangle (-1,-5);
				\draw (3,1) rectangle (4,0);
			\end{scope}
			
			\begin{scope}[scale=0.2,shift={(-13,-20)}]
				\draw [dotted, purple] (2,3)--(2,-3);
				\draw (0,0) grid (1,-2);
				\draw (1,0) grid (2,-1);
				\draw [draw=none, fill=gray!20] (2,0) rectangle (3,-1);
				\draw (0,-2) rectangle (1,-3);
				\draw (3,2) rectangle (4,1);
				\draw [draw=none, fill=gray!20] (4,2) rectangle (5,1);
				\draw (-2,-4) rectangle (-1,-5);
				\draw [draw=none, fill=gray!20] (3,1) rectangle (4,0);
				\draw (-1,-3) rectangle (0,-4);
			\end{scope}
			
			\begin{scope}[scale=0.2,shift={(13,-20)}]
				\draw [dotted, purple] (2,3)--(2,-3);
				\draw (0,0) grid (1,-2);
				\draw (1,0) grid (2,-1);
				\draw (2,0) rectangle (3,-1);
				\draw [draw=none, fill=gray!20] (3,2) rectangle (4,1);
				\draw (-2,-3) rectangle (-1,-4);
				\draw [draw=none, fill=gray!20] (4,2) rectangle (5,1);
				\draw (-2,-4) rectangle (-1,-5);
				\draw [draw=none, fill=gray!20] (3,1) rectangle (4,0);
				\draw (-1,-3) rectangle (0,-4);
			\end{scope}
			\begin{scope}[scale=0.2,shift={(0,-32)}]
				\draw [dotted, purple] (2,3)--(2,-3);
				\draw (0,0) grid (1,-2);
				\draw (1,0) grid (2,-1);
				\draw [draw=none, fill=gray!20] (2,0) rectangle (3,-1);
				\draw (0,-2) rectangle (1,-3);
				\draw  [draw=none, fill=gray!20] (3,2) rectangle (4,1);
				\draw (-2,-3) rectangle (-1,-4);
				\draw [draw=none, fill=gray!20] (4,2) rectangle (5,1);
				\draw (-2,-4) rectangle (-1,-5);
				\draw  [draw=none, fill=gray!20] (3,1) rectangle (4,0);
				\draw (-1,-3) rectangle (0,-4);
			\end{scope}
			\draw [scale=0.2] [->] (-3,14)--(-20,8); 
			\draw [scale=0.2] [->] (2,12)--(2,9); 
			\draw [scale=0.2] [->] (6,14)--(23,8); 
			\draw [scale=0.2] [->] (-24,1)--(-24,-5); 
			\draw [scale=0.2] [->] (-20,1)--(-3,-5); 
			\draw [scale=0.2] [->] (6,1)--(23,-5); 
			\draw [scale=0.2] [->] (-3,1)--(-20,-5); 
			\draw [scale=0.2] [->] (27,1)--(27,-5);
			\draw [scale=0.2] [->] (22,1)--(6,-5); 
			\draw [scale=0.2] [->] (-23,-12)--(-15,-18); 
			\draw [scale=0.2] [->] (-3,-12)--(-10,-17); 
			\draw [scale=0.2] [->] (22,-11)--(-7,-20); 
			\draw [scale=0.2] [->] (22,-13)--(16,-17); 
			\draw [scale=0.2] [->] (-10,-23)--(-3,-30);
			\draw [scale=0.2] [->] (11,-26)--(6,-30); 
		\end{tikzpicture}
	\end{center}
	%
	%
	Continue Example 5.9, the representation  $F_{3,1,0}(V^{\Yboxdim{4pt}\young(\quad \quad,\quad)})$ is index by the following skew shape $\varphi$.
	\begin{center}
		\begin{tikzpicture}[scale=0.5]
			\draw [dotted] (0,2) grid (2,0);
			\draw [dotted] (2,2) grid (4,1);
			\draw [red] (0,2) rectangle (2,0);
			\draw [red] (2,2) rectangle (4,1);
			\draw [draw=none, fill=blue!10] (0,2) rectangle (2,1);
			\draw [draw=none, fill=blue!10] (0,1) rectangle (1,0);
			\draw [dotted, purple] (2,3)--(2,-1);
		\end{tikzpicture}
	\end{center}
	The collection $D(\varphi)$ of skew shapes is obtained as follows:\\
	\begin{center}
		\begin{tikzpicture}[scale=0.4]
			\begin{scope}[shift={(-12,0)}]
				\draw (1,1) rectangle (2,0);
				\draw (2,2) rectangle (3,1);
				\draw (3,2) rectangle (4,1);
			\end{scope}
			\begin{scope}[shift={(0,0)}]
				\draw (1,1) rectangle (2,0);
				\draw (2,2) rectangle (3,1);
				\draw [draw=none, fill=gray!10](3,2) rectangle (4,1);
				\draw (0,0) rectangle (1,-1);
			\end{scope}
			\begin{scope}[shift={(12,0)}]
				\draw (1,1) rectangle (2,0);
				\draw [draw=none, fill=gray!10](2,2) rectangle (3,1);
				\draw (2,0) rectangle (1,-1);
				\draw [draw=none, fill=gray!10](3,2) rectangle (4,1);
				\draw (1,0) rectangle (0,-1);
			\end{scope}
			\draw [->] (-7,1)--(-2,1);
			\draw [->] (5,1)--(10,1);
		\end{tikzpicture}
	\end{center}
	\subsection{Skew shapes and standard tableaux}
	For the ease of description, let us use the following definition of skew shapes and standard tableaux. Given a partition $\xi=(\xi_1, \cdots, \xi_l)$, the corresponding Young diagram $\xi$ is a subset of $\Z^2$, consisting of $(i,j)$ such that $1 \leq i \leq l$ and $1 \leq j \leq \xi_i$. Let $\nu=(\nu_1, \cdots, \nu_l)$ and $\xi=(\xi_1, \cdots, \xi_l)$ such that $\nu_i \geq \xi_i$ for $1 \leq i \leq l$, then for the corresponding Young diagrams $\xi \subset \nu$ holds. A skew shape $\nu / \xi$ is the subset $\nu \setminus \xi$ of $\Z^2$. For example, let $\nu=(7,6,5,3,2,1)$ and $\xi =(5,5,2,2,2,1)$, then Young diagrams $\nu$ and $\xi$ and the skew shape $\nu / \xi$ are the following subsets of $\Z^2$.
	$$\nu=\{(i,j)|1 \leq i \leq 6, 1 \leq j \leq \nu_i\},$$
	$$\xi=\{(i,j)|1 \leq i \leq 6, 1 \leq j \leq \xi_i\}$$
	and
	$$\nu /\xi = \{(1,6),(1,7),(2,6),(3,3),(3,4),(3,5),(4,3)\}.$$
	
	Define a tableau $T$ on $n$-indices $\{1, \cdots, n\}$ to be an injective map $T$
	\begin{align*}
		T: \{1, \cdots, n\} &\to \Z^2\\
		k &\mapsto (\ci(k), \cj(k))
	\end{align*}
	
	where $\ci$ and $\cj$ being two maps from $\{1, \cdots,n\}$ to $\Z$ and the image $Im(T)$ of $T$ being a skew shape. The image $Im(T)$ is also called the shape of the tableaux $T$. Let $cont_T$ be a map
	\begin{align*}
		cont_T: \{1, \cdots, n\} &\to \Z\\
		& k \mapsto \cj(k)-\ci(k),
	\end{align*}
	call $cont_T(k)$ is the content of $k$ in the tableau $T$. If $T^{-1}(i + 1, j) > T^{-1} (i, j)$ and $T^{-1}(i,j + 1) > T^{-1}(i, j)$ hold for each cell $(i, j) \in Im(T)$, then call $T$ is a standard tableau.\\
	Let $$\tab = \{T| T \text{ is a standard tableau and } Im(T) \in D(\varphi^{\xi}_{n,p,\mu})\}.$$
	The invariant space $F_{n,p,\mu}(V^{\xi})=(V^{\xi} \otimes V^{\otimes n})^{\ft_0, \mu}$ has a basis indexed by a collection of standard tableaux on the skew shapes in $D(\varphi^{\xi}_{n,p,\mu})$, i.e. all the tableaux in $\tab$. Let $v_T$ denote the basis vector indexed by $T \in \tab$. Then as a vector space
	\begin{align*}
		F_{n,p,\mu}(V^{\xi})&=(V^{\xi} \otimes V^{\otimes n})^{\ft_0, \mu}\\
		&= span_{\C}\{v_T|T \in \tab\}.
	\end{align*}
	
	\section{$\Y$- semisimplicity}
	\subsection{Action of $\Y$}
	In this subsection let us computer the $\Y$-actions on the invariant space $F_{n,p,\mu}(V^{\xi})=(V^{\xi} \otimes V^{\otimes n})^{\ft_0, \mu}$. In \cite{J}, Jordan computed the action of $y_1$ and used the fact that Etingof-Freund-Ma functor is a trigonometric degeneration of the quantum case. Now let us review the computation and conduct it in the degenerate case.
	Let us use the following notations in \cite{EFM} for sums
	\begin{align}
		&\sum_{s,t}= \sum_{s=1}^N \sum_{p = 1}^N\\
		&\sum_{s|t} = \sum_{s = 1}^p \sum_{t = p + 1}^N + \sum_{t= 1}^p \sum_{s = p + 1}^N\\
		&\sum_{st} = \sum_{s= 1}^p \sum_{t=1}^p + \sum_{s = p +1}^N \sum_{t = p+1}^N
	\end{align}
	It is easy to observe that the sum of $(22)$ and $(23)$ equals $(21)$.\\
	Review the definition of $y_1$ on the $(\ft_0,\mu)$-invariant space $F_{n,p,\mu}(V^{\xi})=(V^{\xi} \otimes V^{\otimes n})^{\ft_0,\mu}$ in \cite{EFM},
	\begin{align*}
		y_1 = -\sum_{s|t} (E_s^t)_0 \otimes (E_t^s)_1 + \dfrac{p-q-\mu N}{2}\gamma_1 + \dfrac{1}{2}\sum_{l>1}s_{1,l} + \dfrac{1}{2}\sum_{l \neq 1}s_{1,l} \gamma_1 \gamma_l.
	\end{align*}
	Compute the last two terms of $y_1$, we have 
	\begin{align*}
		&\dfrac{1}{2}\sum_{l> 1}s_{1,l} + \dfrac{1}{2}\sum_{l \neq 1}s_{1,l} \gamma_1 \gamma_l\\
		=& \dfrac{1}{2}\sum_{l>1} \sum_{s,t}(E_s^t)_1 \otimes (E_t^s)_l + \dfrac{1}{2}\sum_{l > 1}\sum_{s,t}(E_s^t J)_1 \otimes (E_t^s J)_l\\
		=& \sum_{l>1}\sum_{st}(E_s^t)_1 \otimes (E_t^s)_l\\
		=& \sum_{st}(E_s^t)_1 (\sum_{l>1} 1 \otimes (E_t^s)_l)\\
		=& \sum_{st} (E_s^t)_1 (\Delta^{(n)}(E_t^s) - (E_t^s)_0 - (E_t^s)_1)\\
	\end{align*}
	The last step follows the fact that $\sum_{l>1} 1 \otimes (E_t^s)_l= \Delta^{(n)}(E_t^s)-(E_t^s)_0-(E_t^s)_1$, where $\Delta$ denotes the comultiplication of Lie algebra $\gl_N$ and $\Delta^{(n)}(E_t^s)=\sum_{l=0}^n (E_t^s)_l$.\\\\
	Applying the fact that $y_1$ preserves on the $(\ft_0, \mu)$-invariant space $F_{n,p,\mu}(V^{\xi})=(V^{\xi} \otimes V^{\otimes n})^{\ft_0,\mu}$, the computation of the last two terms of $y_1$ above continues as follows.
	\begin{align*}
		& \sum_{st} (E_s^t)_1 (\Delta^{(n)}(E_t^s) - (E_t^s)_0 - (E_t^s)_1)\\
		=& \sum_{s= 1}^p (\mu q + \dfrac{|\xi| +n}{N})(E_s^s)_1 + \sum_{s = p +1}^N (-\mu p + \dfrac{|\xi| + n}{N})(E_s^s)_1 \\
		& - \sum_{s =1}^p p (E_s^s)_1 - \sum_{s = p+1}^N q(E_s^s)_1 - \sum_{st}(E_s^t)_1 \otimes (E_t^s)_0\\
		=& (\mu q -p +\dfrac{|\xi| + n}{N}) \sum_{s =1}^p(E_s^s)_1+ (-\mu p -q + \dfrac{|\xi|+n}{N}) \sum_{s=p+1}^N (E_s^s)_1 \\
		&- \sum_{st}(E_t^s)_0 \otimes (E_s^t)_1
	\end{align*}
	Combining other terms in the definition of $y_1$,
	\begin{align*}
		y_1 = &-\sum_{s,t}(E_s^t)_0 \otimes (E_t^s)_1 + \dfrac{p-q-\mu N}{2} \gamma_1\\
		&+ (\mu q -p +\dfrac{|\xi| + n}{N}) \sum_{s =1}^p(E_s^s)_1+ (-\mu p -q + \dfrac{|\xi|+n}{N}) \sum_{s=p+1}^N (E_s^s)_1 \\
		=& -\sum_{s,t}(E_s^t)_0 \otimes (E_t^s)_1 + (\mu q -p + \dfrac{|\xi|+n}{N} + \dfrac{p-q-\mu N}{2})\sum_{s=1}^p (E_s^s)_1\\
		&+(-\mu p - q + \dfrac{|\xi| +n}{N} - \dfrac{p-q-\mu N}{2}) \sum_{s=p+1}^N (E_s^s)_1\\
		=& -\sum_{s,t}(E_s^t)_0 \otimes (E_t^s)_1 + (\dfrac{|\xi|+n}{N} + \dfrac{\mu q - \mu p}{2} - \dfrac{N}{2})\sum_{s =1}^N (E_s^s)_1\\
		=& -\sum_{s,t}(E_s^t)_0 \otimes (E_t^s)_1 + \dfrac{|\xi|+n}{N} + \dfrac{\mu q - \mu p}{2} - \dfrac{N}{2},
	\end{align*}
\begin{rmk}
Since the action in \cite{JM} was define on $F_{n,p,\mu}(M)$ for $M$ is a $\mathcal{D}$-module, there is a difference between equation (12) and the above result. If we input a $\mathcal{D}$-module instead of $V^{\xi}$, the above result will be the same with equation (12).
\end{rmk}
	Moreover, the action of $y_k$ for $k>1$ is computed by induction.
	\begin{prop}
		The action of $y_k$, for $k =1, \cdots, n$, on the invariant space $(V^{\xi} \otimes V^{\otimes n})^{\ft_0, \mu}$ is computed by
		$$y_k = -\sum_{s,t} (\Delta^{(k-1)}E_s^t)_{(0,k)} \otimes (E_t^s)_k + \dfrac{|\xi|+n}{N} + \dfrac{\mu q - \mu p}{2} - \dfrac{N}{2},$$
		where $(E_s^t)_{(0,k)}$ denotes the tensor product $(V^{\xi} \otimes V^{\otimes (k-1)})$ and hence $\Delta^{(k-1)}E_s^t$ acting on $(E_s^t)_{(0,k)}$.
	\end{prop}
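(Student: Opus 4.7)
The plan is induction on $k$. The base case $k=1$ is exactly the extended computation immediately preceding the proposition, which yields $y_1 = -\sum_{s,t}(E_s^t)_0 \otimes (E_t^s)_1 + C$, where $C := \dfrac{|\xi|+n}{N} + \dfrac{\mu q - \mu p}{2} - \dfrac{N}{2}$. Since $(\Delta^{(0)}E_s^t)_{(0,1)} = (E_s^t)_0$, this matches the claimed formula at $k=1$.

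For the inductive step, the key input is the defining Hecke relation $s_{k-1}y_{k-1} - y_k s_{k-1} = \kappa_1 = 1$. Multiplying on the right by $s_{k-1}$ and using $s_{k-1}^2 = 1$ yields the presentation $y_k = s_{k-1}y_{k-1}s_{k-1} - s_{k-1}$ already cited at the end of Section~3. Assuming inductively that
\[ y_{k-1} = -\sum_{s,t}\sum_{l=0}^{k-2}(E_s^t)_l (E_t^s)_{k-1} + C, \]
I would conjugate term by term by $s_{k-1}$. Since $s_{k-1}$ acts as the flip on $V_{k-1}\otimes V_k$ and as the identity on every other tensor factor, each $(E_s^t)_l$ with $l \leq k-2$ commutes with $s_{k-1}$, the operator $(E_t^s)_{k-1}$ is transported to $(E_t^s)_k$, and the scalar $C$ is untouched. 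This produces
\[ s_{k-1}y_{k-1}s_{k-1} = -\sum_{s,t}\sum_{l=0}^{k-2}(E_s^t)_l (E_t^s)_{k} + C. \]

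To finish, I would use the standard matrix-unit expression for the flip operator, $s_{k-1} = \sum_{s,t}(E_s^t)_{k-1}(E_t^s)_k$ as an operator on $V_{k-1}\otimes V_k$. Subtracting this from the previous display supplies exactly the missing $l=k-1$ summand, giving
\[ y_k = -\sum_{s,t}\sum_{l=0}^{k-1}(E_s^t)_l (E_t^s)_{k} + C = -\sum_{s,t}(\Delta^{(k-1)}E_s^t)_{(0,k)} \otimes (E_t^s)_k + C, \]
which is the desired formula. The main obstacle is purely bookkeeping: one must verify carefully that conjugation by $s_{k-1}$ really implements the label transposition $k-1 \leftrightarrow k$ on matrix units while leaving all other factors pointwise fixed, and that the Hecke relation, which is an identity in the abstract algebra $H_n(1,p-q-\mu N)$, transports to an operator identity on the $(\ft_0,\mu)$-invariant subspace. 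Both are routine since $s_{k-1}$ preserves $F_{n,p,\mu}(V^{\xi})$ and the action on this subspace was precisely the one fixed in Section~3.
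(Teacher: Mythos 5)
Your proposal is correct and follows essentially the same route as the paper: the base case is the preceding computation of $y_1$, the inductive step uses $y_k = s_{k-1}y_{k-1}s_{k-1} - s_{k-1}$, conjugation by the flip relabels $(E_t^s)_{k-1}$ to $(E_t^s)_k$ while fixing the factors $0,\dots,k-2$, and the subtracted $s_{k-1} = \sum_{s,t}(E_s^t)_{k-1}(E_t^s)_k$ supplies exactly the missing $l=k-1$ term of the coproduct. The only cosmetic difference is that the paper carries out the conjugation by explicit matrix-unit multiplication rather than by invoking the label-transposition property of the flip directly.
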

	\begin{proof}
		We verified the action of $y_1$ above. Suppose the statement is true for $y_i$, $i < k$. Let compute the action of $y_k$. By the relation $s_{k-1}y_{k-1} - y_ks_{k-1} = \kappa_1=1$ and the inductive hypothesis, it follows
		\begin{align*}
			y_k &= s_{k-1}y_{k-1}s_{k-1} - s_{k-1}\\
			&= -\sum_{s,t,j,l} (\Delta^{(k-2)}E_s^t)_{(0,k-1)} \otimes (E_l^t  E_t^s E_s^j)_{k-1} \otimes (E_t^l E_j^s)_k \\
			&- \sum_{s,t}(E_s^t)_{k-1} \otimes (E_t^s)_k +
			\dfrac{|\xi|+n}{N} + \dfrac{\mu q - \mu p}{2} - \dfrac{N}{2}\\
			&= -\sum_{s,t,j}   (\Delta^{(k-2)}E_s^t)_{(0,k-1)} \otimes (E_j^j)_{k-1} \otimes (E_t^s)_k \\
			&- \sum_{s,t}(E_s^t)_{k-1} \otimes (E_t^s)_k +\dfrac{|\xi|+n}{N} + \dfrac{\mu q - \mu p}{2} - \dfrac{N}{2}
		\end{align*}
		Take the fact $\sum_{j} (E_j^j)_{k-1} = (I_N)_{k-1}$. The above computation continues
		\begin{align*}
			&= -\sum_{s,t} (\Delta^{(k-2)}E_s^t)_{(0,k-1)} \otimes (I_N)_{k-1} \otimes (E_t^s)_k \\
			&- \sum_{s,t}(E_s^t)_{k-1} \otimes (E_t^s)_k + \dfrac{|\xi|+n}{N} + \dfrac{\mu q - \mu p}{2} - \dfrac{N}{2}\\
			&= -\sum_{s,t}(\Delta^{(k-1)}E_s^t)_{(0,k)} \otimes (E_t^s)_k + \dfrac{|\xi|+n}{N} + \dfrac{\mu q - \mu p}{2} - \dfrac{N}{2}.
		\end{align*}
	\end{proof}

	The Lie algebra $\gl_N$ has a basis $\{E_s^t| 1 \leq s,t \leq N\}$ with the dual basis $\{E_t^s\}$ with respect to the Killing form. Let $C$ denote the Casimir element of $U(\gl_N)$, then $C = \sum_{s,t}E_s^t E_t^s$. The following computation follows
	\begin{align*}
		\Delta(C) =& \sum_{s,t} \Delta(E_s^t) \Delta(E_t^s)\\
		=& \sum_{s,t} (E_s^t \otimes 1 + 1 \otimes E_s^t)(E_t^s \otimes 1 + 1 \otimes E_t^s)\\
		=& (\sum_{s,t} E_{s}^t E_t^s)\otimes 1 + 1 \otimes (\sum_{s,t}E_s^t E_t^s) + 2\sum_{s,t} E_s^t \otimes E_t^s.
	\end{align*}
	Thus
	$$\sum_{s,t} E_s^t \otimes E_t^s = \dfrac{\Delta(C) - C \otimes 1 - 1 \otimes C}{2}.$$
	\subsection{Weights and contents}
	In \cite{R2}, Ram talked about the standard tableaux and representations of affine Hecke algebra of type $C$ and analyzed the weights in terms of boxes. Now let us analyze the weights of $F_{n,p,\mu}(V^{\xi})$ in terms of contents. In section 5, we obtain a basis of the $(\ft_0,\mu)$-invariant space $F_{n,p,\mu}(V^{\xi})=(V^{\xi} \otimes V^{\otimes n})^{\ft_0,\mu}$ indexed by $Tab(\varphi_{n,p,\mu}^{\xi})$, i.e. standard tableaux on a family of skew shapes $\nu / \xi$ where $\nu$ are obtained by Okada's theorem. The action of $y_k$ on the basis element indexed by standard tableau $T$ is by a scalar. Moreover, this scalar is computed in terms of the content of the box fixed by $k$.
	\begin{thm}
		Let $v_T$ denote the basis element of the invariant space indexed by standard tableau $T$. Then $v_T$ is an eigenvector of $y_k$ and the eigenvalue is computed as
		$$
		-cont_T(k) +\s,
		$$
		where $\s= \dfrac{|\xi|+n}{N} +\dfrac{\mu q - \mu p}{2}-\dfrac{N}{2}$.
	\end{thm}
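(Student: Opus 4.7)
The plan is to identify the operator computed in Proposition 6.2 with a difference of quadratic Casimirs, and then read off the scalar by which it acts on $v_T$ from the Casimir eigenvalues on successive isotypic components along the chain determined by $T$.

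First, rewrite the formula from Proposition 6.2 as $y_k = -X_k + \mathfrak{s}$, where
\[
X_k = \sum_{s,t}(\Delta^{(k-1)}E_s^t)_{(0,k)} \otimes (E_t^s)_k.
\]
Applying the Casimir identity $\sum_{s,t} E_s^t \otimes E_t^s = \tfrac12(\Delta(C) - C\otimes 1 - 1\otimes C)$ from Section 6.1 with the first tensor factor taken to be $V^\xi\otimes V_1\otimes\cdots\otimes V_{k-1}$ (with $\gl_N$ acting through $\Delta^{(k-1)}$) and the second factor $V_k$, I can rewrite
\[
X_k = \tfrac12\bigl(\Delta^{(k)}(C)\big|_{V^\xi\otimes V^{\otimes k}} - \Delta^{(k-1)}(C)\big|_{V^\xi\otimes V^{\otimes(k-1)}} - C\big|_{V_k}\bigr),
\]
extended trivially on $V_{k+1}\otimes\cdots\otimes V_n$.

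Next, I would invoke the construction of the basis $\{v_T\}$ from Section 5: the basis vector $v_T$ arises from iterated application of Pieri's rule, so it lies in a distinguished chain of isotypic components
\[
V^{\lambda^{(n)}} \hookrightarrow V^{\lambda^{(n-1)}}\otimes V \hookrightarrow \cdots \hookrightarrow V^{\lambda^{(k)}}\otimes V^{\otimes(n-k)} \hookrightarrow \cdots \hookrightarrow V^\xi\otimes V^{\otimes n},
\]
where $\lambda^{(k)} = \xi \cup \{T(1),\ldots,T(k)\}$ is the Young diagram obtained by adding to $\xi$ the cells of $T$ labeled $1,\ldots,k$. In particular $v_T$ is an eigenvector of $\Delta^{(k)}(C)$ with eigenvalue $c(\lambda^{(k)}) := \sum_i \lambda^{(k)}_i(\lambda^{(k)}_i + N - 2i + 1)$, and $C|_{V_k}$ acts by $N$ on the vector representation.

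It remains to compute $c(\lambda^{(k)}) - c(\lambda^{(k-1)}) - N$. A direct calculation with $\lambda^{(k)} = \lambda^{(k-1)} \cup \{(\ci(k),\cj(k))\}$ shows $c(\lambda^{(k)}) - c(\lambda^{(k-1)}) = 2(\cj(k) - \ci(k)) + N$, so
\[
X_k v_T = (\cj(k) - \ci(k))\,v_T = \mathrm{cont}_T(k)\,v_T,
\]
and therefore $y_k v_T = (-\mathrm{cont}_T(k) + \mathfrak{s})\,v_T$, as claimed. The main step requiring care is the second one: one must verify that the basis $v_T$ constructed via iterated Pieri in Section 5 genuinely factors through the chain of subrepresentations indexed by $\lambda^{(0)}\subset\lambda^{(1)}\subset\cdots\subset\lambda^{(n)}$, so that $\Delta^{(k)}(C)$ indeed acts by the scalar $c(\lambda^{(k)})$ on $v_T$ for every $k$; the rest is a routine Casimir-eigenvalue computation. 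The identification of the content of the cell added at step $k$ with the eigenvalue works uniformly across all shapes in $D(\varphi^\xi_{n,p,\mu})$ because each such shape is literally realized as $\nu/\xi^{(1)}$ for the corresponding $\nu$ in Okada's theorem, so positions in $\Z^2$ (and hence contents) coincide with those inherited from the Pieri chain.
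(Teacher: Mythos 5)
Your proposal is correct and follows essentially the same route as the paper's proof: both rewrite the operator $\sum_{s,t}(\Delta^{(k-1)}E_s^t)_{(0,k)}\otimes(E_t^s)_k$ as the Casimir difference $\tfrac12(C_{(0,k+1)}-C_{(0,k)}-C_k)$, place $v_T$ in the chain of isotypic components $V^{\nu^{(k)}}$ with $\nu^{(k)}=\xi\cup T(\{1,\dots,k\})$, and compute the eigenvalue difference $\langle\nu^{(k)},\nu^{(k)}+2\rho\rangle-\langle\nu^{(k-1)},\nu^{(k-1)}+2\rho\rangle-N=2\,cont_T(k)$. Your arithmetic and the identification of the scalar agree with the paper's.
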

	\begin{proof}
		Let us $T \in Tab(\varphi_{n,p,\mu}^{\xi})$. Since $T$ is a standard tableau, then $T$ corresponds to a sequence $(\nu^{(k)})_{k =0}^{k = n}$ of Young diagrams, where 
		\begin{align*}
			&\nu^{(0)}  =\xi,\\
			&\nu^{(1)} = \xi \cup T(\{1\}),\\
			&\nu^{(2)} = \xi \cup T(\{1, 2\}),\\
			& \cdots \\
			&\nu^{(n)} = \xi \cup T(\{1, 2, \cdots, n\}),
		\end{align*}
		where $T(\{1, \cdots, k\})$ is the collection of cells filled by numbers $1, \cdots, k$, i.e. the Young diagram $\nu^{(k)}$ is formed by adding the cells filled by numbers $1, \cdots, k$ to the Young diagram $\xi$. So it follows, for $k = 1, \cdots, n$, $$v_T \in (V^{\xi} \otimes V^{\otimes k})[\nu^{(k)}] \otimes V^{\otimes (n - k)},$$ where $(V^{\xi} \otimes V^{\otimes k})[V^{\nu^{(k)}}]$ denotes the $V^{\nu_k}$-isotopic component of the tensor product $V^{\xi} \otimes V^{\otimes k}$.
		By the previous subsection 6.1, it follows that the term $\sum_{s,t}(\Delta^{(k-1)}(E_s^t))_{(0,k)} \otimes (E_t^s)_k$ acts on $v_T$ by
		$$\dfrac{C_{(0,k+1)} -C_{(0,k)} \otimes 1_k -1_{(0,k)} \otimes C_k  }{2}.$$
		Moreover, the Casimir element acts on the highest weight module $V^{\nu}$ by the scalar $\langle \nu, \nu + 2 \rho \rangle$, where the weight $2 \rho = \sum_{i = 1}^{N} (N - 2i + 1) \epsilon_i$. So for each $k$ such that $1 \leq k \leq N$, $C_{(0,k+1)}$ acts on $V^{\nu^{(k)}}$ by the scalar $\langle \nu^{(k)}, \nu^{(k)} + 2\rho \rangle$, $C_{(0,k)}$ acts on $V^{\nu^{(k-1)}}$ by the scalar $\langle \nu^{(k-1)}, \nu^{(k-1)} + 2\rho \rangle$ and $C_k$ acts on $V$ by the scalar $\langle \epsilon, \epsilon + 2\rho \rangle = N$, namely $$\dfrac{C_{(0,k+1)} -C_{(0,k)} \otimes 1_k -1_{(0,k)} \otimes C_k  }{2}$$ acts by
		$$\dfrac{1}{2}(\langle \nu^{(k)}, \nu^{(k)} + 2\rho \rangle - \langle \nu^{(k - 1)}, \nu^{(k - 1)} + 2 \rho \rangle-  \langle \epsilon, \epsilon + 2 \rho \rangle) .$$
		Let $T(k)$ be the cell $(\ci(k), \cj(k))$, then $\nu^{(k)}_{\ci(k)} = \cj(k) = \nu^{(k-1)}_{\ci(k)} + 1$ and $\nu_i^{(k)} = \nu_i^{(k-1)}$, for $i \neq \ci(k)$.
		\begin{align*}
			&\dfrac{1}{2}(\langle \nu^{(k)}, \nu^{(k)} + 2\rho \rangle - \langle \nu^{(k - 1)}, \nu^{(k - 1)} + 2 \rho \rangle - \langle \epsilon, \epsilon + 2 \rho \rangle)\\
			= &\dfrac{1}{2}((\cj(k) + N -2\ci(k) + 1)(\cj(k)) - (\cj(k)+N-2\ci(k))(\cj(k)-1) -N)\\
			= &  \cj(k) - \ci(k).
		\end{align*}
		Then the statement follows.
	\end{proof}
	\begin{thm}
		Let $F_{n,p,\mu}(V^{\xi})$ denote the image of the irreducible $GL_N$-module $V^{\xi}$, for some $\xi \in P^+$, under Etingof-Freund-Ma functor. Then $F_{n,p,\mu}(V^{\xi})$ has a basis indexed tableaux in $\tab$, i.e. $\{v_T|T \in \tab\}$. This basis is a weight basis with each basis vector $v_T$ is a weight vector of weight $\zeta_T=-cont_T+\s$. So $F_{n,p,\mu}(V^{\xi})$ is a $\Y$-semisimple representation of $H_n(1,p-q-\mu N)$. Moreover, it is obvious different standard tableaux give different weights. Hence each weight space is one dimensional.
	\end{thm}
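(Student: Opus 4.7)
The proof essentially assembles results already established, plus a short injectivity argument. The plan is to first combine the basis statement with the eigenvalue computation, and then show that distinct tableaux produce distinct content sequences.

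First, by the Theorem at the end of Section 5, $\{v_T \mid T \in \tab\}$ is a basis of $F_{n,p,\mu}(V^{\xi})$. By the Theorem immediately preceding the one under consideration, each $v_T$ is a simultaneous eigenvector for $y_1, \ldots, y_n$ with $y_k v_T = (-\mathrm{cont}_T(k) + \s) v_T$. Thus $v_T \in L_{\zeta_T}$ for the weight $\zeta_T(k) = -\mathrm{cont}_T(k) + \s$. Since the whole representation is already the span of these joint eigenvectors, the decomposition $F_{n,p,\mu}(V^{\xi}) = \bigoplus_T \C v_T$ sits inside $\bigoplus_\zeta L_\zeta$, giving $\Y$-semisimplicity.

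The only step requiring an argument is the injectivity of the map $T \mapsto \zeta_T$, which, since $\s$ is a constant independent of $T$, reduces to the injectivity of $T \mapsto (\mathrm{cont}_T(1), \ldots, \mathrm{cont}_T(n))$. The plan is a step-by-step reconstruction of $T$ from its content sequence. For a standard tableau $T$ with $\mathrm{Im}(T) \in D(\varphi^{\xi}_{n,p,\mu})$, set $\nu^{(k)} = \xi \cup T(\{1,\ldots,k\})$ for $k = 0, 1, \ldots, n$. Standardness of $T$ on the skew shape $\nu^{(n)}/\xi$ guarantees that each $\nu^{(k)}$ is a genuine Young diagram. The key observation is that any Young diagram $\mu$ has at most one addable cell on a given diagonal $j - i = c$, because addable cells are in bijection with the distinct row lengths and hence occupy distinct diagonals. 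Consequently, $\nu^{(0)} = \xi$ together with the integer $\mathrm{cont}_T(1)$ uniquely determines $T(1)$, and hence $\nu^{(1)}$; continuing inductively, each $\nu^{(k-1)}$ and $\mathrm{cont}_T(k)$ determine $T(k)$ and $\nu^{(k)}$, so $T$ itself is recovered from $(\mathrm{cont}_T(1), \ldots, \mathrm{cont}_T(n))$. Note that this reconstruction never needs to know which shape of $D(\varphi^{\xi}_{n,p,\mu})$ the tableau sits on: the shape emerges from the reconstruction, so tableaux on different shapes in $D(\varphi^{\xi}_{n,p,\mu})$ automatically have distinct content sequences.

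Once injectivity is in hand, the weight space $F_{n,p,\mu}(V^{\xi})_{\zeta_T}$ contains $v_T$ and nothing else from the basis, so every weight space is one-dimensional. I expect the only non-routine step to be the addable-cell observation underlying injectivity; the rest is assembly of prior results.
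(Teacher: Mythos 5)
Your proposal is correct and matches the paper's (implicit) approach: the paper gives no separate proof of this theorem, treating it as the assembly of Theorem 5.8 (the basis indexed by $\tab$) with Theorem 6.2 (the eigenvalue $-cont_T(k)+\s$), and simply asserting inside the statement that distinct tableaux give distinct weights. Your addable-cell reconstruction --- a Young diagram has at most one addable cell on each diagonal, so $\xi$ together with the content sequence determines $T$ step by step --- is a correct justification of the step the paper calls ``obvious,'' and it legitimately covers tableaux living on different shapes of $D(\varphi^{\xi}_{n,p,\mu})$, since every shape in that collection is of the form $\nu/\xi$ over the common base partition $\xi$.
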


	\section{Intertwining operators}
	\subsection{Definition of intertwining operators}~\\
	\begin{definition}
		For $i = 1, \cdots, n-1$, define the intertwining operators
		$$\phi_i = [s_i , y_i],$$
		and for $\gamma_n$, define
		$$\phi_n = [\gamma_n, y_n].$$
	\end{definition}
	\begin{prop}
		The intertwining operators $\phi_i$ satisfy the braid relations
		\begin{align*}
			&\phi_i \phi_{i+1} \phi_i = \phi_{i+1} \phi_i \phi_{i+1}, i = 1, \cdots, n - 1,\\
			&\phi_i \phi_j = \phi_j \phi_i, |i - j| > 1,\\
			&\phi_{n-1} \phi_n \phi_{n-1} \phi_n = \phi_n \phi_{n-1} \phi_n \phi_{n-1}.
		\end{align*}
	\end{prop}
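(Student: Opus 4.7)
The plan is to rewrite each intertwiner in a convenient normal form and then verify the three families of relations in turn. Using the defining relation $s_i y_i - y_{i+1} s_i = 1$ we have
\[
\phi_i = s_iy_i - y_is_i = (y_{i+1}-y_i)s_i + 1, \qquad i = 1, \dots, n-1,
\]
and using $\gamma_n y_n + y_n\gamma_n = \kappa_2$ we obtain $\phi_n = 2\gamma_n y_n - \kappa_2$. A short direct computation (using only relations (1)--(10)) shows the intertwining property $\phi_i\, p(y_1,\dots,y_n) = (s_i p)(y_1,\dots,y_n)\,\phi_i$ for every polynomial $p$, where $s_i$ acts on $\C[y_1,\dots,y_n]$ by permuting $y_i \leftrightarrow y_{i+1}$ for $i<n$ and by $y_n \mapsto -y_n$ for $i=n$. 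This observation is not strictly needed for the braid relations, but it provides a useful sanity check on each side of a proposed identity (both sides must intertwine the same action of $s_{i_1}\cdots s_{i_k}$ on the $y$'s).

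For the commuting relations $\phi_i\phi_j = \phi_j\phi_i$ with $|i-j|>1$, I would simply invoke the relations $s_is_j = s_js_i$, $s_i y_j = y_j s_i$ for $j\neq i,i+1$, $\gamma_n s_i = s_i \gamma_n$ for $i<n-1$, and $\gamma_n y_j = y_j \gamma_n$ for $j<n$. Each of $s_i, y_i, y_{i+1}$ then commutes with each of the factors appearing in $\phi_j$, whence $\phi_i$ and $\phi_j$ commute. For the type A braid relation $\phi_i\phi_{i+1}\phi_i = \phi_{i+1}\phi_i\phi_{i+1}$ with $i<n-1$, this is the standard braid identity for intertwiners in the degenerate affine Hecke algebra of type A; it is verified by expanding both sides using $\phi_k = (y_{k+1}-y_k)s_k + 1$ and invoking the type A braid relation $s_is_{i+1}s_i = s_{i+1}s_is_{i+1}$ together with the fact that the $y$'s commute and satisfy $s_ky_j = y_j s_k$ for $j\neq k,k+1$.

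The main obstacle is the type C braid relation $\phi_{n-1}\phi_n\phi_{n-1}\phi_n = \phi_n\phi_{n-1}\phi_n\phi_{n-1}$, where the two kinds of intertwiners interact nontrivially. Both sides lie in the subalgebra generated by $s_{n-1}, \gamma_n, y_{n-1}, y_n$, so by the PBW decomposition of $H_n(\kappa_1, \kappa_2)$ every element of this subalgebra has a unique expression of the form $\sum_{w \in \langle s_{n-1},\gamma_n\rangle} w\, p_w(y_{n-1},y_n)$ with $p_w \in \C[y_{n-1},y_n]$. My plan is to expand $\phi_{n-1}\phi_n\phi_{n-1}\phi_n$ and $\phi_n\phi_{n-1}\phi_n\phi_{n-1}$ by distributing, then repeatedly use the two relations $s_{n-1}y_{n-1} = y_n s_{n-1} + 1$ and $\gamma_n y_n = \kappa_2 - y_n\gamma_n$ to move all $y$-factors to the right of all $s$- and $\gamma$-factors. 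The $W_0$-factors that arise are all words in $s_{n-1}$ and $\gamma_n$ of length at most four, and the type $C_2$ braid relation $s_{n-1}\gamma_n s_{n-1}\gamma_n = \gamma_n s_{n-1}\gamma_n s_{n-1}$ in the Weyl group matches up the "leading" terms on both sides. After this reduction both sides lie in the free module $\C W_0 \otimes \C[y_{n-1}, y_n]$ for $W_0 = \langle s_{n-1},\gamma_n\rangle$, and verifying the identity amounts to comparing at most eight polynomial coefficients, one for each element of this dihedral group. I expect the intertwining property of paragraph 1 to be a useful organizing principle here: both sides must send $p(y_{n-1},y_n)$ to $(s_{n-1}\gamma_n s_{n-1}\gamma_n)(p)(y_{n-1},y_n)$ times the same leading Weyl-group element, which cuts down the verification to checking the lower-order polynomial corrections.
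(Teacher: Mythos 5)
The paper offers no proof of this proposition at all --- it is stated and immediately followed by Definition 7.3 --- so there is nothing to compare your argument against; I can only assess it on its own merits, and on those merits your strategy is correct and is the standard one. Your normal forms $\phi_i=(y_{i+1}-y_i)s_i+1$ and $\phi_n=2\gamma_ny_n-\kappa_2$ agree with the formulas the paper itself records in \S 7.2, the commuting case is immediate from locality of the generators, and the type $A$ case is the classical intertwiner computation. Two remarks. First, the range ``$i=1,\cdots,n-1$'' in the first displayed relation must be read as $i=1,\cdots,n-2$ (the same typo occurs in relation (2) of the Weyl group presentation); otherwise the $i=n-1$ instance would assert a length-three braid relation between $\phi_{n-1}$ and $\phi_n$, contradicting the length-four relation, and is false. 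You implicitly make this correction by treating $i<n-1$ in the type $A$ case and handling the pair $(\phi_{n-1},\phi_n)$ only through the length-four relation, which is the right reading. Second, the only substantive content of the proposition is the rank-two type $C$ identity, and there your proposal stops at an outline: reduce to the subalgebra generated by $s_{n-1},\gamma_n,y_{n-1},y_n$, normal-order using $s_{n-1}y_{n-1}=y_ns_{n-1}+1$ and $\gamma_ny_n=\kappa_2-y_n\gamma_n$, and compare the eight polynomial coefficients in the PBW decomposition. This will succeed --- the coefficient of the longest element $s_{n-1}\gamma_ns_{n-1}\gamma_n=\gamma_ns_{n-1}\gamma_ns_{n-1}$ on either side works out to $4y_{n-1}y_n(y_{n-1}^2-y_n^2)$, consistent with Lemma 7.12 --- but as written the argument is not complete until the seven lower-order coefficients are actually checked, or until one instead cites the general fact (due to Lusztig, and used in \cite{K} and \cite{R2}) that intertwiners in a graded Hecke algebra of arbitrary type satisfy the braid relations.
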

	Since the operators $\phi_i$'s satisfy the same braid relations with $s_i$'s and $\gamma_n$, it makes sense to define the following.
	\begin{definition}
		Let $W_0$ denote the finite Weyl group of type $C_n$, for each $w \in W_0 $, it has a reduced expression $w = s_{i_1} s_{i_2} \dots s_{i_m}$, $l(w)=m$, here we take the convention $s_n = \gamma_n$. Define
		$$\phi_w = \phi_{i_1} \phi_{i_2} \dots \phi_{i_m}.$$
	\end{definition}
	
	\subsection{Properties of intertwining operators}~\\
	Some computations on intertwining operators:
	\begin{enumerate}
		\item $\phi_i = s_i (y_i - y_{i+1}) - 1$,\\
		$\phi_n = 2 \gamma_n y_n - \kappa_2$.\\
		\item $\phi_i ^2 = (1 - y_i + y_{i+1})(1 + y_i - y_{i+1})$,\\\\
		$\phi_n ^2 = (\kappa_2 - 2y_n)(\kappa_2 + 2y_n)$.\\
	\end{enumerate}
	\begin{definition}
		Define the actions of $W_0$ on weight $\zeta=[\zeta_1, \cdots, \zeta_n]$: for an arbitrary $w \in W_0$, the action of $w$ is 
		$$w . \zeta =  \zeta \circ w^{-1},$$
		where we take $\zeta_{-k}=-\zeta_k$.
	\end{definition}

	\begin{thm}
		Let $L$ be a $\Y$-semisimple module and $L_{\zeta}$ denote the weight space of weight $\zeta$, then
		$$\phi_{w} L_{\zeta} \subset L_{w. \zeta}.$$
	\end{thm}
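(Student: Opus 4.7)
The plan is to reduce the statement to the case of simple reflections by induction on the length $\ell(w)$, and then to verify directly that $\phi_iL_\zeta\subset L_{s_i\cdot\zeta}$ for each simple generator $s_i$, using the convention $s_n=\gamma_n$. Well-definedness of $\phi_w$ independently of the reduced expression is exactly the braid relations for the $\phi_i$ already recorded in Proposition 7.2. Since $w\cdot\zeta=\zeta\circ w^{-1}$ is a genuine left action, a reduced decomposition $w=s_{i_1}\cdots s_{i_m}$ then chains the simple-reflection case to give $\phi_wL_\zeta=\phi_{i_1}\cdots\phi_{i_m}L_\zeta\subset L_{(s_{i_1}\cdots s_{i_m})\cdot\zeta}=L_{w\cdot\zeta}$.

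For the base case, fix $v\in L_\zeta$ and establish $y_j\phi_iv=(s_i\cdot\zeta)_j\,\phi_iv$ for every index $j$. There are three essentially different computations. When $j\notin\{i,i+1\}$ (for $i<n$) or $j\neq n$ (for $i=n$), the generator $s_i$ commutes with $y_j$ by the relation $s_iy_j=y_js_i$, so $\phi_i$ commutes with $y_j$ and the $j$-th eigenvalue is preserved, matching $(s_i\cdot\zeta)_j=\zeta_j$. When $i<n$ and $j\in\{i,i+1\}$, I would rewrite
\[
\phi_i=s_i(y_i-y_{i+1})-1=(y_{i+1}-y_i)s_i+1
\]
using the DAHA relation $s_iy_i=y_{i+1}s_i+1$; a direct multiplication then yields $y_i\phi_i=\phi_iy_{i+1}$ and $y_{i+1}\phi_i=\phi_iy_i$, so applied to $v$ the $y_i$- and $y_{i+1}$-eigenvalues swap, matching $(s_i\cdot\zeta)_i=\zeta_{i+1}$ and $(s_i\cdot\zeta)_{i+1}=\zeta_i$. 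For $i=n$, $j=n$, I would use $\gamma_ny_n+y_n\gamma_n=\kappa_2$ to compute
\[
y_n\phi_n=y_n(2\gamma_ny_n-\kappa_2)=2(\kappa_2-\gamma_ny_n)y_n-\kappa_2y_n=-(2\gamma_ny_n-\kappa_2)y_n=-\phi_ny_n,
\]
so $\phi_nv$ has $y_n$-eigenvalue $-\zeta_n=(s_n\cdot\zeta)_n$ under the convention $\zeta_{-n}=-\zeta_n$.

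The induction is then routine: writing $w=s_{i_1}w'$ with $\ell(w')=\ell(w)-1$, the inductive hypothesis gives $\phi_{w'}L_\zeta\subset L_{w'\cdot\zeta}$, and the base case applied to the weight $w'\cdot\zeta$ yields $\phi_{i_1}L_{w'\cdot\zeta}\subset L_{s_{i_1}\cdot(w'\cdot\zeta)}=L_{w\cdot\zeta}$. I do not foresee any genuine obstacle; the main things to watch are the sign convention $\zeta_{-k}=-\zeta_k$ in the $s_n$ piece and the order of operators when substituting $s_iy_i=y_{i+1}s_i+1$ in the $j\in\{i,i+1\}$ computation, both of which are purely bookkeeping against the defining relations of $H_n(\kappa_1,\kappa_2)$.
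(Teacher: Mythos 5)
Your proposal is correct and follows essentially the same route as the paper: reduce to the simple generators and verify the intertwining relations $y_i\phi_i=\phi_i y_{i+1}$, $y_{i+1}\phi_i=\phi_i y_i$, $y_j\phi_i=\phi_i y_j$ for $j\neq i,i+1$, and $y_n\phi_n=-\phi_n y_n$, which is exactly the content of the paper's two-case proof. You additionally write out the algebra behind these relations and the length induction, both of which the paper leaves implicit; your computations check out against the defining relations and the convention $\zeta_{-k}=-\zeta_k$.
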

	\begin{proof}
		It suffices to show the statement is true for each operator $\phi_i$.\\
		Case 1. When $1 \leq i \leq n-1$. We have the following facts that
		$$y_i \phi_i  = \phi_i y_{i + 1},$$
		$$y_{i + 1} \phi_i  = \phi_i y_i,$$
		and 
		$$y_j \phi_i = \phi_i y_j, j \neq i \text{ or } i+1.$$
		Case 2. Consider $\phi_n$. We have facts that
		$$y_n \phi_n = -\phi_n y_n,$$
		$$y_j \phi_n = \phi_n y_j,  j \neq n. $$
	\end{proof}
	\begin{rmk}
		Since each weight space of $\F_{n,p,\mu}(V^{\xi})$ is one dimensional, so the action of $\phi_i$ is either $0$ or an isomorphism.
	\end{rmk}
	
	\begin{lemma}
		If   $\zeta_i - \zeta_{i+1} \neq \pm 1$ for some $i \in \{1, 2, \cdots, n-1 \}$, then $\phi_i v_{\zeta} \neq 0$, where $v_{\zeta}$ is the weight vector of the weight $\zeta$.
	\end{lemma}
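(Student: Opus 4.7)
The plan is to exploit the squared-operator formula listed at the start of Subsection 7.2, namely
\[
\phi_i^2 = (1 - y_i + y_{i+1})(1 + y_i - y_{i+1}),
\]
which expresses $\phi_i^2$ as a polynomial in the commuting generators $y_i, y_{i+1} \in \Y$. Since $v_\zeta$ is a simultaneous eigenvector for the $y_j$ with $y_j v_\zeta = \zeta_j v_\zeta$, applying this formula gives immediately
\[
\phi_i^2 v_\zeta = (1 - \zeta_i + \zeta_{i+1})(1 + \zeta_i - \zeta_{i+1}) v_\zeta.
\]

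Next, I would observe that the two scalar factors vanish precisely when $\zeta_i - \zeta_{i+1} = 1$ and $\zeta_i - \zeta_{i+1} = -1$ respectively. The hypothesis $\zeta_i - \zeta_{i+1} \neq \pm 1$ therefore makes both factors nonzero, so $\phi_i^2 v_\zeta$ is a nonzero scalar multiple of $v_\zeta$. In particular $\phi_i^2 v_\zeta \neq 0$, which forces $\phi_i v_\zeta \neq 0$, completing the argument.

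There is no real obstacle here: the computation $\phi_i^2 = (1 - y_i + y_{i+1})(1 + y_i - y_{i+1})$ has already been stated, and the $\Y$-semisimplicity of $F_{n,p,\mu}(V^\xi)$ (Theorem in Subsection 6.2) guarantees that $v_\zeta$ really is a joint eigenvector rather than a generalized one, so substitution of scalars is legitimate. The only thing worth double-checking is the sign convention in the definition $\phi_i = s_i(y_i - y_{i+1}) - 1$, but this feeds directly into the quoted $\phi_i^2$ formula, so no additional verification is needed.
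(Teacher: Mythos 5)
Your proposal is correct and matches the paper's own argument: both rest on the identity $\phi_i^2 = (1 - y_i + y_{i+1})(1 + y_i - y_{i+1})$ and the substitution of the eigenvalues $\zeta_i, \zeta_{i+1}$; the paper merely phrases it contrapositively (assuming $\phi_i v_\zeta = 0$ and deducing $\zeta_i - \zeta_{i+1} = \pm 1$) while you argue directly. No gap.
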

	\begin{proof}
		Suppose that $\phi_i v_{\zeta} = 0$. Then $\phi_i^2 v_{\zeta} = 0$. By the computation above $\phi_i ^2 = (1 - y_i + y_{i+1})(1 + y_i - y_{i+1})$. Then $\phi_i^2 v_{\zeta} = (1 - \zeta_i + \zeta_{i+1})(1 + \zeta_i - \zeta_{i+1})v_{\zeta} = 0$. Then we have that $\zeta_i - \zeta_{i+1} = \pm 1$.
	\end{proof}
	Similarly, we have the following fact.
	\begin{lemma}
		If $\zeta_n \neq \pm \frac{\kappa_2}{2}$, then $\phi_n v_{\zeta} \neq 0$, where $v_{\zeta}$ is the weight vector of the weight $\zeta$.
	\end{lemma}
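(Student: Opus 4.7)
The plan is to mimic the proof of the preceding lemma, taking the contrapositive and using the explicit formula for $\phi_n^2$ recorded among the computations in Section 7.2.

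First I would assume, for contradiction, that $\phi_n v_{\zeta} = 0$. Squaring, this forces $\phi_n^2 v_{\zeta} = 0$ as well. By item (2) of the computations, $\phi_n^2 = (\kappa_2 - 2y_n)(\kappa_2 + 2y_n)$, and since $v_{\zeta}$ is a $y_n$-eigenvector of eigenvalue $\zeta_n$, one obtains
\begin{equation*}
0 = \phi_n^2 v_{\zeta} = (\kappa_2 - 2\zeta_n)(\kappa_2 + 2\zeta_n)\, v_{\zeta}.
\end{equation*}
Since $v_{\zeta} \neq 0$, this forces $(\kappa_2 - 2\zeta_n)(\kappa_2 + 2\zeta_n) = 0$, i.e. $\zeta_n = \pm \tfrac{\kappa_2}{2}$, contradicting the hypothesis.

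There is essentially no obstacle here: the content is purely the algebraic identity $\phi_n^2 = \kappa_2^2 - 4y_n^2$, which itself is a direct consequence of the defining relation $\gamma_n y_n + y_n \gamma_n = \kappa_2$ together with $\gamma_n^2 = 1$ and the definition $\phi_n = 2\gamma_n y_n - \kappa_2$. The only thing worth flagging is that the formula $\phi_n^2 = (\kappa_2 - 2y_n)(\kappa_2 + 2y_n)$ tacitly uses that $y_n$ and $\kappa_2$ commute (which is automatic as $\kappa_2$ is a scalar) so that the two factors can be evaluated independently on the eigenvector $v_\zeta$; no care is needed about operator ordering when applying them to a simultaneous eigenvector.
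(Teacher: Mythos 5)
Your argument is exactly the paper's: both assume $\phi_n v_{\zeta}=0$, square to get $\phi_n^2 v_{\zeta}=0$, and use the identity $\phi_n^2=(\kappa_2-2y_n)(\kappa_2+2y_n)$ to force $\zeta_n=\pm\frac{\kappa_2}{2}$, contradicting the hypothesis. The proposal is correct and matches the paper's proof.
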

	\begin{proof}
		Suppose that $\phi_n v_{\zeta} = 0$. Then $\phi_n^2 v_{\zeta} = 0$. By the computation above $\phi_n ^2 = (\kappa_2 - 2y_n)(\kappa_2 + 2y_n)$. Then $\phi_n^2 v_{\zeta} = \phi_n ^2 = (\kappa_2 - 2 \zeta_n)(\kappa_2 + 2 \zeta_n)v_{\zeta} = 0$. Then we have that $\zeta(n) = \pm \frac{\kappa_2}{2}$.
	\end{proof}
	
	\subsection{Properties of irreducible $\Y$-semisimple representations}
	Let $L$ be an irreducible $\Y$-semisimple representation of $H_n(\kappa_1,\kappa_2)$. Let $\zeta=[\zeta_1,\cdots,\zeta_n]$ is a weight $L$.
	\begin{thm}
		If $\zeta_i = \zeta_{i+1}$ for some $1 \leq i \leq n-1$ , then $L_{\zeta} = 0$.
	\end{thm}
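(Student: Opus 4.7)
The plan is to argue by contradiction: suppose there is a nonzero $v \in L_{\zeta}$ with $\zeta_i = \zeta_{i+1}$, and produce a vector that is a generalized $\Y$-eigenvector of weight $\zeta$ but not an actual eigenvector, contradicting $\Y$-semisimplicity.

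The key computation would be to evaluate $y_j (s_i v)$ for every $j$ using the defining relations (5), (6), (9) of the degenerate affine Hecke algebra. For $j \neq i, i+1$ relation (5) gives immediately $(y_j - \zeta_j) s_i v = 0$. For $j = i$, relation (9) written as $y_i s_i = s_i y_{i+1} + \kappa_1$ yields
\begin{equation*}
(y_i - \zeta_i) s_i v \;=\; (s_i y_{i+1} + \kappa_1 - \zeta_i s_i) v \;=\; (\zeta_{i+1} - \zeta_i)\, s_i v + \kappa_1 v \;=\; \kappa_1 v,
\end{equation*}
using $\zeta_i = \zeta_{i+1}$. An analogous calculation gives $(y_{i+1} - \zeta_{i+1}) s_i v = -\kappa_1 v$. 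Applying $(y_i - \zeta_i)$ once more kills this, and similarly for $y_{i+1}$, so $s_i v$ lies in the generalized eigenspace $L_{\zeta}^{gen}$ for every $y_j$ simultaneously.

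The next step is to invoke $\Y$-semisimplicity. The hypothesis $L = \bigoplus_{\eta} L_{\eta}$ forces $L_{\zeta}^{gen} = L_{\zeta}$: any $u \in L_{\zeta}^{gen}$ decomposes as $\sum u_{\eta}$ with $u_{\eta} \in L_{\eta}$, and the condition $(y_j - \zeta_j)^k u = 0$ forces $u_{\eta} = 0$ whenever $\eta_j \neq \zeta_j$, for each $j$, hence $u \in L_{\zeta}$. Therefore $s_i v \in L_{\zeta}$ and in particular $(y_i - \zeta_i) s_i v = 0$. Combined with the previous calculation this gives $\kappa_1 v = 0$, contradicting $v \neq 0$ (since $\kappa_1 = 1$ in our setting, or more generally whenever $\kappa_1 \neq 0$). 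The contradiction gives $L_{\zeta} = 0$.

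I do not expect any real obstacle here; the content is the short manipulation with the cross relation $s_i y_i - y_{i+1} s_i = \kappa_1$ together with the observation that $\Y$-semisimplicity is equivalent to the collapse of every generalized eigenspace onto the corresponding eigenspace. The only subtlety worth flagging is that $s_i v$ is automatically nonzero (because $s_i^2 = 1$ forces $s_i v = 0 \Rightarrow v = 0$), so the computation $(y_i - \zeta_i) s_i v = \kappa_1 v \neq 0$ is genuinely inconsistent with $s_i v$ lying in the honest eigenspace $L_{\zeta}$.
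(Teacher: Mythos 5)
Your proof is correct and follows essentially the same route as the paper: both consider $s_i v$, use the cross relation $s_i y_i - y_{i+1}s_i = \kappa_1$ to show that $s_i v$ lies in the generalized $\zeta$-eigenspace while $(y_i-\zeta_i)s_iv=\kappa_1 v\neq 0$, and derive a contradiction with $\Y$-semisimplicity. The paper packages the same computation through the intertwiner $\phi_i$ and the single operator $y_i - y_{i+1}$, but the content is identical.
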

	\begin{proof}
		Let $\zeta$ be a weight such that $\zeta_i = \zeta_{i+1}$. Suppose there exists a nonzero element $v \in L_{\zeta}$. Consider the vector $s_i v$. Since $\phi_i = s_i (y_i - y_{i+1}) - 1 = (y_{i+1} - y_i)s_i + 1$, we have $\phi_i v = -v$.
		\begin{align*}
			(y_i - y_{i+1})s_i v = &(1 - \phi_i) v \\
			=  &2v \neq 0.
		\end{align*}
		And act again by $y_i - y_{i+1}$,
		\begin{align*}
			&(y_i - y_{i+1})^2 s_i v\\
			= &2(y_i - y_{i+1})v =0.
		\end{align*}
		This means $s_i v$ belongs to the generalized eigenspace of $y_i - y_{i+1}$ and does not belong to the eigenspace of $y_i - y_{i+1}$, which contradicts $\Y$-semisimplicity.
	\end{proof}
	
	\begin{thm}
		Let $\kappa_2 \neq 0$. If $\zeta_n = 0$, then $L_{\zeta} = 0$.
	\end{thm}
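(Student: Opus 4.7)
The plan is to mimic the proof of the preceding theorem, playing the role of $(s_i, y_i - y_{i+1})$ with $(\gamma_n, y_n)$. Assume for contradiction that there is a nonzero $v \in L_\zeta$ with $\zeta_n = 0$, so $y_n v = 0$.

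First I would evaluate $\phi_n v$ in two different ways. Directly from the formula $\phi_n = 2\gamma_n y_n - \kappa_2$ recorded in Section 7.2, the relation $y_n v = 0$ gives
\begin{equation*}
\phi_n v = 2\gamma_n (y_n v) - \kappa_2 v = -\kappa_2 v.
\end{equation*}
On the other hand, the defining relation $\gamma_n y_n + y_n \gamma_n = \kappa_2$ lets me rewrite $2\gamma_n y_n = 2\kappa_2 - 2 y_n \gamma_n$, hence $\phi_n = \kappa_2 - 2 y_n \gamma_n$, so
\begin{equation*}
\phi_n v = \kappa_2 v - 2 y_n \gamma_n v.
\end{equation*}
Equating the two expressions yields $y_n \gamma_n v = \kappa_2 v$.

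Next, I would apply $y_n$ once more and use $y_n v = 0$ to obtain $y_n^2 \gamma_n v = \kappa_2 \, y_n v = 0$. Since $\kappa_2 \neq 0$ and $v \neq 0$, the vector $\gamma_n v$ satisfies $y_n^2 \gamma_n v = 0$ but $y_n \gamma_n v = \kappa_2 v \neq 0$. Thus $\gamma_n v$ lies in the generalized $0$-eigenspace of $y_n$ but not in the honest $0$-eigenspace, contradicting the $\Y$-semisimplicity of $L$ (which forces $y_n$ to be diagonalizable). This proves $L_\zeta = 0$.

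The only genuinely new input beyond the previous theorem's argument is the rewriting $\phi_n = \kappa_2 - 2 y_n \gamma_n$ via the type-$C$ relation $\gamma_n y_n + y_n \gamma_n = \kappa_2$; after that, the contradiction with $\Y$-semisimplicity is completely parallel to the $\zeta_i = \zeta_{i+1}$ case, so I do not anticipate any real obstacle.
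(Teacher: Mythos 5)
Your proposal is correct and follows essentially the same route as the paper: both use the two forms $\phi_n = 2\gamma_n y_n - \kappa_2 = \kappa_2 - 2y_n\gamma_n$ coming from the relation $\gamma_n y_n + y_n\gamma_n = \kappa_2$ to conclude $y_n\gamma_n v = \kappa_2 v \neq 0$ while $y_n^2\gamma_n v = 0$, contradicting $\Y$-semisimplicity. No substantive difference.
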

	\begin{proof}
		Let $\zeta$ be a weight such that $\zeta_n = 0$. Suppose there exists a nonzero element $v \in L_{\zeta}$. Consider the vector $\gamma_n v$. Since $\phi_n = 2\gamma_n y_n - \kappa_2 = -2y_n \gamma_n + \kappa_2$, we have $\phi_n v = -\kappa_2 v$.
		\begin{align*}
			2y_n\gamma_n v = &(\kappa_2 - \phi_n) v\\
			= &2\kappa_2 v \neq 0.
		\end{align*}
		Act again by $y_n$, we have
		\begin{align*}
			&2{y_n}^2 \gamma_n v\\
			= &2 \kappa_2 y_n v = 0.
		\end{align*}
		his means $s_i v$ belongs to the generalized eigenspace of $y_n$ and does not belong to the eigenspace of $y_n$, which contradicts $\Y$-semisimplicity.
	\end{proof}
	
	\begin{rmk}
		When $\kappa_2 = 0$, it is possible for an irreducible $\Y$-semisimple module $L$ to contain a nonzero weight space $L_{\zeta}$ with $\zeta_n = 0$. In this case, $\gamma_n v  \in \C v$. Otherwise, the vector $v + \gamma_n v$ generalizes a nonzero proper submodule of $L$, which contradicts the irreducibility.
	\end{rmk}
	
	\begin{lemma}
		For any arbitrary $w \in W_0$, the intertwining operator 
		$$\phi_w = w \Pi_{\alpha_{ij} \in R(w)} (y_i - y_j) + \sum_{x <w} x P(y),$$
		where $P(y)$ is a polynomial of $y_1, \cdots, y_n$.
	\end{lemma}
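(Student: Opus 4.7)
The plan is to proceed by induction on the length $\ell(w)$. The base case $\ell(w) = 0$ gives $w = e$, $\phi_e = 1$, the empty product equals $1$, and the sum over $x < e$ is vacuous. For the inductive step, fix a reduced expression for $w$ and write $w = w'\sigma$ with $\sigma \in \{s_1, \ldots, s_{n-1}, \gamma_n\}$ and $\ell(w') = \ell(w) - 1$, so that $\phi_w = \phi_{w'}\phi_\sigma$. By the inductive hypothesis, $\phi_{w'} = w' Q_{w'} + \sum_{x < w'} x P_{w', x}(y)$, where $Q_{w'} = \prod_{\alpha \in R(w')} y_\alpha$ under the conventions $y_{\epsilon_i - \epsilon_j} = y_i - y_j$, $y_{\epsilon_i + \epsilon_j} = y_i + y_j$, and $y_{2\epsilon_i} = 2y_i$ (matching $\zeta_{-k} = -\zeta_k$). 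Using Section 7.2, write $\phi_\sigma = \sigma \cdot y_{\alpha_\sigma} - c_\sigma$ with $c_{s_i} = 1$ and $c_{\gamma_n} = \kappa_2$.

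The main technical tool is a commutation sublemma: for any polynomial $Q(y) \in \Y$ and any generator $\sigma$,
$$ Q(y) \cdot \sigma = \sigma \cdot \sigma(Q)(y) + R(y), $$
where $\sigma$ acts on $\Y$ by the natural $W_0$-action ($s_i$ swaps $y_i, y_{i+1}$; $\gamma_n$ sends $y_n \mapsto -y_n$) and $R(y)$ has strictly lower degree than $Q(y)$. This follows by a short induction on $\deg Q$ from relations (9) and (10). Applying the sublemma to the leading contribution $w' Q_{w'} \sigma y_{\alpha_\sigma}$ produces $w \cdot \sigma(Q_{w'}) \cdot y_{\alpha_\sigma}$, modulo terms of the form $w' \cdot (\text{lower polynomial}) \cdot y_{\alpha_\sigma}$. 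Invoking the Coxeter-theoretic identity $R(w) = \{\alpha_\sigma\} \sqcup \sigma(R(w'))$, which holds because $\ell(w'\sigma) = \ell(w') + 1$ forces $w'\alpha_\sigma > 0$, we recognize that $y_{\alpha_\sigma} \cdot \sigma(Q_{w'}) = Q_w$, yielding the required leading term $w Q_w$.

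What remains is to check that every other piece of $\phi_{w'}\phi_\sigma$ assembles into $\sum_{x<w} x P_x(y)$. Three sources contribute: (i) the term $-c_\sigma \phi_{w'}$, every summand of which is indexed by $w'$ or by some $x < w'$, all of which satisfy $x \leq w' < w$; (ii) the lower-degree remainders from the sublemma, again attached to elements $\leq w'$; and (iii) the cross terms $x\sigma \cdot \sigma(P_{w',x})(y) \cdot y_{\alpha_\sigma}$ coming from the $x < w'$ part of $\phi_{w'}$. The main obstacle lies in controlling family (iii): one must verify $x\sigma < w$ whenever $x < w'$. This is the Bruhat-order lifting property: if $x \leq w'$ and $\ell(w'\sigma) = \ell(w') + 1$, a reduced expression for $u \leq w'$ extracted as a subword of a reduced expression for $w'$ extends by $\sigma$ to a subword of the reduced expression $w' \cdot \sigma = w$, so $x\sigma \leq w$; and $x\sigma = w = w'\sigma$ would force $x = w'$ by cancellation, so $x < w'$ gives $x\sigma < w$ strictly. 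Once this is secured, every non-leading contribution has the desired form, completing the induction.
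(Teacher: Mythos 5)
The paper states this lemma with no proof at all---it appears between Remark 7.11 and Theorem 7.13 without an accompanying \verb|proof| environment---so there is nothing in the source to compare your argument against. Your induction on $\ell(w)$ is correct and is the standard way to establish this kind of ``triangularity'' statement for intertwiners (cf.\ Lusztig, Ram): the commutation sublemma $Q(y)\sigma=\sigma\,\sigma(Q)(y)+R(y)$ with $\deg R<\deg Q$ follows from relations (9)--(10) exactly as you say, the identity $R(w'\sigma)=\sigma(R(w'))\sqcup\{\alpha_\sigma\}$ when $\ell(w'\sigma)=\ell(w')+1$ gives the leading product, and your use of the convention $y_{-k}=-y_k$ (so that $y_i+y_j$ and $2y_n$ appear for the roots $\epsilon_i+\epsilon_j$ and $2\epsilon_n$) matches the paper's convention $\zeta_{-k}=-\zeta_k$ from Definition 7.4 and the factor $2y_n$ used in the proof of Theorem 7.13. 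You also correctly identify the one point that actually needs care, namely that the cross terms land on elements $x\sigma<w$; your lifting-property argument handles it, with the only cosmetic wrinkle being that when $\ell(x\sigma)<\ell(x)$ the appended word is not reduced, but then $x\sigma<x\le w'<w$ directly, so the conclusion stands in both cases. In short, your proposal correctly supplies a proof that the paper omits.
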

	
	\begin{thm}
		Let $\zeta$ be a weight of $L$ such that $L_{\zeta} \neq 0$. Let $v$ be a nonzero weight vector in $L_{\zeta}$. Then the set $\{\phi_w v  | w \in  W_0\}$ spans the irreducible representation $L$.
	\end{thm}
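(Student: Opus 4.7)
The plan is to show that $M := \mathrm{span}_{\C}\{\phi_w v : w \in W_0\}$ is a nonzero $H_n(\kappa_1,\kappa_2)$-submodule of $L$. Since $v = \phi_e v$ lies in $M$, once this is established the irreducibility of $L$ will immediately force $M = L$. By the earlier theorem that $\phi_w L_\zeta \subset L_{w.\zeta}$, every $\phi_w v$ is a simultaneous $\Y$-eigenvector of weight $w.\zeta$, so $M$ is automatically $\Y$-stable; the real work is to verify that $M$ is closed under the generators $s_1,\ldots,s_{n-1}$ and $\gamma_n$.

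For $1\le i \le n-1$, I will rewrite the intertwiner relation $\phi_i = s_i(y_i - y_{i+1}) - 1$, applied to the weight vector $\phi_w v$ of weight $w.\zeta$, in the form
\[
s_i\, \phi_w v \;=\; \frac{1}{(w.\zeta)_i - (w.\zeta)_{i+1}}\bigl(\phi_i \phi_w v + \phi_w v\bigr),
\]
valid whenever $(w.\zeta)_i \neq (w.\zeta)_{i+1}$. The braid relations then let me identify $\phi_i \phi_w v$ as a scalar multiple of $\phi_{s_iw} v$: when $\ell(s_i w) > \ell(w)$ this is immediate from $\phi_i \phi_w = \phi_{s_i w}$, and when $\ell(s_i w) < \ell(w)$ I write $w = s_i w'$ in reduced form and compute $\phi_i \phi_w v = \phi_i^2 \phi_{w'} v = (1 - y_i + y_{i+1})(1 + y_i - y_{i+1})\phi_{w'} v$, which is a scalar acting on the weight vector $\phi_{w'} v = \phi_{s_i w} v$. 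Either way $s_i \phi_w v$ lies in $M$. The same strategy handles $\gamma_n$ using $\phi_n = 2\gamma_n y_n - \kappa_2$ and $\phi_n^2 = (\kappa_2 - 2y_n)(\kappa_2 + 2y_n)$, recovering $\gamma_n \phi_w v \in M$ whenever $(w.\zeta)_n \neq 0$.

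The main obstacle will be the degenerate cases in which the inversions above break down, but these are exactly what the earlier theorems of the section are built to exclude. If $(w.\zeta)_i = (w.\zeta)_{i+1}$ for some $1 \le i \le n-1$, the theorem asserting $L_\zeta = 0$ under $\zeta_i = \zeta_{i+1}$ forces $\phi_w v = 0$, hence $s_i \phi_w v = 0 \in M$; similarly, if $(w.\zeta)_n = 0$ and $\kappa_2 \neq 0$, the corresponding theorem kills $\phi_w v$. The remaining exceptional case $(w.\zeta)_n = 0$ with $\kappa_2 = 0$ is precisely where the preceding remark applies: it asserts $\gamma_n \phi_w v \in \C\phi_w v \subset M$ directly. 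Once each case is verified, $M$ is a submodule containing the nonzero vector $v$, and irreducibility yields $M = L$.
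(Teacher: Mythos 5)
Your argument is correct, but it runs in the opposite direction from the paper's. The paper first uses irreducibility together with the factorization $H_n(\kappa_1,\kappa_2)=\C[W_0]\otimes\Y$ to reduce to showing that every $w\cdot v$, $w\in W_0$, lies in $\mathrm{span}\{\phi_x v\}$; it then proves this by induction on $\ell(w)$ using the triangularity statement of Lemma 7.12, $\phi_w = w\prod_{\alpha_{ij}\in R(w)}(y_i-y_j)+\sum_{x<w}xP(y)$, and disposes of the case of a vanishing leading coefficient by locating the first prefix where the product vanishes and invoking Theorems 7.9 and 7.10 to kill $\phi_u v$. You instead show directly that $\mathrm{span}\{\phi_w v\}$ is closed under the generators via the rank-one identities $\phi_i=s_i(y_i-y_{i+1})-1$ and $\phi_n=2\gamma_n y_n-\kappa_2$, together with the fact that $\phi_i^2$ and $\phi_n^2$ act by scalars on weight vectors, and then let irreducibility do the rest. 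Your route is arguably more self-contained: it does not rely on the unproved Lemma 7.12 nor on the implicit claim that $L=\mathrm{span}\{w\cdot v\}$, and it explicitly treats the exceptional case $\kappa_2=0$, $(w.\zeta)_n=0$ via Remark 7.11, which the paper's proof glosses over (Theorem 7.10 requires $\kappa_2\neq 0$). What the paper's approach buys in exchange is the explicit triangular relation between the two spanning sets $\{w\cdot v\}$ and $\{\phi_w v\}$, which is reused implicitly elsewhere (e.g.\ in Theorem 7.14). Both proofs depend on the braid relations of Proposition 7.2 to make $\phi_w$ well defined and to identify $\phi_i\phi_w$ with $\phi_{s_iw}$ up to the scalar $\phi_i^2$.
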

	
	\begin{proof}
		We need to show $w. v$ lies in the span of $\{\phi_w v  | w \in S_n \ltimes (\Z/2\Z)^n\}$ for any arbitrary $w \in S_n \ltimes (\Z/2\Z)^n$. We prove by induction on the length of $w$. When the length of $w$ is zero, the statement is trivial. Now assume for $w$ with $l(w) <k $, the statement holds, i.e. $w. v$ can be expressed by a linear combination of elements in $\{\phi_w v  | w \in W_0\}$. Set $w$ is of length $k$ and $w = s_{i_1} \cdots s_{i_k}$. Then by Lemma 7.12, we have $\phi_w \cdot v= \Pi_{\alpha_{ij} \in R(w)} (\zeta_i - \zeta_j) \cdot w \cdot v+ \Sigma_{x <w} c_x x \cdot v$. Since $l(x) < k$, the terms $x \cdot v$ can be express by $\{\phi_w v  | w \in S_n \ltimes (\Z/2\Z)^n\}$. As long as the coefficient $ \Pi_{\alpha_{ij} \in R(w)}(\zeta_i - \zeta_j) \neq 0$, $w \cdot v$ can be express by $\{\phi_w v  | w \in W_0\}$. So it is reduced to consider only the case when $ \Pi_{\alpha_{ij} \in R(w)}(\zeta_i - \zeta_j) = 0$.\\
		In this case, there exists $p \in [1,k]$ such that $\Pi_{\alpha_{ij} \in R(s_{i_{p+1}} \cdots s_{i_k})}(\zeta_i - \zeta_j) \neq 0$ and $\Pi_{\alpha_{ij} \in R(s_{i_p} \cdots s_{i_k})}(\zeta_i - \zeta_j) = 0$. Set $u = s_{i_{p+1}} \cdots s_{i_k}$. When $i_p \in [1, n-1]$, this implies $(y_{i_p} - y_{i_{p + 1  }})\phi_u v = 0$ and hence $\phi_u v = 0$ by theorem 4.1. And when $i_p = n$, this implies $2y_n \phi_u v = 0$ and hence $\phi_u v = 0$ by theorem 4.2. It follows $\Pi_{a_{ij} \in R(u)}(\zeta_i-\zeta_j)u. v= \sum_{x<u}xP(y)v$ and hence $\Pi_{a_{ij} \in R(u)}w. v= \sum_{x<u}s_{i_1}\cdots s_{i_p}xP(y)$. Since $l(s_{i_1}\cdots s_{i_p}x)<k$, then $(s_{i_1}\cdots s_{i_p}x) . v$ and hence $w . v$ can be expressed by a linear combination of elements in $\{\phi_w v  | w \in W_0\}$.
	\end{proof}
	
	\begin{thm}
		Let $\zeta$ be a weight such that $L_{\zeta} \neq 0$. Let $ w \neq 1 \in W_0$ such that $w. \zeta = \zeta$. Then $\phi_w v = 0$.
	\end{thm}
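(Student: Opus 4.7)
My plan is to select any reduced decomposition of $w$ and pinpoint a single step in the intertwiner product where the underlying weight space is forced to vanish by Theorems~7.9 and~7.10. Fix a reduced word $w = s_{i_1} \cdots s_{i_m}$ and set
\[
\eta_k := (s_{i_{k+1}} \cdots s_{i_m}). \zeta \qquad\text{for } k = 0, 1, \ldots, m,
\]
so that $\eta_m = \zeta$, $\eta_0 = w.\zeta = \zeta$, and $\eta_{k-1} = s_{i_k}.\eta_k$. By Theorem~7.5 the partial product $\phi_{i_{k+1}} \cdots \phi_{i_m} v$ lies in $L_{\eta_k}$, so it suffices to exhibit an index $k$ at which $\phi_{i_k}$ annihilates the whole of $L_{\eta_k}$, because then $\phi_w v = \phi_{i_1} \cdots \phi_{i_{k-1}} \cdot 0 = 0$.

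Next I would use $w.\zeta = \zeta$ to rewrite $\eta_k = (s_{i_1} \cdots s_{i_k})^{-1}.\zeta$, and then check the combinatorial equivalence
\[
\eta_{k-1} = \eta_k \iff s_{\beta_k}.\zeta = \zeta, \qquad \beta_k := s_{i_1} \cdots s_{i_{k-1}}(\alpha_{i_k}),
\]
noting that the $\beta_k$ for $k = 1, \ldots, m$ run through the inversion set $R(w)$ appearing in Lemma~7.12. Let $\Phi_\zeta := \{\alpha \in R_{n+} : s_\alpha.\zeta = \zeta\}$ and $W_\zeta := \{u \in W_0 : u.\zeta = \zeta\}$. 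Then $W_\zeta = \langle s_\alpha : \alpha \in \Phi_\zeta\rangle$ is a Coxeter subgroup of $W_0$ with positive system $\Phi_\zeta$, and its length function is the restriction to $\Phi_\zeta$ of the $W_0$-inversion count (standard reflection subgroup theory). Hence every non-identity element of $W_\zeta$ has at least one inversion in $\Phi_\zeta$, and applied to our $w$ this produces an index $k$ with $s_{i_k}.\eta_k = \eta_k$.

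For this $k$, the fixed-point condition unwinds to $(\eta_k)_{i_k} = (\eta_k)_{i_k+1}$ when $i_k < n$, and to $(\eta_k)_n = 0$ when $i_k = n$. Theorem~7.9 then forces $L_{\eta_k} = 0$ in the first case, and Theorem~7.10 forces $L_{\eta_k} = 0$ in the second case provided $\kappa_2 \neq 0$. The one remaining situation $i_k = n$ with $\kappa_2 = 0$ is handled by a direct one-line computation: $\phi_n v' = 2\gamma_n y_n v' = 2\gamma_n((\eta_k)_n v') = 0$ for every $v' \in L_{\eta_k}$. In all cases $\phi_{i_k}$ kills the partial product $\phi_{i_{k+1}} \cdots \phi_{i_m} v$, and therefore $\phi_w v = 0$.

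The hard part will be the reflection-subgroup input $R(w) \cap \Phi_\zeta \neq \emptyset$ for $w \in W_\zeta \setminus \{1\}$. One can either cite the general theorem of Dyer on reflection subgroups of Coxeter groups, or, since $W_0$ is of type $C_n$, give an \emph{ad hoc} argument: $W_\zeta$ decomposes as an explicit direct product of symmetric groups (one factor for each value repeated among the non-zero $|\zeta_i|$), a hyperoctahedral factor attached to the indices with $\zeta_i = 0$, and a hyperoctahedral factor for each pair of indices with $\zeta_i = -\zeta_j$; a short verification in this product shows that any non-identity element must have an inversion lying in $\Phi_\zeta$.
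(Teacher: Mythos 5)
Your proof is correct in outline and takes a genuinely different route from the paper. The paper's own argument is short and word-based: it asserts that some prefix $s_{i_1}\cdots s_{i_p}$ of a reduced word for $w$ equals a transposition $(h\,m)$ with $\zeta_h=\zeta_m$, left-multiplies $\phi_w$ by $\phi_{i_{p-1}}\cdots\phi_{i_1}$, and invokes the factorization $\phi_i^2=(1-y_i+y_{i+1})(1+y_i-y_{i+1})$ to force the product to vanish. You instead track the intermediate weights $\eta_k=(s_{i_1}\cdots s_{i_k})^{-1}.\zeta$ along the word, reduce the problem to finding an inversion of $w$ lying in $\Phi_\zeta=\{\alpha\in R_{n+}: s_\alpha.\zeta=\zeta\}$, and then kill the corresponding weight space outright via Theorems 7.9 and 7.10. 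What your approach buys is completeness in type $C_n$: the stabilizing reflection need not be of the form $s_{\epsilon_h-\epsilon_m}$ (i.e.\ $\zeta_h=\zeta_m$) --- it can be $s_{\epsilon_i+\epsilon_j}$ (when $\zeta_i=-\zeta_j$) or $s_{2\epsilon_i}$ (when $\zeta_i=0$) --- and your argument handles all three cases uniformly, including the $\kappa_2=0$, $\eta_n=0$ edge case by the direct computation $\phi_n v'=2\gamma_n y_n v'=0$, which the paper does not address. The price is the reflection-subgroup input you flag: that $W_\zeta$ is generated by the reflections it contains (Steinberg's fixed-point theorem, which still applies to complex $\zeta$ by passing to real and imaginary parts) and that a non-identity element of the subgroup generated by $\Phi_\zeta$ inverts some root of $\Phi_\zeta\cap R_{n+}$; both are standard and citing Dyer (or Steinberg plus the elementary chamber argument) is legitimate. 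One small correction to your ad hoc sketch: the stabilizer factor attached to a pair of indices with $\zeta_i=-\zeta_j\neq 0$ is generated by the single reflection $s_{\epsilon_i+\epsilon_j}$ (more generally a symmetric-group factor on all indices carrying the values $\pm c$), not a hyperoctahedral factor; only the indices with $\zeta_i=0$ contribute a hyperoctahedral factor. This does not affect the argument, since all you need is that $W_\zeta$ is the reflection subgroup generated by $\Phi_\zeta$.
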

	\begin{proof}
		Let $w=s_{i_1}\cdots s_{i_k}$. since $w . \zeta=\zeta$, there is $1 \leq p \leq k$ such that $s_{i_1} \cdots s_{i_p} = (h m)$ where $\zeta_h=\zeta_m$. Consider $\phi_{i_{p-1}}\cdots \phi_{i_1} \phi_w v = \Pi_{1 \leq j \leq p} (1-\zeta_{i_j}+\zeta_{i_j+1})(1+\zeta_{i_j}-\zeta_{i_j+1}) \phi_u v$. It follows $\phi_u v =0$ and hence $\phi_w v=0$.
	\end{proof}
	
	\begin{cor}
		Let $\zeta$ be a weight such that $L_{\zeta} \neq 0$. Then it follows $dim(L_{\zeta}) = 1$.
	\end{cor}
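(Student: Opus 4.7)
The plan is to bootstrap directly off Theorem 7.13 and Theorem 7.14, which together already contain essentially all the content needed; the corollary is really a packaging of those two results together with the intertwining property (Theorem 7.6).

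First, fix a nonzero weight vector $v \in L_{\zeta}$. By Theorem 7.13, the set $\{\phi_w v \mid w \in W_0\}$ spans the whole irreducible representation $L$. Since $L$ is $\Y$-semisimple, we have the decomposition $L = \bigoplus_{\eta} L_{\eta}$, and by Theorem 7.6 each operator $\phi_w$ sends $L_{\zeta}$ into $L_{w \cdot \zeta}$. Thus $\phi_w v$ lies in $L_{\zeta}$ if and only if $w \cdot \zeta = \zeta$, and so projecting the spanning set onto $L_{\zeta}$ gives
\[
L_{\zeta} = \mathrm{span}_{\C}\{\phi_w v \mid w \in W_0, \ w \cdot \zeta = \zeta\}.
\]

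Next, I apply Theorem 7.14: for any $w \neq 1$ in $W_0$ with $w \cdot \zeta = \zeta$, one has $\phi_w v = 0$. Therefore the only potentially nonzero contribution to the spanning set above comes from $w = 1$, for which $\phi_1 v = v$. This shows $L_{\zeta} = \C v$, so $\dim(L_{\zeta}) \leq 1$. Combined with the hypothesis $L_{\zeta} \neq 0$, this gives $\dim(L_{\zeta}) = 1$.

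There is no real obstacle here beyond bookkeeping. The only subtle point worth checking is the convention that $\phi_w$ for $w = 1$ (the empty reduced word) is the identity operator, so that $\phi_1 v = v$ genuinely lies in the spanning set; this is immediate from Definition 7.3 with $m = 0$. All the heavy lifting has already been done: Theorem 7.13 guarantees that weight vectors in $L_{\zeta}$ can be reached by intertwiners applied to $v$, and Theorem 7.14 rules out all nontrivial stabilizer elements as producing new independent vectors.
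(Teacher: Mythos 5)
Your proof is correct and is exactly the derivation the paper intends: the corollary is stated immediately after Theorems 7.13 and 7.14 with no separate argument, and your combination of the spanning set from Theorem 7.13, the weight-shifting property $\phi_w L_{\zeta} \subset L_{w\cdot\zeta}$ from Theorem 7.6, and the vanishing of $\phi_w v$ for nontrivial stabilizer elements from Theorem 7.14 is the intended bookkeeping. No gaps.
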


	\begin{prop}
		\begin{enumerate}
			\item Let $v$ be a nonzero weight vector of weight $\zeta$ such that $|\zeta_i -\zeta_{i+1}| = 1$. Then $\phi_i v =0$.\\
			\item Let $v$ be a nonzero weight vector of weight $\zeta$ such that $\zeta_n = \pm \frac{\kappa_2}{2}$. Then $\phi_n v =0$.
		\end{enumerate}
	\end{prop}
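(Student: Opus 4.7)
The strategy for both parts is the same: combine the squared-intertwiner identities from Section 7.2 with Theorem 7.14 (spanning by intertwiners), Theorem 7.15 (stabilizer elements annihilate), and Corollary 7.16 (one-dimensionality of weight spaces) to force the intertwiner to vanish by contradiction.

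Consider part (1). By Theorem 7.5, $\phi_i v \in L_{s_i . \zeta}$, and by Corollary 7.16 this weight space is at most one-dimensional; if it is zero we are done, so suppose it is spanned by some $v_{\zeta'}$, where $\zeta' = s_i . \zeta$, and write $\phi_i v = c\, v_{\zeta'}$. Assume for contradiction $c \neq 0$, and similarly write $\phi_i v_{\zeta'} = c'\, v$. The hypothesis $|\zeta_i - \zeta_{i+1}| = 1$ combined with $\phi_i^2 = (1 - y_i + y_{i+1})(1 + y_i - y_{i+1})$ gives $\phi_i^2 v = 0 = c c'\, v$, forcing $c' = 0$, i.e., $\phi_i v_{\zeta'} = 0$. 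Now apply Theorem 7.14 to $v_{\zeta'}$: the span of $\{\phi_w v_{\zeta'} : w \in W_0\}$ equals $L$, and by weight tracking the contributors to its $L_\zeta$-component are exactly those $w \in s_i \, \mathrm{Stab}_{W_0}(\zeta')$. I would then show each such $\phi_w v_{\zeta'} = 0$, splitting by whether $l(w) = l(s_i w) + 1$ or $l(w) = l(s_i w) - 1$: in the first case the reduced expression $w = s_i \cdot (s_i w)$ gives $\phi_w = \phi_i \phi_{s_i w}$, which vanishes either trivially (for $s_i w = 1$, reducing to $\phi_i v_{\zeta'} = 0$) or by Theorem 7.15 applied to $s_i w \in \mathrm{Stab}(\zeta') \setminus \{1\}$; in the second case $\phi_{s_i w} = \phi_i \phi_w$, so Theorem 7.15 gives $\phi_i \phi_w v_{\zeta'} = 0$, and writing $\phi_w v_{\zeta'} = a v$ and re-applying $\phi_i$ yields $a c\, v_{\zeta'} = 0$, forcing $a = 0$. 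Thus every contributing term vanishes, contradicting $v \neq 0$. Hence $c = 0$ and $\phi_i v = 0$.

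Part (2) proceeds by the same template with $\phi_n$, $\gamma_n$, and $\zeta' = \gamma_n . \zeta$ replacing $\phi_i$, $s_i$, and $s_i . \zeta$: the identity $\phi_n^2 = (\kappa_2 - 2y_n)(\kappa_2 + 2y_n)$ combined with $\zeta_n = \pm \kappa_2 / 2$ kills $\phi_n^2 v$, after which the contradiction argument via Theorems 7.14 and 7.15 is identical. The main obstacle is the length-dichotomy step: making sure $\phi_w v_{\zeta'} = 0$ for \emph{every} $w$ in $s_i \, \mathrm{Stab}(\zeta')$ (resp. $\gamma_n \, \mathrm{Stab}(\zeta')$), including the case where the natural factorization is not a reduced expression. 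This requires the braid relations of Proposition 7.2 together with Theorem 7.15 to be applied uniformly, and is where the bookkeeping is most delicate.
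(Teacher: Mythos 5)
The paper states this proposition with no proof at all (it is followed immediately by the remark on the type $A$ analogue), so there is no argument of the paper's to compare yours against; your proposal has to stand on its own, and it does. The chain is sound: $|\zeta_i-\zeta_{i+1}|=1$ and $\phi_i^2=(1-y_i+y_{i+1})(1+y_i-y_{i+1})$ give $\phi_i^2v=0$; assuming $\phi_i v=c\,v_{\zeta'}\neq 0$ with $\zeta'=s_i.\zeta$ then forces $\phi_i v_{\zeta'}=0$; and applying the spanning theorem at $v_{\zeta'}$, every $\phi_w v_{\zeta'}$ that lands in $L_{\zeta}$ (i.e.\ $w\in s_i\,\mathrm{Stab}_{W_0}(\zeta')$) is killed either by the stabilizer-annihilation theorem or by the relation $\phi_i\phi_w v_{\zeta'}=ac\,v_{\zeta'}=0$, so $L_{\zeta}=0$, contradicting $v\neq 0$. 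Since a spanning set of weight vectors must contain a nonzero vector in each nonzero weight space, the contradiction is genuine.

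Two small points. First, your references are shifted by one: in the paper's numbering the spanning statement is Theorem 7.13, stabilizer annihilation is Theorem 7.14, and one-dimensionality of weight spaces is Corollary 7.15 (the proposition itself is 7.16); none of these depends on the proposition, so there is no circularity, and the content you invoke is unambiguous. Second, the ``delicate bookkeeping'' you worry about in the length dichotomy is not actually an obstacle: for any $w\in W_0$ and simple reflection $s_i$ exactly one of $l(s_iw)=l(w)\pm 1$ holds, and in either case Matsumoto's theorem together with the braid relations of Proposition 7.2 yields the factorization $\phi_w=\phi_i\phi_{s_iw}$ or $\phi_{s_iw}=\phi_i\phi_w$; there is no residual ``non-reduced'' case. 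For part (2) the one degenerate situation is $\kappa_2=0$, $\zeta_n=0$, where $\gamma_n.\zeta=\zeta$ and the template's $\zeta'\neq\zeta$ fails --- but there $\phi_n v=2\gamma_n y_n v=0$ outright, so nothing is lost. With these remarks your sketch fills a gap the paper leaves open.
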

	\begin{rmk}
		Some similar results also happen in degenerate affine Hecke algebra of type $A_{n-1}$. Let $H_n(1)$ be the degenerate affine Hecke algebra generated by $s_i (i = 1, \cdots n-1)$ and $y_i(i = 1 \cdots n)$ with the following relations:
		\begin{align*}
			&s_i^2 = 1, i = 1, \cdots, n - 1,\\
			&s_i s_j = s_j s_i, |i - j| > 1,\\
			&s_i s_{i + 1} s_i = s_{i + 1} s_i s_{i + 1}, i = 1, \cdots, n - 1,\\
			&y_i y_j = y_j y_i,\\
			&s_i y_i - y_{i + 1} s_i = 1,\\
			&s_i y_j = y_j s_i, j \neq i, i+1.
		\end{align*}
		There is the same definition of $\Y$-semisimple representation. And for any $\Y$-semisimple representation $M$, if a weight $\zeta$ with $\zeta_i = \zeta_{i+1}$, then $M_{\zeta} = 0$.\\
		Furthermore, we still could define the intertwining operator $\phi = s_i y_i - y_i s_i$, then we will also have $\phi_i^2  = (1- y_i + y_{i + 1})(1 + y_i - y_{i + 1})$. This also implies the fact that if $\phi_i v_{\zeta} = 0$ then we have $\zeta_i - \zeta_{i + 1} = \pm 1$. For the double affine Hecke algebra of type $A$, \cite{SV} explored similar properties in details.
		
	\end{rmk}
	

	\section{Combinatorial moves}
	
	
	\subsection{Moves among standard tableaux}~\\
	Let $\tab$ denote the collection of standard tableaux indexing the basis of $F(V^{\xi})$ in section 5. We define a set of moves $\m_1, \cdots,\m_n$ on $\tab \sqcup \{\0\}$ as follows. The move $\m_i$ for $i-1, \cdots, n-1$ is defined as
	$$
	\m_i(T) =
	\begin{cases}
		T', &T' \text{ is a standard tableau}\\
		\0, &\text{ otherwise, }
	\end{cases}
	$$
	where $T'(k)=T(s_i(k))$.
	The move $\m_n$ is defined to be 
	$$
	\m_n \cdot T =
	\begin{cases}
		\0, & \ci(n) \leq max(p,q) \text{ and } \cj(n) \leq max(a,b)\\
		T'', &\text{ otherwise,}
	\end{cases}
	$$
	where $T''(j) =T(j)$ for each $j \neq n$ and $T''(n) = (N - \ci(n)+ 1, a+b-\cj(n)+1)$.
	\begin{rmk}
		There is a easy observation.  For any shape $\varphi' \in D(\varphi^{\xi}_{n,p,\mu})$ and any $i \leq min(p,q)$, the sum of the column number of the last cell of the $i$-th row and the column number of the last cell of the $(N-i+1)$-th row equal $a+b$. So $ T''(n)=(N-\ci(n)+1, a+b -\cj(n)+1)$ means that the $\m_n$-move takes the cell filled by $n$ to the end of the $(N-\ci(n)+1)$-th row.
	\end{rmk}
	here $m$ be the column number of the last cell of the $(N-\ci(n)+1)$-th row of $Im(T)$.
	
	\subsection{Correspondence between algebraic actions and combinatorial moves}~\\
	Let $v_T$ denote the basis vector indexed by $T \in \tab$ and $\zeta_T$ denote the weight of $v_T$, i.e. $\zeta_T=-cont_T+\s$.
	
	\begin{prop}
		\begin{enumerate}
			\item For $i=1, \cdots, n-1$, if $\m_i(T) \neq \0$ holds, then $\m_i(T) \in \tab$ and the common eigenbasis vector $v_{\m_i(T)}$ is of weight $\zeta_{\m_i(T)}=s_i. \zeta_T$.
			\item If $\m_n(T) \neq \0$, then $\m_n(T) \in \tab$ and the common eigenbasis vector $v_{\m_n(T)}$ is of weight $\zeta_{\m_i(T)}=\gamma_n.\zeta_T$
		\end{enumerate}
	\end{prop}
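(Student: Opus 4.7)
The plan is, in each case, to check three things: (a) that the underlying shape $\mathrm{Im}(\mathfrak{m}_\bullet(T))$ belongs to $D(\varphi^{\xi}_{n,p,\mu})$; (b) that the result is genuinely a standard tableau, so it indexes an element of $\tab$; and (c) that the weight of the associated basis vector transforms correctly under $s_i$ or $\gamma_n$. For (c) I will use the content formula $\zeta_T(k) = -\mathrm{cont}_T(k) + \mathfrak{s}$ from the weight theorem, together with the $W_0$-action $(s_i\cdot\zeta)(k) = \zeta(s_i(k))$ and $(\gamma_n\cdot\zeta)(n) = -\zeta(n)$, $(\gamma_n\cdot\zeta)(k) = \zeta(k)$ for $k \neq n$.

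For part (1), the key observation is that $\mathfrak{m}_i$ moves no cells: it only swaps the labels attached to $T^{-1}(i)$ and $T^{-1}(i+1)$. Thus $\mathrm{Im}(\mathfrak{m}_i(T)) = \mathrm{Im}(T) \in D(\varphi^{\xi}_{n,p,\mu})$ for free, standardness is the very condition encoded in the hypothesis $\mathfrak{m}_i(T) \neq \mathfrak{0}$, and the contents simply swap at positions $i$ and $i+1$, yielding $\zeta_{\mathfrak{m}_i(T)} = s_i\cdot \zeta_T$.

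For part (2), the cell containing $n$ is always a corner of $\mathrm{Im}(T)$ because $n$ is the largest label and $T$ is standard. The hypothesis $\mathfrak{m}_n(T) \neq \mathfrak{0}$ forces $\mathfrak{i}(n) > \max(p,q)$ or $\mathfrak{j}(n) > \max(a,b)$, and I will argue that in either case the passage $T \mapsto T''$ corresponds to either a forward or an inverse $\gamma$-move on the underlying shape, so that $\mathrm{Im}(T'') \in D(\varphi^{\xi}_{n,p,\mu})$. Standardness of $T''$ then follows because the labels $1,\dots,n-1$ are untouched and $n$ is placed at the new corner created by the move. For weights, since only the $n$-label moves, it suffices to verify the content transformation at $n$: a direct computation gives $\mathrm{cont}_{T''}(n) = (a+b-\mathfrak{j}(n)+1) - (N - \mathfrak{i}(n)+1) = (a+b)-N-\mathrm{cont}_T(n)$, and the identity $a+b-N = 2\mathfrak{s}$, which drops out of the definitions $a=\mu q+\tfrac{|\xi|+n}{N}$, $b=-\mu p+\tfrac{|\xi|+n}{N}$, $\mathfrak{s}=\tfrac{|\xi|+n}{N}+\tfrac{\mu q-\mu p}{2}-\tfrac{N}{2}$, then yields $\zeta_{T''}(n) = -\zeta_T(n) = (\gamma_n\cdot\zeta_T)(n)$.

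The main obstacle is the combinatorial bookkeeping in part (2): showing that $\mathrm{Im}(T'')$ really lies in $D(\varphi^{\xi}_{n,p,\mu})$. By definition $D(\varphi^{\xi}_{n,p,\mu})$ consists of all shapes reachable from $\varphi^{\xi}_{n,p,\mu}$ by iterated $\gamma$-moves, so I need a case analysis: when $\mathfrak{i}(n) > q$ the cell being removed is necessarily one that was added by an earlier $\gamma$-move, so reversing that move is legal and lands us in $D(\varphi^{\xi}_{n,p,\mu})$; when $\mathfrak{j}(n) > \max(a,b)$ the cell sits in the upper-right region where the forward $\gamma$-move is applicable. A careful inspection of the definitions of the shapes in $D(\varphi^{\xi}_{n,p,\mu})$ and of corners of such shapes should make this routine, but it is where the bulk of the argument lives.
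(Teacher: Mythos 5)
Your proposal is correct and follows essentially the same route as the paper: for $\m_i$ the shape is unchanged and the contents at $i,i+1$ swap, and for $\m_n$ one computes $\mathrm{cont}_{T''}(n)=(a+b)-N-\mathrm{cont}_T(n)=2\s-\mathrm{cont}_T(n)$, hence $\zeta_{T''}(n)=-\zeta_T(n)$, which is exactly the calculation in the paper. The one piece you defer as ``routine bookkeeping'' --- that $\mathrm{Im}(T'')$ lies in $D(\varphi^{\xi}_{n,p,\mu})$ via a forward or inverse $\gamma$-move --- is not addressed in the paper's proof at all, so your outline is, if anything, more complete on that point.
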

	\begin{proof}
		First, for $i=1, \cdots, n-1$, if $\m_i(T) \neq \0$, then by the definition of the move $\m_i$, $T \in \tab$ and we want to show $\zeta_{\m_i(T)}=s_i.\zeta_T$.
		
		Then let us consider the case when $w = \gamma$. In this case $w$ moves the box filled by $n$ in the $\ci$-th row of  tableau $T$ to the end of the $(N-\ci+1)$-th row. So the only box in the new tableau $\gamma . T$ with a different position comparing with the tableau $T$ is the box filled by $n$. Thus the only difference in the new weight associated to $\gamma . T$ comparing with $\zeta_T$ is the eigenvalue of $y_n$. Let $(\ci , \cj)$ denote the coordinates of the box filled by $n$ in the tableau $T$. Then the coordinates of the box filled by $n$ in the new tableau $\gamma . T$ is $(N - \ci + 1, \mu(q-p) + 2 \dfrac{|\xi| + n}{N} - \cj + 1)$. Then the eigenvalue of $y_n$ in the new weight $\zeta_{\gamma . T}$ associated to $\gamma . T$ is $\cj - \ci -\dfrac{|\xi|+n}{N}+\dfrac{N}{2}+\dfrac{\mu(p-q)}{2}$. So the new weight equals $\gamma . \zeta_T$.\\

	\end{proof}

	\begin{prop}
		If $w . T \neq 0$ for some $w \in W_0$, then $\phi_w v_T \neq 0$.
	\end{prop}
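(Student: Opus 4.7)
The plan is to induct on the length $\ell(w)$ in $W_0$, with the base case $w = 1$ trivial. For the inductive step, fix a reduced expression $w = s_{i_1} s_{i_2} \cdots s_{i_k}$ (with the convention $s_n = \gamma_n$), so that $\phi_w = \phi_{i_1} \phi_{w'}$ where $w' = s_{i_2} \cdots s_{i_k}$. Since $w.T \neq \0$ is defined by applying the combinatorial moves in sequence, the intermediate tableau $w'.T$ is itself nonzero, and the inductive hypothesis yields $\phi_{w'} v_T \neq 0$. The theorem that $\phi_{w'}$ sends $L_{\zeta_T}$ into $L_{w'.\zeta_T}$, combined with the preceding proposition identifying this as the weight space of $v_{w'.T}$, together with one-dimensionality of weight spaces, gives $\phi_{w'} v_T = c \cdot v_{w'.T}$ for some nonzero scalar $c$. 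The problem therefore reduces to showing $\phi_{i_1} v_{w'.T} \neq 0$.

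For $i_1 \in \{1, \ldots, n-1\}$, I would invoke the lemma stating $\phi_i v_\zeta \neq 0$ whenever $\zeta_i - \zeta_{i+1} \neq \pm 1$. By the content formula for weights, $\zeta_{w'.T}(i_1) - \zeta_{w'.T}(i_1+1) = cont_{w'.T}(i_1+1) - cont_{w'.T}(i_1)$, and this equals $\pm 1$ precisely when the cells filled by $i_1$ and $i_1+1$ in $w'.T$ are adjacent in the same row or column. But the hypothesis $\m_{i_1}(w'.T) \neq \0$ means the label-swap yields a standard tableau, which fails exactly when those two cells are adjacent in the same row or column. Hence the content difference is not $\pm 1$, and $\phi_{i_1} v_{w'.T} \neq 0$.

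For $i_1 = n$, I would invoke the parallel lemma for $\phi_n$: it suffices that $\zeta_{w'.T}(n) \neq \pm \kappa_2/2 = \pm(p-q-\mu N)/2$. Writing $\zeta_{w'.T}(n) = -(\cj(n) - \ci(n)) + \s$ with $\s = \frac{|\xi|+n}{N} + \frac{\mu(q-p)}{2} - \frac{N}{2}$, and unpacking the definitions $a = \mu q + \frac{|\xi|+n}{N}$ and $b = -\mu p + \frac{|\xi|+n}{N}$, a direct computation matches $\zeta_{w'.T}(n) = \pm \kappa_2/2$ with the two-sided boundary condition $\ci(n) \leq \max(p,q)$ and $\cj(n) \leq \max(a,b)$. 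This is precisely when $\m_n$ returns $\0$, so $\m_n(w'.T) \neq \0$ forces $\zeta_{w'.T}(n) \neq \pm \kappa_2/2$ and hence $\phi_n v_{w'.T} \neq 0$, completing the induction.

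The main obstacle is the $i_1 = n$ case, where one must verify that the algebraic condition $\zeta_n = \pm \kappa_2/2$ matches the geometric condition defining when $\m_n$ kills a tableau. The computation is not difficult but requires carefully threading together the definitions of $a$, $b$, the shift $\s$, and the rectangular layout of $\varphi^{\xi}_{n,p,\mu}$, by which the cells in the protected region were arranged so that their contents land exactly at the forbidden values $\pm \kappa_2/2$. Once this equivalence is established, the induction closes uniformly across the $1 \leq i_1 \leq n$ cases.
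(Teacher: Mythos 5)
Your proposal is correct and follows essentially the same route as the paper: the paper simply asserts that it suffices to check the single generators $s_i$ and $\gamma_n$, and then runs exactly your contrapositive argument via $\phi_i^2=(1-y_i+y_{i+1})(1+y_i-y_{i+1})$ and $\phi_n^2=(\kappa_2-2y_n)(\kappa_2+2y_n)$, matching the algebraic vanishing conditions to the combinatorial conditions under which $\m_i$ or $\m_n$ returns $\0$. Your explicit induction on $\ell(w)$ (using one-dimensionality of weight spaces to identify $\phi_{w'}v_T$ with a nonzero multiple of $v_{w'.T}$) fills in the reduction that the paper leaves implicit, but the mathematical content is the same.
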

	\begin{proof}
		It is enough to verify the statement when $w$ is the transposition $s_i$ or $\gamma_n$.\\
		First, consider the case when $w = s_i$, $i=1, \cdots,n-1$. Suppose $\phi_i  v_T = 0$ for some $1 \leq i \leq n-1$ implies that $\phi_i^2  v_T = 0$ and $\phi_i ^2 = (1 - y_i + y_{i+1})(1 + y_i - y_{i+1})$. Then $\zeta_T(i) - \zeta_T(i + 1) = \pm1$. In this case the contents of boxes filled by $i$ and $i + 1$ differ by $1$ and hence the two boxes are adjacent and in the same row or in the same column. We have $s_i . T = 0$ in this case. This contradicts the condition. So we have $\phi_i  v_T \neq 0$.\\
		Second, consider the case when $w = \gamma_n$. Suppose $\phi_n  v_T = 0$ which implies the eigenvalue of $y_n$ is $\pm \frac{\kappa_2}{2}$. Since $\phi_n^2  v_T = 0$ in this case and $\phi_n ^2 = (\kappa_2 - 2y_n)(\kappa_2 + 2y_n)$. Then the box filled by $n$ is either $(p, \mu q + \frac{|\xi|+n}{N})$ or $(q, -\mu p + \frac{|\xi|+n}{N})$. But by the definition of action of $\gamma_n$ on the tableau $T$, we have in both cases that $\gamma_n . T = 0$. This contradicts the condition. Hence we have that $\phi_n v_T \neq 0$.
	\end{proof}
	
	\begin{rmk}
		\begin{enumerate}
			\item If $\m_i(T) \neq \0$, then $\phi_i v_T=c v_{\m_i(T)}$ up to a nonzero scalar $c \in \C$ for $i=1, \cdots, n$.
			\item If $\m_i(T) = \0$, then $\phi_i v_T=0$ for $i=1, \cdots, n$.
		\end{enumerate}
	\end{rmk}
	
	\begin{eg}
		In example 5.9, the action of intertwining operators are as follows. The diagonals give the eigenvalue of $y_i$'s.
		\begin{center}
			\begin{tikzpicture}[scale=0.4]
				\begin{scope}[shift={(-10,16)}]
					\draw [thin,fill=gray!20] (0,2) rectangle (2,3);
					\draw [thin,fill=gray!20] (0,1) rectangle (1,2);
					\draw[step=1] (0,0) grid (2,3);
					\draw (0.5, 0.5) node[red] {$2$};
					\draw (1.5, 1.5) node[blue] {$1$};
					\draw (1.5, 0.5) node[black!75] {$3$};
					\draw [dotted,blue] (-1,4) -- (3, 0);
					\draw [dotted,red] (-1,2) -- (2, -1);
					\draw [dotted,black!75] (-1,3) -- (3, -1);
					\draw (3.3, 0) node[blue]{\scriptsize $\frac{1}{2}$};
					\draw (2.3, -1.3) node[red]{\scriptsize $\frac{5}{2}$};
					\draw (3.3, -1.3) node[black!75]{\scriptsize $\frac{3}{2}$};
				\end{scope}
				
				\draw [->](-9,14)--(-9,12);
				\draw (-9.5,13) node {$\m_1$};
				\draw [->](-7, 17.5) --(-5, 17.5);
				\draw (-6,18) node {$\m_3$};
				\draw [->](0, 17.5) --(2, 17.5);
				\draw (1,18) node {$\m_2$};
				\draw [->](8, 17.5) --(10, 17.5);
				\draw (9,18) node {$\m_3$};
				\draw [->](8, 9.5) --(10, 9.5);
				\draw (9,10) node {$\m_3$};
				\draw [->](0, 1.5) --(2, 1.5);
				\draw (1,2) node {$\m_1$};
				
				\begin{scope}[shift={(-10,8)}]
					\draw [thin,fill=gray!20] (0,2) rectangle (2,3);
					\draw [thin,fill=gray!20] (0,1) rectangle (1,2);
					\draw[step=1] (0,0) grid (2,3);
					\draw (0.5, 0.5) node[blue] {$1$};
					\draw (1.5, 1.5) node[red] {$2$};
					\draw (1.5, 0.5) node[black!75] {$3$};
					\draw [dotted,red] (-1,4) -- (3, 0);
					\draw [dotted,blue] (-1,2) -- (2, -1);
					\draw [dotted,black!75] (-1,3) -- (3, -1);
					\draw (3.3, 0) node[red]{\scriptsize $\frac{1}{2}$};
					\draw (2.3, -1.3) node[blue]{\scriptsize $\frac{5}{2}$};
					\draw (3.3, -1.3) node[black!75]{\scriptsize $\frac{3}{2}$};
				\end{scope}
				
				\draw [->](-7, 9.5) --(-5, 9.5);
				\draw (-6,10) node {$\m_3$};
				
				\begin{scope}[shift={(-4,8)}]
					\draw [thin,fill=gray!20] (0,2) rectangle (2,3);
					\draw [thin,fill=gray!20] (0,1) rectangle (1,2);
					\draw[step=1] (0,1) grid (2,3);
					\draw (2,2) rectangle (3,3);
					\draw (0,0) rectangle (1,1);
					\draw (0.5, 0.5) node[blue] {$1$};
					\draw (1.5, 1.5) node[red] {$2$};
					\draw (2.5, 2.5) node[black!75] {$3$};
					\draw [dotted,red] (-1,4) -- (3, 0);
					\draw [dotted,blue] (-1,2) -- (2, -1);
					\draw [dotted,black!75] (1,4) -- (5, 0);
					\draw (2.3, -1) node[blue]{\scriptsize $\frac{5}{2}$};
					\draw (3.3, 0) node[red]{\scriptsize $\frac{1}{2}$};
					\draw (5.3, 0) node[black!75]{\scriptsize -$\frac{3}{2}$};
				\end{scope}
				
				\begin{scope}[shift={(-4,0)}]
					\draw [thin,fill=gray!20] (0,2) rectangle (2,3);
					\draw [thin,fill=gray!20] (0,1) rectangle (1,2);
					\draw[step=1] (0,1) grid (2,3);
					\draw (2,2) rectangle (3,3);
					\draw (0,0) rectangle (1,1);
					\draw (0.5, 0.5) node[blue] {$1$};
					\draw (1.5, 1.5) node[black!75] {$3$};
					\draw (2.5, 2.5) node[red] {$2$};
					\draw [dotted,black!75] (-1,4) -- (3, 0);
					\draw [dotted,blue] (-1,2) -- (2, -1);
					\draw [dotted,red] (1,4) -- (5, 0);
					\draw (2.3, -1) node[blue]{\scriptsize $\frac{5}{2}$};
					\draw (3.3, 0) node[black!75]{\scriptsize $\frac{1}{2}$};
					\draw (5.3, 0) node[red]{\scriptsize -$\frac{3}{2}$};
				\end{scope}
				
				\begin{scope}[shift={(-4,16)}]
					\draw [thin,fill=gray!20] (0,2) rectangle (2,3);
					\draw [thin,fill=gray!20] (0,1) rectangle (1,2);
					\draw[step=1] (0,1) grid (2,3);
					\draw (2,2) rectangle (3,3);
					\draw (0,0) rectangle (1,1);
					\draw (0.5, 0.5) node[red] {$2$};
					\draw (1.5, 1.5) node[blue] {$1$};
					\draw (2.5, 2.5) node[black!75] {$3$};
					\draw [dotted,blue] (-1,4) -- (3, 0);
					\draw [dotted,red] (-1,2) -- (2, -1);
					\draw [dotted,black!75] (1,4) -- (5, 0);
					\draw (2.3, -1) node[red]{\scriptsize $\frac{5}{2}$};
					\draw (3.3, 0) node[blue]{\scriptsize $\frac{1}{2}$};
					\draw (5.3, 0) node[black!75]{\scriptsize -$\frac{3}{2}$};
				\end{scope}

				\begin{scope}[shift={(4,0)}]
					\draw [thin,fill=gray!20] (0,2) rectangle (2,3);
					\draw [thin,fill=gray!20] (0,1) rectangle (1,2);
					\draw[step=1] (0,1) grid (2,3);
					\draw (2,2) rectangle (3,3);
					\draw (0,0) rectangle (1,1);
					\draw (0.5, 0.5) node[red] {$2$};
					\draw (1.5, 1.5) node[black!75] {$3$};
					\draw (2.5, 2.5) node[blue] {$1$};
					\draw [dotted,black!75] (-1,4) -- (3, 0);
					\draw [dotted,red] (-1,2) -- (2, -1);
					\draw [dotted,blue] (1,4) -- (5, 0);
					\draw (2.3, -1) node[red]{\scriptsize $\frac{5}{2}$};
					\draw (3.3, 0) node[black!75]{\scriptsize $\frac{1}{2}$};
					\draw (5.3, 0) node[blue]{\scriptsize -$\frac{3}{2}$};
				\end{scope}
				
				\begin{scope}[shift={(4,16)}]
					\draw [thin,fill=gray!20] (0,2) rectangle (2,3);
					\draw [thin,fill=gray!20] (0,1) rectangle (1,2);
					\draw[step=1] (0,1) grid (2,3);
					\draw (2,2) rectangle (3,3);
					\draw (0,0) rectangle (1,1);
					\draw (0.5, 0.5) node[black!75] {$3$};
					\draw (1.5, 1.5) node[blue] {$1$};
					\draw (2.5, 2.5) node[red] {$2$};
					\draw [dotted,blue] (-1,4) -- (3, 0);
					\draw [dotted,black!75] (-1,2) -- (2, -1);
					\draw [dotted,red] (1,4) -- (5, 0);
					\draw (2.3, -1) node[black!75]{\scriptsize $\frac{5}{2}$};
					\draw (3.3, 0) node[blue]{\scriptsize $\frac{1}{2}$};
					\draw (5.3, 0) node[red]{\scriptsize -$\frac{3}{2}$};
				\end{scope}
				
				\begin{scope}[shift={(4,8)}]
					\draw [thin,fill=gray!20] (0,2) rectangle (2,3);
					\draw [thin,fill=gray!20] (0,1) rectangle (1,2);
					\draw[step=1] (0,1) grid (2,3);
					\draw (2,2) rectangle (3,3);
					\draw (0,0) rectangle (1,1);
					\draw (0.5, 0.5) node[black!75] {$3$};
					\draw (1.5, 1.5) node[red] {$2$};
					\draw (2.5, 2.5) node[blue] {$1$};
					\draw [dotted,red] (-1,4) -- (3, 0);
					\draw [dotted,black!75] (-1,2) -- (2, -1);
					\draw [dotted,blue] (1,4) -- (5, 0);
					\draw (2.3, -1) node[black!75]{\scriptsize $\frac{5}{2}$};
					\draw (3.3, 0) node[red]{\scriptsize $\frac{1}{2}$};
					\draw (5.3, 0) node[blue]{\scriptsize -$\frac{3}{2}$};
				\end{scope}
				
				\begin{scope}[shift={(11,16)}]
					\draw [thin,fill=gray!20] (0,2) rectangle (2,3);
					\draw [thin,fill=gray!20] (0,1) rectangle (1,2);
					\draw[step=1] (0,1) grid (2,3);
					\draw[step=1] (2,2) grid (4,3);
					\draw (1.5, 1.5) node[blue] {$1$};
					\draw (2.5, 2.5) node[red] {$2$};
					\draw (3.5, 2.5) node[black!75] {$3$};
					\draw [dotted,blue] (-1,4) -- (3, 0);
					\draw [dotted,black!75] (2,4) -- (6, 0);
					\draw [dotted,red] (1,4) -- (5, 0);
					\draw (6.3, 0) node[black!75]{\scriptsize -$\frac{5}{2}$};
					\draw (3.3, 0) node[blue]{\scriptsize $\frac{1}{2}$};
					\draw (5.3, 0) node[red]{\scriptsize -$\frac{3}{2}$};
				\end{scope}
				\begin{scope}[shift={(11,8)}]
					\draw [thin,fill=gray!20] (0,2) rectangle (2,3);
					\draw [thin,fill=gray!20] (0,1) rectangle (1,2);
					\draw[step=1] (0,1) grid (2,3);
					\draw[step=1] (2,2) grid (4,3);
					\draw (1.5, 1.5) node[red] {$2$};
					\draw (2.5, 2.5) node[blue] {$1$};
					\draw (3.5, 2.5) node[black!75] {$3$};
					\draw [dotted,red] (-1,4) -- (3, 0);
					\draw [dotted,black!75] (2,4) -- (6, 0);
					\draw [dotted,blue] (1,4) -- (5, 0);
					\draw (6.3, 0) node[black!75]{\scriptsize -$\frac{5}{2}$};
					\draw (3.3, 0) node[red]{\scriptsize $\frac{1}{2}$};
					\draw (5.3, 0) node[blue]{\scriptsize -$\frac{3}{2}$};
				\end{scope}
				\begin{scope}[shift={(11,0)}]
					\draw [thin,fill=gray!20] (0,2) rectangle (2,3);
					\draw [thin,fill=gray!20] (0,1) rectangle (1,2);
					\draw[step=1] (0,1) grid (2,3);
					\draw[step=1] (2,2) grid (4,3);
					\draw (1.5, 1.5) node[black!75] {$3$};
					\draw (2.5, 2.5) node[blue] {$1$};
					\draw (3.5, 2.5) node[red] {$2$};
					\draw [dotted,black!75] (-1,4) -- (3, 0);
					\draw [dotted,red] (2,4) -- (6, 0);
					\draw [dotted,blue] (1,4) -- (5, 0);
					\draw (6.3, 0) node[red]{\scriptsize -$\frac{5}{2}$};
					\draw (3.3, 0) node[black!75]{\scriptsize $\frac{1}{2}$};
					\draw (5.3, 0) node[blue]{\scriptsize -$\frac{3}{2}$};
				\end{scope}
				
				\draw [->] (-2,14)--(-2,12);
				\draw (-2.5,13) node {$\m_1$};
				\draw [->] (6,14)--(6,12);
				\draw (5.5,13) node {$\m_1$};
				\draw [->](13,15)--(13,12);
				\draw (12.5,13) node {$\m_1$};
				
				\draw [->](-2,6)--(-2,4);
				\draw (-2.5,5) node {$\m_2$};
				\draw [->](6,6)--(6,4);
				\draw (5.5,5) node {$\m_2$};
				\draw [->](13, 7)--(13,4);
				\draw (12.5,5.5) node {$\m_2$};
				
			\end{tikzpicture}
		\end{center}
		
	\end{eg}
	
	Let $k$ be the filling of the cell $(q,b)$,we could compute that the eigenvalue of $y_k$ is $-\frac{\kappa_2}{2}$. Similarly, let $k$ be the filling of the cell $(p,a)$, it follows the eigenvalue of $y_k$ is $\frac{\kappa_2}{2}$. Furthermore, $\kappa_2 = p-q -a+b$.

	\section{Irreducible representations}
	\subsection{The image $
		F_{n,p,\mu}(V^{\xi})$ is irreducible}
	
	\begin{lemma}
		Let $\varphi_1$ and $\varphi_2$ be two skew shapes in $D(\varphi)$ with $\varphi_1 \to \varphi_2$. Then there exist standard tableaux $T_1$ and $T_2$ with $Im(T_1) = \varphi_1$ and $Im(T_2)=\varphi_2$ such that $\gamma_n(T_1) = T_2$.
	\end{lemma}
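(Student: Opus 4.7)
The plan is to construct $T_1$ with the entry $n$ placed exactly at the corner that the $\gamma$-move removes, and then let $T_2 := \m_n(T_1)$; the combinatorial rules of $\m_n$ then automatically land $n$ at the cell that the $\gamma$-move adjoins, so $Im(T_2) = \varphi_2$ by design.

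First I would unpack the hypothesis $\varphi_1 \to \varphi_2$: by the definition of the $\gamma$-move there exists a corner $(i_0, j_0) \in \varphi_1$ with $j_0 > \max(a,b)$ and $1 \leq i_0 \leq p$, and $\varphi_2 = \bigl(\varphi_1 \setminus \{(i_0, j_0)\}\bigr) \cup \{(p+q-i_0+1,\, a+b-j_0+1)\}$. Next I would construct $T_1$. Since $(i_0, j_0)$ is a corner of $\varphi_1$, the subset $\varphi_1 \setminus \{(i_0, j_0)\}$ is again a skew shape of size $n-1$. Any skew shape admits at least one standard filling (for instance, by taking any linear extension of the NW-to-SE partial order on its cells), so I fix such a filling with entries $1, \ldots, n-1$ and place $n$ at $(i_0, j_0)$. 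This produces a standard tableau $T_1$ with $Im(T_1) = \varphi_1$, hence $T_1 \in \tab$.

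Finally I would set $T_2 := \m_n(T_1)$. Because $\cj(n) = j_0 > \max(a,b)$, the defining $\0$-case of $\m_n$ fails, so $T_2 \neq \0$, and the formula for $\m_n$ gives $T_2(k) = T_1(k)$ for $k \neq n$ together with $T_2(n) = (N-i_0+1,\, a+b-j_0+1) = (p+q-i_0+1,\, a+b-j_0+1)$, which is precisely the cell adjoined by the $\gamma$-move. Therefore $Im(T_2) = \varphi_2$, and standardness of $T_2$ is automatic: we have only moved the maximal entry $n$ to a corner of the new skew shape, leaving the relative order of all other entries unchanged. Since $\gamma_n$ acts on tableaux via $\m_n$ (by the correspondence established in subsection 8.2), we conclude $\gamma_n(T_1) = T_2$. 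The only step requiring care is verifying the existence of a standard filling of $\varphi_1 \setminus \{(i_0,j_0)\}$, but this is classical, so no serious obstacle arises in executing the plan.
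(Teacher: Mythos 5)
Your proposal is correct and takes essentially the same route as the paper: both arguments place the entry $n$ at the corner removed by the $\gamma$-move, invoke the existence of a standard filling of the remaining cells, and then apply the $\m_n$/$\gamma_n$ move to land $n$ at the adjoined cell. Your version merely spells out the details (existence of the standard filling, failure of the $\0$-case, standardness of $T_2$) that the paper leaves implicit.
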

	
	\begin{proof}
		The $\varphi_1 \to \varphi_2$ implies that $\varphi_2$ is obtained by moving a corner $(i, \varphi_{i})$ of $\varphi_1$ to the end of the $(N-i+1)$-th row of $\varphi_1$. Since $(i, \varphi_1)$ is a corner of $\varphi_1$, there exists a standard tableau $T_1$ such that $(i, \varphi_1)$ is filled by $n$. Applying the $\gamma_n$ move to $T_1$, let $T_2 = \gamma_n(T_1)$. Then $T_2$ is a standard tableau with $Im(T_2)=\varphi_2$.
	\end{proof}
	
	We show in the following the representation of degenerate affine Hecke algebra obtained through Etingof-Freund-Ma functor is irreducible.
	\begin{thm}
		The image $\F_{n,p,\mu}(V^{\xi})$ of a finite dimensional irreducible $\gl_N$-module $V^{\xi}$ under the Etingof-Freund-Ma functor is irreducible.
	\end{thm}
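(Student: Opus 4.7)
The plan is to show that any nonzero submodule $L \subseteq F_{n,p,\mu}(V^{\xi})$ coincides with the whole space. By the reduction in Section~5 (the invariant space is unchanged upon tensoring with a power of $\det$), we may assume $\xi \in P^+_{\geq 0}$. By the weight basis theorem of Section~6, $F_{n,p,\mu}(V^{\xi})$ is $\Y$-semisimple with one-dimensional weight spaces $\C v_T$ for $T \in \tab$, and distinct tableaux give distinct weights $\zeta_T = -\mathrm{cont}_T + \s$. Since $L$ is stable under the commutative subalgebra $\Y$, it decomposes as a direct sum of weight spaces, so $L$ must contain $v_T$ for at least one $T \in \tab$.

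Since each intertwining operator $\phi_i$ (for $i = 1, \ldots, n-1$, together with $\phi_n$ built from $\gamma_n$) lies in $H_n(1, p-q-\mu N)$, the submodule $L$ is closed under all $\phi_i$. By the correspondence between intertwining operators and combinatorial moves established in Section~8, each $\phi_i v_T$ is either zero (when $\m_i(T) = \0$) or a nonzero scalar multiple of $v_{\m_i(T)}$. Hence if $v_T \in L$ and $\m_i(T) \neq \0$, then $v_{\m_i(T)} \in L$. Iterating, $L$ contains $v_{T'}$ for every $T' \in \tab$ reachable from $T$ by a sequence of combinatorial moves, so it suffices to show that the undirected graph on $\tab$ with edges $\{T, \m_i(T)\}$ (for $i = 1, \ldots, n$ whenever $\m_i(T) \neq \0$) is connected.

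Connectivity would be established in two steps. First, within a single skew shape $\varphi \in D(\varphi^{\xi}_{n,p,\mu})$, the standard tableaux of shape $\varphi$ are connected by the moves $\m_1, \ldots, \m_{n-1}$: this is the classical connectivity of the graph of standard tableaux on a fixed skew shape, provable by induction on $n$ by first using swaps of $n$ with $n-1, n-2, \ldots$ to move the entry $n$ to any chosen removable corner and then applying the inductive hypothesis to the tableau obtained by deleting $n$. Second, across different shapes, Lemma~9.1 produces, for each $\gamma$-edge $\varphi_1 \to \varphi_2$ in $D(\varphi^{\xi}_{n,p,\mu})$, a pair of tableaux $T_1, T_2$ related by $\m_n$. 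Since $D(\varphi^{\xi}_{n,p,\mu})$ is by construction generated from $\varphi^{\xi}_{n,p,\mu}$ by iterated $\gamma$-moves, chaining these $\m_n$-steps with intra-shape moves connects any two tableaux in $\tab$.

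The principal obstacle is the interleaving of intra-shape and inter-shape moves: Lemma~9.1 supplies only one specific pair of tableaux realizing each $\gamma$-transition, so one must first apply intra-shape moves to bring the entry $n$ of the current tableau to the corner specified by Lemma~9.1 before invoking $\m_n$. The intra-shape connectivity proved in the first step ensures this is always possible. Once connectivity of the move graph is established, $L$ contains every basis vector $v_{T'}$ for $T' \in \tab$, forcing $L = F_{n,p,\mu}(V^{\xi})$.
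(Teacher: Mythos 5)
Your proposal is correct and follows essentially the same route as the paper: locate a weight vector $v_T$ in any nonzero submodule using $\Y$-semisimplicity and one-dimensionality of weight spaces, then use the correspondence between intertwining operators and the combinatorial moves to reach every other basis vector, with intra-shape connectivity of standard tableaux handled by the symmetric-group moves (the paper's Case 1) and transitions between shapes in $D(\varphi^{\xi}_{n,p,\mu})$ handled by Lemma 9.1 together with the $\gamma$-move (the paper's Case 2). Your write-up is in fact somewhat more careful than the paper's, since you make explicit both the classical induction for intra-shape connectivity and the need to first transport the entry $n$ to the corner specified by Lemma 9.1 before applying $\m_n$.
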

	\begin{proof}
		A basis of $F_{n,p,\mu}(V^{\xi})$ is indexed by $$\mathcal{T} = \{T| T \text{ is a standard tableau and } Im(T) \in D(\varphi_{n,p,\mu}^{\xi})\}.$$ It's obvious to see that the underlying vector space of $F_{n,p,\mu}(V^{\xi})$ is isomorphic to $span_{\C}\{v_T | T \in \mathcal{T}\}$. Let $N$ be a submodule of $F_{n,p,\mu}(V^{\xi})$. Then $N$ contains at least one weight vector of $F_{n,p,\mu}(V^{\xi})$. Let $v_T$ be a weight vector associated to the tableau $T \in \tab$ and the submodule $N$ contains $v_T$.\\
		We show in the following we could get every other weight vector from an arbitrary weight vector $v_T$. We could consider the actions of signed permutations on standard tableaux since the actions of signed permutations on standard tableaux are compatible with the actions of intertwining operators on weight vectors.\\
		Case 1. For any the standard tableau $T'$ with the same shape of the tableau $T$, there exists $w \in S_n$ such that $T' = w . T$. Equivalently $v_{T'} =c \phi_{\omega}v_T$ where $c \in \C$ is nonzero.\\
		Case 2. For standard $T_1$ and $T_2$ with $Im(T_1) \to Im(T_2)$, combining Lemma 8.4 and Case 1, it follows $T_2 = \omega(T_1)$ for some $\omega \in W(BC_n)$ and hence $v_{T_2} =c \phi_{\omega} v_{T_1}$ where $c \in \C$ is nonzero.\\ Furthermore, consider two arbitrary standard tableaux $T_1$ and $T_2$ in $\mathcal{T}$. Let $T$ be a standard tableaux of shape $\varphi$. There is a path $\varphi \to \varphi_1 \to \cdots \to Im(T_1)$ and hence $v_{T_1} = c_1 \phi_{\omega}v_{T_0}$.
	\end{proof}
	\subsection{Irreducible representation associated to a skew shape $\varphi_{n,p,\mu}^{\xi}$}
	Define a representation $L^{\varphi_{n,p,\mu}^{\xi}}$ of $H_n(1,p-q-\mu N)$ as follows. Let the underlying vector space be $span_{\C}\{w_T | T \in \mathcal{T}\}$. The action of $H_n(1, p-q-\mu N)$ is defined by
	\begin{align}
		&y_k w_T = (-cont_T(k)+\s) w_T,\\
		&s_i w_T =\frac{(1-cont_T(i)+cont_T(i+1))w_{s_i(T)}}{cont_T(i) -cont_T(i+1)}+\frac{1}{cont_T(i)-cont_T(i+1)}w_T,\\
		&\gamma_n w_T = \dfrac{(p-q-\mu N -2cont_T(n)) w_{\gamma_n(T)}}{2cont_T(n)}+(p-q-\mu N) \dfrac{1}{2cont_T(n)}w_T.
	\end{align}
	
	\begin{thm}
		The representation $F_{n,p,\mu}(V^{\xi})$ is isomorphic to $L^{\varphi_{n,p,\mu}^{\xi}}$.
	\end{thm}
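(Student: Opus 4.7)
The plan is to construct the isomorphism $\Phi\colon L^{\varphi_{n,p,\mu}^{\xi}} \to F_{n,p,\mu}(V^{\xi})$ by $w_T \mapsto v_T$, after fixing compatible normalizations of the eigenbasis $\{v_T\}$. Both sides have bases indexed by the same set $\mathcal{T}$, so $\Phi$ is automatically a linear bijection; the content is to verify that $\Phi$ intertwines each generator $y_k$, $s_i$ ($1 \leq i \leq n-1$), and $\gamma_n$. The $y_k$-match is immediate from Theorem 6.3: both sides have $y_k$ acting by the scalar $-cont_T(k) + \s$ on the vector indexed by $T$.

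For $s_i$, the key tool is the intertwining identity $\phi_i = s_i(y_i - y_{i+1}) - 1$, which, acting on $v_T$ and using $(y_i - y_{i+1})v_T = (\zeta_T(i) - \zeta_T(i+1)) v_T$, yields
$$
s_i v_T \;=\; \frac{\phi_i v_T + v_T}{\zeta_T(i) - \zeta_T(i+1)}.
$$
The denominator is nonzero by Theorem 7.8. By Theorem 7.6 and the one-dimensionality of weight spaces (Theorem 6.4), $\phi_i v_T = c\, v_{\m_i(T)}$ for some scalar $c$ whenever $\m_i(T) \neq \0$, and $\phi_i v_T = 0$ otherwise; the scalar $c$ is pinned down, up to the normalization of $v_{\m_i(T)}$, by the square identity $\phi_i^2 v_T = (1 - \zeta_T(i) + \zeta_T(i+1))(1 + \zeta_T(i) - \zeta_T(i+1)) v_T$. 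Choosing the normalization of $v_{\m_i(T)}$ so that the resulting scalars match the coefficients in (25) produces the required match, and the edge case $\m_i(T) = \0$ (which forces $|cont_T(i) - cont_T(i+1)| = 1$) is automatic because the numerator of the $w_{s_i(T)}$-term in (25) then vanishes as well.

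For $\gamma_n$, the argument runs in parallel, starting from $\phi_n = 2\gamma_n y_n - \kappa_2$ with $\kappa_2 = p - q - \mu N$. Rearranging gives
$$
\gamma_n v_T \;=\; \frac{\phi_n v_T + \kappa_2 v_T}{2\,\zeta_T(n)},
$$
determining $\phi_n v_T = c'\, v_{\m_n(T)}$ via $\phi_n^2 = (\kappa_2 - 2y_n)(\kappa_2 + 2y_n)$, and normalizing $v_{\m_n(T)}$ so that the coefficients agree with (26). The degenerate case $\m_n(T) = \0$ corresponds to $\zeta_T(n) = \pm \kappa_2/2$, again compatible with the vanishing of the appropriate numerator in (26).

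The main technical obstacle is ensuring that the normalizations of the $v_T$'s can be chosen coherently across all $T \in \mathcal{T}$. Since each weight space is one-dimensional, each $v_T$ is determined up to a scalar; the irreducibility of $F_{n,p,\mu}(V^{\xi})$ from Theorem 9.2, combined with Lemma 8.4 (which connects any two skew shapes in $D(\varphi_{n,p,\mu}^{\xi})$ by $\gamma$-moves between standard tableaux), implies that every $v_T$ is reached from a fixed $v_{T_0}$ by a finite composition of intertwining operators $\phi_i$ and $\phi_n$. This propagates the normalization globally across $\mathcal{T}$, and independence of the resulting normalization from the chain of operators chosen is guaranteed by the braid relations of $\phi_w$ established in Proposition 7.2.
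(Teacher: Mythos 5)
Your proposal is correct and follows essentially the same route as the paper's (very terse) proof: both fix one basis vector, send $v_T \mapsto w_T$, and propagate the normalization through the intertwining operators $\phi_w$, using $\phi_i = s_i(y_i - y_{i+1}) - 1$ and $\phi_n = 2\gamma_n y_n - \kappa_2$ together with the square identities to pin down the off-diagonal coefficients, with irreducibility and the one-dimensionality of weight spaces guaranteeing coherence. One caveat on your final matching step: the diagonal coefficients you derive are $1/(\zeta_T(i)-\zeta_T(i+1)) = -1/(cont_T(i)-cont_T(i+1))$ and $\kappa_2/(2\zeta_T(n))$, and since a diagonal coefficient cannot be changed by rescaling basis vectors it can never literally match the denominators printed in (25)--(26); those formulas as written are inconsistent with (24) and the relation $s_i y_i - y_{i+1}s_i = 1$ (they force $\kappa_1 = -1$), so the discrepancy is a sign/shift typo in the paper (the denominators should be the eigenvalues $\zeta_T$, not the contents), and your derived formulas are the internally consistent ones.
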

	
	\begin{proof}
		Fix a $T \in \mathcal{T}$. Define a map $f:F_{n,p,\mu}(V^{\xi}) \to L^{\varphi_{n,p,\mu}^{\xi}}$ by $$f(v_T) = w_t$$
		and $f(\phi_i v_T) = (1-cont_T(i)+cont_T(i+1)) w_{s_i(T)}$.
	\end{proof}
	
	\section{Combinatorial description}
	In this section, we first discuss some properties of a representation of the degenerate affine Hecke algebra $H_n(1,\kappa_2)$ obtained via the Etingof-Freund-Ma functor, where $\kappa_2=p-q-\mu N$, and then we show that any representation satisfying these properties is the image of some irreducible polynomial representation of $GL_N$ via the Etingof-Freund-Ma functor.
	\subsection{Some facts of $F_{n,p,\mu}(V^{\xi})$}
	Let $F = F_{n,p,\mu}(V^{\xi})$ be a representation $H_n(1, p-q-\mu N)$ obtained through Etingof-Freund-Ma functor and $\zeta = (\zeta_1, \cdots, \zeta_n)$ be weight of $F$ such that $F_{\zeta} \neq 0$. For $i=1, \cdots, n$, if there is an increasing sequence $i=i_0 < i_1< \cdots <i_m \leq n$ such that $|\zeta_{i_k} -\zeta_{k+1}|=1$ for $k=0, \cdots, m-1$ and $\zeta_{i_m}= \pm \frac{\kappa_2}{2}$, then we call the coordinate $\zeta_i$ is fixed. It is easy to observe the following properties.
	\begin{property}
		For $i=1, \cdots, n$, if $|\zeta_{i}| \leq |\frac{\kappa_2}{2}|$, then $\zeta_i$ is fixed, i.e. there is an increasing sequence $i=i_0 < i_1< \cdots <i_m \leq n$ such that $|\zeta_{i_k} -\zeta_{k+1}|=1$ for $k=0, \cdots, m-1$ and $\zeta_{i_m}= \pm \frac{\kappa_2}{2}$.
	\end{property}
	\begin{property}
		The parameter $\kappa_2$ is an integer. If $\kappa_2$ is even, then all $\zeta_i$'s, for $i=1, \cdots, n$, are integers. If $\kappa_2$ is odd, then all $\zeta_i$'s, for $i=1, \cdots, n$ are half integers. 
	\end{property}
	Recall that the the cell $(p,a)$ in $\varphi^{\xi}_{n,p,\mu}$ gives the eigenvalue $\frac{\kappa_2}{2}$ and that the cell $(q,b)$ gives the eigenvalue $-\frac{\kappa_2}{2}$. Then Property 2 follows.\\
	
	In \cite{R2}, Ram explored the facts of weights of a semisimple affine Hecke algebra representation. Now let us explore facts of weights in the degenerate case. Let $L$ be an irreducible and $\Y$-semisimple representation of $H_n(1, \kappa_2)$ satisfying Property 1 and Property 2 above and $\zeta$ be a weight such that $L_{\zeta} \neq 0$. Then $\zeta$ satisfies the following property.
	\begin{prop}
		If there exist $1 \leq i<j \leq n$ such that $\zeta_i = \zeta_j$, then there exist $i<k_1<j$ such that $\zeta_{k_1} = \zeta_i+1$ and $i<k_2<j$ such that $\zeta_{k_2} = \zeta_i-1$.
	\end{prop}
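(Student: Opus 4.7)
The plan is to argue by contradiction using the intertwining operators $\phi_k$ constructed in Section 7, producing a weight of $L$ with two equal consecutive coordinates, in violation of Theorem 7.9. Suppose the conclusion fails; by symmetry (the case in which $\zeta_i-1$ is missing being handled identically), I may assume that no index $k$ with $i<k<j$ satisfies $\zeta_k = \zeta_i + 1$. Writing $c = \zeta_i = \zeta_j$, I first replace $(i,j)$ by a pair with the same properties and minimal gap $j-i$, so that in addition $\zeta_k \neq c$ for every $i<k<j$, and $j-i\geq 2$ by Theorem 7.9.

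The key move, extracted from Lemma 7.7 and Theorem 7.5, is that whenever a weight $\eta$ of $L$ satisfies $|\eta_k - \eta_{k+1}|\neq 1$, the intertwiner $\phi_k$ sends a nonzero weight vector of weight $\eta$ to a nonzero weight vector of weight $s_k\cdot \eta$. The strategy is to slide the coordinate $c$ at position $i$ rightward through positions $i+1, i+2, \ldots, j-1$ by repeated application of such moves; if the slide completes, the resulting weight has $c$ at both positions $j-1$ and $j$, contradicting Theorem 7.9. Under the standing assumption the only possible obstructions are intermediate coordinates equal to $c-1$, since a $c+1$ is excluded by hypothesis and two adjacent equal coordinates cannot occur in a $\Y$-semisimple module.

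When a $c-1$ blocks the motion, I would first push it further right by applying $\phi$ to its right neighbor. This secondary swap is valid unless that neighbor equals $c-2$ or $c$: minimality of $j-i$ excludes $c$, and a $c-2$ obstruction is handled by the same procedure one level down. Property 2 ensures the values encountered lie in a discrete set (all integers or all half-integers), and Property 1 attaches to each of them a chain of step-size-one neighbors terminating at $\pm\kappa_2/2$, bounding the depth of this recursive relocation so that it must terminate after finitely many moves.

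The hardest part will be organizing this recursion cleanly: in particular, verifying that the successive relocations of $c-1$, $c-2$, $\ldots$ coordinates never introduce a new $c+1$ between positions $i$ and $j$ before the contradiction is reached. A natural way to manage the bookkeeping is a double induction, outer on $j-i$ and inner on the multiset of intermediate values, or equivalently a well-founded measure derived from the Property 1 staircase attached to each intermediate coordinate. Once the slide is shown to complete, the forbidden adjacency of two $c$'s emerges and the contradiction with Theorem 7.9 finishes the proof.
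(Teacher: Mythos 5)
Your overall toolkit matches the paper's (Lemma 7.7 to move coordinates with non-unit differences, Theorem 7.9 to forbid adjacent equal coordinates), but the sliding argument as described has a genuine gap at its endpoint. After you reduce to a minimal gap $j-i$ and assume no intermediate coordinate equals $c+1$, the slide does not in general terminate in two adjacent $c$'s: the blocking $c-1$ cannot be pushed past the $c$ sitting at position $j$, since their difference is $1$ and $\phi_{j-1}$ kills the weight vector there. Your claim that ``minimality of $j-i$ excludes $c$'' as the right neighbor of the blocking $c-1$ is false in exactly this boundary case --- minimality only rules out a $c$ \emph{strictly} between positions $i$ and $j$, not the $c$ at position $j$ itself. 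So the procedure gets permanently stuck in the configuration $(\zeta_{j-2},\zeta_{j-1},\zeta_j)=(c,\,c-1,\,c)$ (already the whole problem when $j-i=2$), where both $\phi_{j-2}$ and $\phi_{j-1}$ annihilate the weight vector and no further intertwining move is available; this configuration does not violate Theorem 7.9, so your intended contradiction never materializes.

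Ruling out $(c,\,c\mp1,\,c)$ requires a separate ingredient, which is exactly how the paper handles its $j-i=2$ case: from $\phi_i v=\phi_{i+1}v=0$ and $\phi_k=s_k(y_k-y_{k+1})-1$ one gets $s_i v=\mp v$ and $s_{i+1}v=\pm v$, and the braid relation $s_is_{i+1}s_i=s_{i+1}s_is_{i+1}$ then forces $v=-v$, a contradiction. If you insert this braid-relation computation as the terminal case of your slide, your argument can be completed; note also that your termination/bookkeeping step (that the recursive relocation of $c-1$, $c-2,\dots$ never reintroduces a $c+1$ into the shrinking window between the two $c$'s) is plausible but is asserted rather than proved, whereas the paper avoids the issue entirely by inducting on $j-i$ and only ever analyzing the two boundary differences $|\zeta_i-\zeta_{i+1}|$ and $|\zeta_{j-1}-\zeta_j|$.
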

	
	\begin{proof}
		Let $\zeta$ be a weight such that $L_{\zeta} \neq 0$. Suppose there exist $1 \leq i<j\leq n$ such that $\zeta_i =\zeta_j$ and there is no $i<k<j$ such that $\zeta_k=\zeta_i$. We proof by induction on $j-i$.\\ First, if $j-i=1$, then $\zeta_i =\zeta_{i+1}$ which contradicts theorem 7.9.\\
		Second, if $j-i=2$, by Theorem 7.9 and Lemma 7.7, it follows $\zeta_{i+1} = \zeta_i \pm1 = \zeta_{i+1} \pm 1$. Let $v$ be a nonzero weight vector of weight $\zeta$. Proposition 7.16 implies $\phi_i v = \phi_{i+1} v = 0$. Combining the definition of the intertwining operators, it follows $s_i v = \mp v$ and $s_{i+1}v = \pm v$ and hence $$\pm v = s_is_{i+1}s_iv = s_{i+1} s_i s_{i+1}v = \mp v,$$ which is a contradiction. \\
		So the base case of the induction is $j-i=3$. If $\zeta_i \neq  \zeta_{i+1} \pm 1$ or $\zeta_{j-1} \neq \zeta_j \pm 1$. Lemma 7.7 implies the existence of a weight satisfying the condition in the second case which is a contradiction. So it hold $|\zeta_i - \zeta_{i+1}| =1$ and $|\zeta_{j-1} -\zeta_j| =1$. If $\zeta_i = \zeta_{i+1} + 1$ and $\zeta_{j-1} = \zeta_j + 1$, then $k_1 = j-1$ and $k_2=i+1$. Similarly, if $\zeta_i=\zeta_{i+1} -1$ and  $\zeta_{j-1}= \zeta_j -1$, then $k_1=i+1$ and $k_2=j-1$. If $\zeta_i = \zeta_{i+1} \pm 1$ and $\zeta_{j-1} = \zeta_j \mp 1$, then $\zeta_{i+1} = \zeta_{i+2}$ which contradicts theorem 7.9.\\
		Suppose the statement is true for all $i-j<m$, consider the case $j-i = m$.\\
		Case1. If $|\zeta_i - \zeta_{i +1}| \neq 1$ or $|\zeta_{j-1}-\zeta_j| \neq 1$ and let $v$ be a nonzero weight vector of weight $\zeta$, then $\phi_iv$ or $\phi_{j-1}v$ will be a nonzero weight vector of weight $s_i \zeta$ or respectively $s_{j-1} \zeta$ with $s_i \zeta$ or $s_{j-1} \zeta$ has $\zeta_{i+1} =\zeta_j$ or respectively $\zeta_i = \zeta_{j-1}$. Then the $k_1$ and $k_2$ exist by the inductive hypothesis.\\
		Case 2. If $\zeta_i =\zeta_{i+1} \pm 1$ and $\zeta_{j-1} =\zeta_j \mp 1$, this implies $\zeta_{i+1} = \zeta_{j-1}$, the statement still holds by inductive hypothesis.\\
		Case 3. If $\zeta_i=\zeta_{i+1}+1$ and $\zeta_{j-1}=\zeta_j+1$, then $k_1=j-1$ and $k_2=i+1$.\\
		Case 4. If $\zeta_i=\zeta_{i+1} -1$ and $\zeta_{j-1} = \zeta_j -1$, then $k_1=i+1$ and $k_2=j-1$.
		
	\end{proof}
	Next let us explore another fact of $L$.
	\begin{lemma}
		Let $\zeta = [\zeta_1, \cdots, \zeta_n]$ be a weight of $L$ such that $L_{\zeta} \neq 0$ and $\zeta$ satisfies $\zeta_i > \frac{|\kappa_2|}{2}$ for $i=k, \cdots, n$. Then there is weight $$\zeta'=[\zeta_1, \cdots, \zeta_{k-1}, -\zeta_n, -\zeta_{n-1}, \cdots, -\zeta_{k+1}, -\zeta_k]$$ such that $L_{\zeta'} \neq 0$.
	\end{lemma}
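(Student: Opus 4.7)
The plan is to reach $\zeta'$ from $\zeta$ by a single, carefully-chosen product $w \in W_0$ of simple reflections and $\gamma_n$'s, and then use the intertwiner $\phi_w$ to transport a nonzero weight vector of weight $\zeta$ into a nonzero weight vector of weight $\zeta'$. I would factor $w = w_{n-k+1} w_{n-k}\cdots w_{1}$ where
\[
w_{j} \;=\; s_{k+j-1}\, s_{k+j}\cdots s_{n-1}\, \gamma_{n}.
\]
Reading right-to-left, $w_j$ first negates the entry currently in position $n$ and then shuttles it leftward through simple transpositions until it sits in position $k+j-1$. Setting $v^{(0)}$ to be a fixed nonzero vector in $L_\zeta$ and $v^{(j)} = \phi_{w_j} v^{(j-1)}$, a direct check shows that the intermediate weight is
\[
\zeta^{(j)} \;=\; [\zeta_1,\ldots,\zeta_{k-1},\, -\zeta_n,\, -\zeta_{n-1},\, \ldots,\, -\zeta_{n-j+1},\, \zeta_k,\, \zeta_{k+1},\, \ldots,\, \zeta_{n-j}],
\]
and in particular $\zeta^{(n-k+1)} = \zeta'$. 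By Theorem 7.6 it then suffices to verify that no factor in the product expressing $\phi_w$ annihilates the vector on which it acts.

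I would do this in two cases. Every occurrence of $\gamma_n$ happens inside $w_j$ at a moment when position $n$ is occupied by the original coordinate $\zeta_{n-j+1}$. By hypothesis $\zeta_{n-j+1} > |\kappa_2|/2$, so $\zeta_{n-j+1} \neq \pm \kappa_2 /2$, and Lemma 7.8 says $\phi_n$ is nonzero on the corresponding vector. Each simple transposition $s_i$ appearing inside $w_j$ swaps the freshly-negated entry $-\zeta_{n-j+1}$ with some still-positive original entry $\zeta_\ell$ where $\ell \in \{k,k+1,\ldots,n-j\}$. To apply Lemma 7.7 one needs
\[
\bigl|\zeta_\ell -(-\zeta_{n-j+1})\bigr| \;=\; \zeta_\ell + \zeta_{n-j+1} \;\neq\; 1.
\]
By Property 2, $\zeta_\ell$ and $\zeta_{n-j+1}$ are simultaneously integers (when $\kappa_2$ is even) or simultaneously half-integers (when $\kappa_2$ is odd); combined with $\zeta_\ell,\, \zeta_{n-j+1} > |\kappa_2|/2$, a short parity check gives $\zeta_\ell + \zeta_{n-j+1} \geq 2$, so the swap is non-killing.

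The main obstacle is precisely that additive bound: the hypothesis $\zeta_i > |\kappa_2|/2$ is only just strong enough to rule out $\zeta_\ell + \zeta_{n-j+1} = 1$, and without Property 2 one could imagine two half-integer entries summing to exactly $1$, which would force $\phi_i$ to kill the vector. Once this bound is in hand the rest is bookkeeping: the coordinates never touched by the moves stay either in the leftmost block $\{1,\ldots,k-1\}$ or in the already-processed block $\{k,\ldots,k+j-1\}$, so no intertwining relation ever needs to be checked against them, and every remaining untouched positive coordinate in $\{k+j,\ldots,n\}$ still satisfies $\zeta_i > |\kappa_2|/2$, keeping the induction going.
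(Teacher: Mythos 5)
Your proof is correct and follows essentially the same route as the paper: your operator $\phi_{w_{n-k+1}}\cdots\phi_{w_1}$ is exactly the paper's $h=\phi_n(\phi_{n-1}\phi_n)\cdots(\phi_k\phi_{k+1}\cdots\phi_n)$, and the non-vanishing is justified by Lemmas 7.7 and 7.8 in both cases. Your parity argument via Property 2 simply makes explicit the check that the paper leaves implicit.
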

	
	\begin{proof}
		Let $v$ be a nonzero weight vector of $\zeta$. Acting on $v$ by $$h= \phi_n (\phi_{n-1} \phi_n)\cdots(\phi_k \phi_{k+1} \cdots \phi_n),$$ the vector
		$h v \in L_{\zeta'}$ and $h v \neq 0$ by Lemma 7.7 and Lemma 7.8.
	\end{proof}
	\begin{definition}
		Let $\zeta = [\zeta_1, \cdots, \zeta_n]$ be a weight of $L$ such that $L_{\zeta} \neq 0$ and $\zeta$ satisfies the condition: if a coordinate $\zeta_{i} > 0 $, then $\zeta_i$ is fixed, i.e. there exists an increasing sequence $i=i_0<i_1 < \cdots < i_m \leq n$ such that $|\zeta_{i_{k}} -\zeta_{i_{k+1}}| = 1$ and $\zeta_{i_m}= \pm \frac{\kappa_2}{2}$. Then we call $\zeta$ is a minimal weight of $L$.
	\end{definition}
	\begin{prop}
		There exists at least one minimal weight $\zeta = [\zeta_1, \cdots, \zeta_n]$ of $L$ such that $L_{\zeta} \neq 0$.
	\end{prop}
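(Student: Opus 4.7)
The plan is an extremal argument. Since $L$ is finite-dimensional, only finitely many weights $\zeta$ satisfy $L_\zeta \neq 0$, so any complexity measure on the weight support attains its minimum. I would define $F(\zeta) = \sum_{i=1}^n \max(\zeta_i, 0)$ and choose $\zeta^*$ with $L_{\zeta^*} \neq 0$ minimizing $F$; the subclaim is that $\zeta^*$ is minimal.

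Suppose for contradiction that $\zeta^*$ is not minimal, so some $\zeta_{i^*}^* > 0$ is not fixed. By Property 1 this forces $\zeta_{i^*}^* > |\kappa_2|/2$. To contradict the minimality of $F(\zeta^*)$, I would produce a weight $\zeta'$ of $L$ with $F(\zeta') < F(\zeta^*)$ via the following strategy: transport $\zeta_{i^*}^*$ to position $n$ using the swap intertwiners $\phi_j$ for $j<n$, which act as coordinate transpositions and hence preserve $F$, and then apply $\phi_n$ (or more generally Lemma 9.5) to negate it, strictly reducing $F$.

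The main obstacle is the vanishing of intertwiners: by Lemma 7.7, $\phi_j$ vanishes on $v_\zeta$ precisely when $|\zeta_j - \zeta_{j+1}| = 1$, and $\phi_n$ vanishes when $\zeta_n = \pm\kappa_2/2$ or $\zeta_n = 0$. The resolution exploits the following propagation of unfixedness: if $\zeta_j > 0$ is unfixed and $\zeta_{j+1} = \zeta_j \pm 1$, then $\zeta_{j+1}$ must also be unfixed, since otherwise a chain at $j+1$ certifying $\zeta_{j+1}$ fixed would extend to certify $\zeta_j$ fixed by prepending the step $j \to j+1$. The same argument shows every unfixed positive coordinate has value strictly greater than $|\kappa_2|/2$: the borderline case $\zeta_j = |\kappa_2|/2 + 1$, $\zeta_{j+1} = |\kappa_2|/2$ would make $\zeta_j$ fixed immediately. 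Consequently the unfixed positive coordinates of $\zeta^*$ cluster into maximal blocks of consecutive positions with values all exceeding $|\kappa_2|/2$ and pairwise $\pm 1$-adjacent, whose boundaries differ from outside coordinates by amounts $\neq 1$. Each such block can therefore be transported as a unit to a tail $\{k, \ldots, n\}$ using non-vanishing $\phi_j$'s, and Lemma 9.5 then reverses and negates the whole block, producing the desired $\zeta'$.

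The hardest part will be making the block transport rigorous: one has to verify that each intermediate $\phi_j$ is non-vanishing on the specific weight vector being acted on, and that the block structure is preserved or can be reconstituted after each individual swap. A cleaner alternative may be a direct induction on $|\{i : \zeta_i^* > |\kappa_2|/2\}|$, applying Lemma 9.5 iteratively on successive tails rather than moving blocks explicitly.
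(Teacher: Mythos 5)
Your proposal is correct and its technical core coincides with the paper's: the paper also reduces to the coordinates $\zeta_i>\frac{|\kappa_2|}{2}$ that are not fixed (Property 1 disposes of the rest), transports them rightward with the $\phi_j$, and then negates them with the tail-reversal lemma (your ``Lemma 9.5'' is the paper's Lemma 10.2). The non-vanishing of each intermediate $\phi_j$ is justified in the paper by exactly the dichotomy you isolate: the coordinate to the right of the one being moved is either fixed, in which case a difference of $\pm 1$ would extend its certifying chain and contradict unfixedness, or is $<-\frac{|\kappa_2|}{2}$, in which case the difference exceeds $1$ in absolute value; your ``propagation of unfixedness'' is the contrapositive of the first case. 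The one genuine difference is the wrapper: the paper runs the full algorithm, processing the unfixed coordinates from rightmost to leftmost and exhibiting the minimal weight explicitly (which it also wants later, to identify $Im(T_\zeta)$ with the minimal shape), whereas your extremal argument on $F(\zeta)=\sum_i\max(\zeta_i,0)$ only needs to negate the single rightmost unfixed coordinate to reach a contradiction, at the cost of invoking finite-dimensionality of $L$ (which does hold here, since $\{\phi_w v\mid w\in W_0\}$ spans). Your block bookkeeping is dispensable and slightly overstated --- unfixed coordinates need not occupy consecutive positions --- but since only the rightmost one matters for the contradiction, this does not create a gap.
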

	
	\begin{proof}
		Let $\zeta$ be any weight such that $L_{\zeta} \neq 0$. If $0< \zeta_{i} \leq \frac{|\kappa_2|}{2}$, then $\zeta_i$ is fixed since $L$ satisfies Property 1. So it suffices to consider the coordinate $\zeta_{i} > \frac{|\kappa_2|}{2}$. We want to show that starting with any weight $\zeta$ such that $L_{\zeta} \neq 0$, there is an algorithm to obtain a weight $\zeta'$ such that $L_{\zeta'} \neq 0$ and $\zeta'$ satisfies the condition: if a coordinate $\zeta'_i>0$, then $\zeta'_i$ is fixed.  \\
		Suppose $\{\zeta_{r_1}, \zeta_{r_2},\cdots, \zeta_{r_l}\}$ is the collection of all the coordinates such that $\zeta_{r_i}> \frac{|\kappa_2|}{2}$ and $\zeta_{r_i}$ is not fixed, for $1 \leq r_1 < r_2 < \cdots < r_l \leq n$. Let $v$ be a nonzero weight vector of weight $\zeta$. We start with the rightmost coordinate $\zeta_{r_l}$ in this collection. If $r_l \neq n$, there are only the following two cases.\\
		Case 1. There exists an increasing sequence $r_l+1=j_0<j_1 < \cdots < j_l \leq n$ such that $|\zeta_{j_{k+1}} -\zeta_{j_k}| = 1$ and $\zeta_{j_l}= \pm \frac{\kappa_2}{2}$. Then $|\zeta_{r_l}-\zeta_{r_l+1}| \neq 1$, otherwise there is an increasing sequence $r_l=j_{-1}< j_1<j_1 < \cdots < j_l \leq n$ such that $|\zeta_{j_{k+1}} -\zeta_{j_k}| = 1$ and $\zeta_{j_l}= \pm \frac{\kappa_2}{2}$. So $\phi_{r_l} v$ is a nonzero vector of weight $\zeta^{(1)}=s_{r_l}\zeta$.\\
		Case 2. If $\zeta_{r_l+1}<-\frac{|\kappa_2|}{2}$, then $|\zeta_{r_l}-\zeta_{r_l+1}|>1$ and hence $\phi_{r_l} v$ is a nonzero weight vector of weight $\zeta^{(1)}=s_{r_l}\zeta$.\\
		Then we consider $\zeta^{(1)}_{r_l+1}$ and we are in the same situation. Hence we repeat this process for $(n-r_l)$ times and obtain a nonzero weight vector $(\phi_{n-1} \cdots \phi_{r_l+1} \phi_{r_l})v$ of weight $$\zeta^{(n-r_l)}=(s_{n-1}\cdots s_{r_l+1}s_{r_l}) \zeta.$$\\
		Next, we deal with the second rightmost coordinate $\zeta_{r_{l-1}}=\zeta^{(n-r_l)}_{r_{l-1}}$ in the collection above and repeat the process above for $(n-1-r_{l-1})$ times. We obtain a nonzero weight vector $$(\phi_{n-2} \cdots \phi_{r_{l-1}+1} \phi_{r_{l-1}})(\phi_{n-1} \cdots \phi_{r_l+1} \phi_{r_l})v$$ of weight $$\zeta^{(2n-1-r_{l-1} -r_l)}=(s_{n-2}\cdots s_{r_{l-1}+1}s_{r_{l-1}})(s_{n-1} \cdots, s_{r_l+1}s_{r_l})\zeta.$$
		Next, we continue to deal with other coordinates in the collection in the order of $\zeta_{r_{l-2}}, \zeta_{r_{l-3}}, \cdots, \zeta_{r_1}$ and repeat the process for $(n-k-r_k)$ times for the coordinate $\zeta_{r_k}$ for $k=1, \cdots, l$. We obtain a nonzero weight vector 
		$$(\phi_{n-l} \cdots \phi_{r_1+1}\phi_{r_1})(\phi_{n-l+1} \cdots \phi_{r_2+1}\phi_{r_2}) \cdots (\phi_{n-1} \cdots \phi_{r_l+1}\phi_{r_l})v$$
		of weight 
		$$\zeta^{(ln-l(l-1)/2-r_1-r_2 \cdots-r_l)}=(s_{n-l} \cdots s_{r_1+1}s_{r_1})(s_{n-l+1} \cdots s_{r_2+1}s_{r_2})\cdots (s_{n-1} \cdots s_{r_l+1}s_{r_l})\zeta.$$
		The weight $\zeta^{(ln-l(l-1)/2-r_1-r_2 \cdots-r_l)}$ satisfies the condition that $$\zeta^{(ln-l(l-1)/2-r_1-r_2 \cdots-r_l)}_i > \frac{|\kappa_2|}{2}$$ for $i=n-l+1, \cdots, n$. Moreover, for $i=1,\cdots,n-l$, it follows either $$\zeta^{(ln-l(l-1)/2-r_1-r_2 \cdots-r_l)}_i<0$$ or the coordinate $\zeta^{(ln-l(l-1)/2-r_1-r_2 \cdots-r_l)}_i$ is fixed. Combining Lemma 10.2, there is a weight $$\zeta'=\gamma_n (s_{n-1} \gamma_n) \cdots (s_{n-l+1}\cdots s_{n-1}\gamma_n) \zeta^{(ln-l(l-1)/2-r_1-r_2 \cdots-r_l)}$$ such that $L_{\zeta^{(ln-l(l-1)/2-r_1-r_2 \cdots-r_l)}} \neq 0$ and satisfying the condition: if $$\zeta^{(ln-l(l-1)/2-r_1-r_2 \cdots-r_l)}_i >0,$$ then $\zeta^{(ln-l(l-1)/2-r_1-r_2 \cdots-r_l)}_i$ is fixed for any $i=1, \cdots, n$.
	\end{proof}
	\begin{rmk}
		Lemma 10.2 and Proposition 10.4 indicate that for any weight $\zeta$ such that $L_{\zeta} \neq 0$ and a nonzero $v \in L_{\zeta}$, there is a nonzero weight vector $\phi_{\omega} v \in L_{\zeta'}$ such that $\zeta'$ satisfies the condition in Proposition 10.4.
	\end{rmk}
	\begin{eg}
		Let $\zeta=[-2,2,\textcolor{blue}{4},\textcolor{blue}{5},\textcolor{blue}{6},-3,1]$ and $v \in L$ is a nonzero weight vector of weight $\zeta$. Locate the collection of all the coordinates which are positive and not fixed: $\{\zeta_3=4, \zeta_4=5, \zeta_5=6\}$, i.e. there are three coordinates with $r_1=3, r_2=4$ and $r_3=5$. We deal with these coordinates from right to left. First, we deal with the rightmost coordinate $\zeta_5=6$ in this collection and apply the process for $(n-r_3)=2$ times. We obtain a nonzero weight vector $$(\phi_{n-1} \cdots\phi_{r_3})v=(\phi_6 \phi_5)v$$ of weight $$\zeta^{(n-r_3)}=\zeta^{(2)}=(s_6s_5)\zeta=[-2,2,\textcolor{blue}{4},\textcolor{blue}{5},-3,1,\textcolor{blue}{6}].$$
		Then we work on with the coordinate $\zeta_4=\zeta^{(2)}_4=5$ and apply the process for $(n-1-r_2)$ times. We obtain a nonzero weight vector $$(\phi_{n-2}\cdots \phi_{r_2})(\phi_{n-1} \cdots \phi_{r_3})v=(\phi_5\phi_4)(\phi_6\phi_5)v$$
		of weight $$\zeta^{(2n-1-r_1-r_2)}=\zeta^{(4)}=(s_5s_4)\zeta^{(2)}=(s_5s_4)(s_6s_5)\zeta=[-2,2,\textcolor{blue}{4},-3,1,\textcolor{blue}{5},\textcolor{blue}{6}].$$
		Finally, we deal with the coordinate $\zeta_3=\zeta^{(4)}_3=4$ and apply the process for $n-2-r_3$ times. We obtain a nonzero weight vector $$(\phi_{n-3}\cdots\phi_{r_1})(\phi_{n-2} \cdots \phi_{r_2})(\phi_{n-1}\cdots \phi_{r_3})v=(\phi_4\phi_3)(\phi_5\phi_6)(\phi_6\phi_5)v$$ of weight 
		$$\zeta^{(3n-3-r_1-r_2-r_3)}=\zeta^{(6)}=(s_4s_3)\zeta^{(4)}=[-2,2,-3,1,\textcolor{blue}{4},\textcolor{blue}{5},\textcolor{blue}{6}].$$
		Now the weight $\zeta^{(6)}$ satisfies the condition in Lemma 10.2 with $\zeta^{(6)}_i>\frac{|\kappa_2|}{2}$ for $i=5,6,7$. Moreover, for each $i=1,\cdots, 4$, either $\zeta^{(6)}_i<0$ or that $\zeta^{(6)}_i$ is fixed.\\
		Applying Lemma 10.2, we obtain a nonzero weight vector $$\phi_7(\phi_6\phi_7)(\phi_5\phi_6\phi_7)(\phi_4\phi_3)(\phi_5\phi_6)(\phi_6\phi_5)v$$ of weight $$\zeta'=\gamma_7(s_6 \gamma_7)(s_5s_6\gamma_7) \zeta^{(6)}=[-2,2,-3,1,\textcolor{red}{-6},\textcolor{red}{-5},\textcolor{red}{-4}].$$
	\end{eg}
	\begin{eg}
		Let $\zeta = [0,4,-1,6,-2,5,1]$ and $v \in L$ is a nonzero weight vector of weight $\zeta$. There are three coordinates $\zeta_2 =4$, $\zeta_4 =6$ and $\zeta_6=5$ satisfying the condition that $i=2,4,6$, there is no increasing sequence $i<i_1 < \cdots < i_l \leq n$ such that $|\zeta_{i_{k+1}} -\zeta_{i_k}| = 1$ and $|\zeta_{i_l}|= \pm \frac{\kappa_2}{2}$. Starting with the coordinate with maximal index $i=6$ and applying the intertwining operators, it follows
		\begin{center}
			\begin{tikzpicture}[scale=0.4]
				\draw (0,0) node {\tiny{$[0,\textcolor{blue}{4},-1,\textcolor{blue}{6},-2,\textcolor{blue}{5},1]$}};
				\draw (10,0) node {\tiny{$[0,\textcolor{blue}{4},-1,\textcolor{blue}{6},-2,1,\textcolor{blue}{5}]$}};
				\draw (20,0) node {\tiny{$[0,\textcolor{blue}{4},-1,-2,1,\textcolor{blue}{6},\textcolor{blue}{5}]$}};
				\draw (30,0) node {\tiny{$[0,-1,-2,1,\textcolor{blue}{4},\textcolor{blue}{6},\textcolor{blue}{5}]$}};
				\draw [->] (3.5,0)--(6.5,0);
				\draw [->] (13.5,0)--(16.5,0);
				\draw [->] (23.5,0)--(26.5,0);
				\draw (5,0.5) node {\tiny{$s_6$}};
				\draw (15,0.5) node {\tiny{$s_5 s_4$}};
				\draw (25,0.5) node {\tiny{$s_4 s_3 s_2$}};
			\end{tikzpicture}
		\end{center}
		and by Lemma 10.2
		\begin{center}
			\begin{tikzpicture}[scale=0.4]
				\draw (0,0) node {\tiny{$[0,-1,-2,1,\textcolor{blue}{4},\textcolor{blue}{6},\textcolor{blue}{5}]$}};
				\draw (10,0) node {\tiny{$[0,-1,-2,1,\textcolor{red}{-5},\textcolor{blue}{4},\textcolor{blue}{6}]$}};
				\draw (20.5,0) node {\tiny{$[0,-1,-2,1,\textcolor{red}{-5},\textcolor{red}{-6},\textcolor{blue}{4}]$}};
				\draw (31,0) node {\tiny{$[0,-1,-2,1,\textcolor{red}{-5},\textcolor{red}{-6},\textcolor{red}{-4}]$}};
				\draw [->] (3.5,0)--(6.5,0);
				\draw [->] (13.5,0)--(16.5,0);
				\draw [->] (24.5,0)--(27,0);
				\draw (5,0.5) node {\tiny{$s_5 s_6 \gamma_7$}};
				\draw (15,0.5) node {\tiny{$s_6 \gamma_7$}};
				\draw (26,0.5) node {\tiny{$\gamma_7$}};
			\end{tikzpicture}
		\end{center}
		Let $\zeta' = [0,-1,-2,1,\textcolor{red}{-5},\textcolor{red}{-6},\textcolor{red}{-4}]$. Then there is a nonzero weight vector $$\phi_7 (\phi_6 \phi_7)(\phi_5 \phi_6 \phi_7)(\phi_4 \phi_3 \phi_2)(\phi_5 \phi_4)\phi_6 v \in L_{\zeta'}.$$
	\end{eg}
	\begin{rmk}
		For any minimal weight $\zeta$ of $F=F_{n,p,\mu}(V^{\xi})$ such that $F_{\zeta} \neq 0$, let $T_{\zeta}$ be the corresponding standard tableau. Then $Im(T_{\zeta})$ is the minimal shape $\varphi^{\xi}_{n,p,\mu}$ of $F_{n,p,\mu}(V^{\xi})$.
	\end{rmk}
	Before introducing the third property of $F_{n,p,\mu}(V^{\xi})$, we need the following definition and lemma.
	\begin{definition}
		Let $\zeta=[\zeta_1, \cdots, \zeta_n]$ be a weight. If a coordinate $\zeta_i$, $i=1,2,\cdots,n$, satisfies the condition that there is no $i<k \leq n$ such that $\zeta_k=\zeta_i \pm 1$, then the coordinate $\zeta_i$ is a corner of $\zeta$.
	\end{definition}
	\begin{rmk}
		Let $\zeta=[\zeta_1,\cdots, \zeta_n]$ and $T_{\zeta}$ is the corresponding standard tableau. For $i=1,\cdots, n$, $\zeta_i$ is a corner of $\zeta$ if and only if $T(i)$ is a southeastern corner of $Im(T_{\zeta})$.
	\end{rmk}
	\begin{eg}
		Let $\zeta=[0,-1,\textcolor{brown}{-2},\textcolor{brown}{1},-5,\textcolor{brown}{-6},\textcolor{brown}{-4}]$. Then $\zeta_3=-2$, $\zeta_4=1$, $\zeta_6=-6$ and $\zeta_7=-4$ are corners of $\zeta$. The corresponding standard tableau $T_{\zeta}$ has southeastern corners $3,4,6$ and $7$.
		\begin{center}
			\begin{tikzpicture}[scale=0.5]
				\draw (2,1)--(2,-1)--(3,-1)--(3,0)--(5,0)--(5,1)--(6,1)--(6,2)--(7,2)--(7,3)--(5,3)--(5,1)--(2,1);
				\draw [dotted] (2,1) grid (3,-1);
				\draw [dotted] (3,1) grid (5,0);
				\draw [dotted] (5,3) grid (6,1);
				\draw [dotted] (6,3) grid (7,2);
				\draw  (2.5,0.5) node {$1$};
				\draw [dotted] (1,2)--(4,-1);
				\draw (4.5,-1) node {\tiny{$0$}};
				\draw[brown]  (2.5,-0.5) node {$4$};
				\draw  (3.5,0.5) node {$2$};
				\draw[brown]  (4.5,0.5) node {$3$};
				\draw [brown] (5.5,1.5) node {$7$};
				\draw  (5.5,2.5) node {$5$};
				\draw [brown] (6.5,2.5) node {$6$};
			\end{tikzpicture}
		\end{center}
	\end{eg}
	\begin{lemma}
		Let $L$ be an irreducible and $\Y$-semisimple representation of $H_n(1, \kappa_2)$ satisfying Property 1. Let $\zeta$ be a minimal weight of $L$ such that $L_{\zeta} \neq 0$. For $i=1, \cdots, n$, if the coordinate $\zeta_i$ is a corner of $\zeta$, then $\zeta_i= \pm \frac{\kappa_2}{2}$ or $\zeta < -\frac{|\kappa_2|}{2}$.
	\end{lemma}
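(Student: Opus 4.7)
The plan is a short case analysis combining the definition of a corner, the definition of a minimal weight, and Property 1. The crucial bookkeeping observation is that if $\zeta_i$ is a corner of $\zeta$, then by Definition 10.7, no index $k$ with $i<k\leq n$ satisfies $\zeta_k=\zeta_i\pm 1$.

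First, I would pin down when $\zeta_i$ must be fixed. By the hypotheses, $L$ satisfies Property 1 and $\zeta$ is a minimal weight. Property 1 forces $\zeta_i$ to be fixed whenever $|\zeta_i|\leq|\kappa_2|/2$, while the minimality of $\zeta$ (Definition 10.3) forces $\zeta_i$ to be fixed whenever $\zeta_i>0$. Taking the union of these two regions, any coordinate with $\zeta_i\geq -|\kappa_2|/2$ is fixed. Thus the only corners I need to analyze are those with $\zeta_i\geq -|\kappa_2|/2$, since corners with $\zeta_i<-|\kappa_2|/2$ already satisfy the conclusion.

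Next I would unpack what being fixed means for a corner. By definition, there is an increasing sequence $i=i_0<i_1<\cdots<i_m\leq n$ with $|\zeta_{i_k}-\zeta_{i_{k+1}}|=1$ for $k=0,\dots,m-1$ and $\zeta_{i_m}=\pm\tfrac{\kappa_2}{2}$. Suppose $m\geq 1$. Then $i_1>i$ and $\zeta_{i_1}=\zeta_i\pm 1$, which directly contradicts the corner condition. Hence $m=0$, forcing $i_0=i_m$ and therefore $\zeta_i=\zeta_{i_m}=\pm\tfrac{\kappa_2}{2}$.

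Combining these two steps gives the dichotomy in the statement: a corner $\zeta_i$ with $\zeta_i\geq -|\kappa_2|/2$ must equal $\pm\tfrac{\kappa_2}{2}$, while the remaining corners satisfy $\zeta_i<-|\kappa_2|/2$. I do not expect any serious obstacle here; the lemma is essentially a direct corollary of Definitions 10.3 and 10.7 together with Property 1, and the only care needed is to make sure the regions $\zeta_i>0$ and $|\zeta_i|\leq|\kappa_2|/2$ together cover $\zeta_i\geq -|\kappa_2|/2$ so that the two hypotheses truly pin down all corners outside the exceptional range.
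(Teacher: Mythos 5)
Your proposal is correct and follows essentially the same route as the paper: use Property 1 together with minimality to see that every corner with $\zeta_i \geq -\frac{|\kappa_2|}{2}$ is fixed, and then observe that the corner condition forces the fixing sequence to be trivial ($m=0$), so such a corner must equal $\pm\frac{\kappa_2}{2}$. Your handling of the boundary case $\zeta_i=\pm\frac{\kappa_2}{2}$ via the $m=0$ observation is in fact slightly more careful than the paper's two-case contradiction argument.
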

	\begin{proof}
		First, since $L$ satisfies Property 1, if $|\zeta_i|<\frac{|\kappa_2|}{2}$, then $\zeta_i$ is fixed, i.e. there is an increasing sequence $i=i_0 < i_1< \cdots <i_m \leq n$ such that $|\zeta_{i_k} -\zeta_{k+1}|=1$ for $k=0, \cdots, m-1$ and $\zeta_{i_m}= \pm \frac{\kappa_2}{2}$. This contradicts the fact that $\zeta_i$ is a corner of $\zeta$.\\
		Second, suppose $\zeta_i> \frac{|\kappa_2|}{2}$. Since $\zeta$ is a minimal weight, $\zeta_i$ if fixed, which again contradicts the fact that $\zeta_i$ is a corner.
	\end{proof}
	Now we introduce the third property of $F_{n,p,\mu}(V^{\xi})$.
	\begin{property}
		Let $\zeta$ be a minimal weight such that $F_{\zeta} \neq 0$. If $\zeta_k$ is the rightmost coordinate equal to $\frac{|\kappa_2|}{2}$ and $\zeta_r$ is the rightmost coordinate equal to $-\frac{|\kappa_2|}{2}$, then at least one of these two coordinates is not a corner.
	\end{property}
	\begin{proof}
		Let $T_{\zeta}$ be the corresponding standard tableau of weight $\zeta$. Since $\zeta$ is a minimal weight, the shape $Im(T_{\zeta})$ is the minimal shape $\varphi = \varphi_{n,p,\mu}^{\xi}$. So it suffices to show that it is impossible for  $T_{\zeta}$ to have $T_{\zeta}(k)$ and $T_{\zeta}(r)$ at southeastern corners simultaneously, equivalently, it is impossible for $\varphi$ to have a southeast corner at eigenvalue $\frac{\kappa_2}{2}$ and a southeastern corner at eigenvalue $-\frac{\kappa_2}{2}$ simultaneously. Let $p \leq q$, $$a=\mu q+ \dfrac{|\xi|+n}{N}$$ and $$b=-\mu p+ \dfrac{|\xi|+n }{N}.$$ Suppose $\varphi$ simultaneously has a southeast corner at eigenvalue $\frac{\kappa_2}{2}$ and a southeastern corner at eigenvalue $-\frac{\kappa_2}{2}$, then $p<q$ and $a>b$ follow. In this case, $\varphi$ has cell $(p,a)$ at eigenvalue $-\frac{|\kappa_2|}{2}$ and cell $(q,b)$ at eigenvalue $\frac{|\kappa_2|}{2}$. Furthermore, the fact that cell $(p,a)$ is a southeastern corner indicates $\xi^{(2)}_1=\xi_{q+1}=b$. The fact that cell $(q,b) \in \varphi$ indicates $\xi^{(1)}_q=\xi_q < b$. This contradicts $\xi \in P^+_{\geq 0}$.
	\end{proof}
	
	\subsection{Combinatorial description of irreducible representations in $\co$}
	Let $\co(H_n(1,\kappa_2))$ be collection of $\Y$-semisimple representations of $H_n(1, \kappa_2)$ satisfying Properties 1-3. In this subsection, we show that any irreducible representation in $\co(H_n(1, \kappa_2))$ is isomorphic to the image $F_{n, p,\mu}(V^{\xi})$ for a tuple of $n,p,\mu$ and some $\xi \in P^+_{\geq 0}$.
	
	Let $L \in \co(H_n(1, \kappa_2))$ be irreducible and $\zeta$ be a minimal weight such that $L_{\zeta} \neq 0$. Recall, if $\zeta_i \geq 0$, then there is an increasing sequence $k_1< \cdots < k_m$ such that $\zeta_{k_{i+1}} = \zeta_{k_i} \pm 1$ and $\zeta_{k_m} = \pm \frac{\kappa_2}{2}$. The weight $\zeta$ gives a standard tableau $T_{\zeta}$ such that $\zeta_k = -cont_{T_{\zeta}}(k)+s$ for some fixed number $s$ where $s-\kappa_2$ is an integer. Let $Im(T_{\zeta}) = \nu / \beta$ such that $\beta_1<\nu_1$ and $\beta_{\ell(\nu)}<\nu_{\ell(\nu)}$. Let us explore in different cases depending on corners. According to Lemma 10.11, if $\zeta_i$ is a corner of $\zeta$, for some $i=1,\cdots,n$, then $\zeta_i=\pm \frac{\kappa_2}{2}$ or $\zeta_i < -\frac{|\kappa_2|}{2}$. For any minimal $\zeta$, there is at least one corner of $\zeta$. Let the coordinate $\zeta_{r_1}$ be the corner of $\zeta$ such that $\ci(r_1)$ is the maximal of $\{\ci(i)|\zeta_{i} \text{ is corner of } \zeta\}$ and the coordinate $\zeta_{r_2}$ is the corner of $\zeta$ such that $\ci(r_2)$ is the second largest number in $\{\ci(i)|\zeta_{i} \text{ is corner of } \zeta\}$ if $\zeta_{r_2}$ exists. It is obvious $\zeta_{r_2}<\zeta_{r_1}$. There are the following cases. If $\zeta_{r_1}=\frac{|\kappa_2|}{2}$, then $\zeta_{r_2}<-\frac{|\kappa_2|}{2}$ or $\zeta_{r_2}$ doesn't exist. By Lemma 10.11, if $\zeta_{r_1}=\frac{|\kappa_2|}{2}$ and $\zeta_{r_2}=-\frac{|\kappa_2|}{2}$, then $\zeta$ violates Property 3. When $\zeta_{r_1}=-\frac{|\kappa_2|}{2}$, $\zeta_{r_2}<-\frac{|\kappa_2|}{2}$ or there is no $\zeta_{r_2}$. When $\zeta_{r_1}<-\frac{|\kappa_2|}{2}$, $\zeta_{r_2}<-\frac{|\kappa_2|}{2}$ or $\zeta_{r_2}$ doesn't exist. So let us discuss in five cases.\\
	\textbf{Case 1.} The corner $\zeta_{r_1}=\frac{|\kappa_2|}{2}$ and the corner $\zeta_{r_2}<-\frac{|\kappa_2|}{2}$.\\
	Denote $T_{\zeta}(r_1)=(i_1,j_1)$ and $T_{\zeta}(r_2)=(i_2,\nu_{i_2})$. Let $j_2=i_2+s+\frac{|\kappa_2|}{2}$. In this case, set two rectangles $$(a^p)=((\nu_1-j_1)^{i_2})$$ and $$(b^q)=((\nu_1-j_2)^{i_1}).$$
	\begin{claim}
		Following the setting above, the number $\nu_{i_2} - j_1-j_2 \geq 0$.
	\end{claim}
	\begin{proof}
		Since $\zeta_{r_2}$ is a corner, there exists a weight $\tilde{\zeta}$ such that $L_{\tilde{\zeta}} \neq 0$, $Im(T_{\tilde{\zeta}})=Im(T_{\zeta})$ and $T_{\tilde{\zeta}}(n)=(i_2, \nu_{i_2})$, where $T_{\tilde{\zeta}}$ denotes the standard tableau given by the weight $\tilde{\zeta}$. Let $v$ be a nonzero weight vector of weight $\tilde{\zeta}$. Since $\tilde{\zeta}_n \neq \pm \frac{\kappa_2}{2}$, it follows that $\phi_n v$ is a nonzero weight vector of weight $\gamma_n \tilde{\zeta}$. Moreover, the standard tableau $T_{\gamma_n \tilde{\zeta}}$ given by $\gamma_n \tilde{\zeta_n}$ satisfies that $$Im(T_{\gamma_n \tilde{\zeta}})) = Im(T_{\zeta}) \setminus \{(i_2, \nu_{i_2})\} \cup \{(i_1+1, j_1+j_2-\nu_{i_2}+1)\}$$
		since $(\gamma_n \tilde{\zeta})_n=-\tilde{\zeta}_n$. Lemma 10.1 implies $T_{\gamma_n \tilde{\zeta}}$ is a standard tableau and hence $Im(T_{\gamma_n \tilde{\zeta}})$ is a skew shape. This fact forces $j_1+j_2-\nu_{i_2}+1 \leq 1$ and thus $$\nu_{i_2}-j_1-j_2 \geq 0.$$
	\end{proof}
	Set $\xi^{(1)}=(\xi^{(1)}_1,\cdots, \xi^{(1)}_{i_1})$ with $$\xi^{(1)}_k= \beta_k + \nu_1-j_1-j_2,$$
	for $k=1, \cdots, i_1$ and $\xi^{(2)}=(\xi^{(2)}_1, \cdots, \xi^{(2)}_{i_2})$ with 
	$$\xi^{(2)}_k=\nu_1-\nu_{i_2-k+1},$$
	for $k=1, \cdots, i_2$. Furthermore, set $\xi=(\xi_1, \cdots, \xi_{i_1+i_2})$ with $$\xi_k=\xi^{(1)}_k,$$
	for $k=1, \cdots, i_1$ and
	$$\xi_{k}=\xi^{(2)}_{k-i_1},$$
	for $k=i_1+1, \cdots, i_1+i_2$.
	\begin{rmk}
		Claim 10.13 implies the following two facts.
		\begin{enumerate}
			\item It follows $\nu_1-j_1-j_2 \geq 0$.
			\item The inequality $\nu_1-\nu_{i_2}=\xi^{(2)}_1 \leq \xi^{(1)}_{i_1}=\beta_{i_1}+\nu_1-j_1-j_2$ holds and hence $\xi$ is a well-defined Young diagram.
		\end{enumerate}
	\end{rmk}
	\begin{eg}
		Continue Example 10.7. An irreducible representation $L$ in $\co(H_7(1,-2))$, we start with a minimal weight $\zeta=[0,-1,\textcolor{brown}{-2},\textcolor{brown}{1},-5,\textcolor{brown}{-6},\textcolor{brown}{-4}]$ and the standard tableau of $\zeta$. The corners of $\zeta$ are $\zeta_3=-2$, $\zeta_4=1$,$\zeta_6=-6$ and $\zeta_7=-4$. Furthermore, $\zeta_{r_1}=\zeta_4=1$ and $\zeta_{r_2}=\zeta_3=-2$
		\begin{center}
			\begin{tikzpicture}
				\begin{scope}[scale=0.5, shift={(-8,4)}]
					\draw [blue] (2,1)--(5,1)--(5,3);
					\draw [dotted] (2,1) grid (3,-1);
					\draw [dotted] (3,1) grid (5,0);
					\draw [dotted] (5,3) grid (6,1);
					\draw [dotted] (6,3) grid (7,2);
					\draw [draw=none, fill=brown!20] (2,0) rectangle (3,-1);
					\draw [draw=none, fill=brown!20] (4,1) rectangle (5,0);
					\draw [draw=none, fill=brown!20] (5,2) rectangle (6,1);
					\draw [draw=none, fill=brown!20] (6,3) rectangle (7,2);
					\draw  (2.5,0.5) node {$1$};
					\draw  (2.5,-0.5) node {$4$};
					\draw [blue]  (1.5,-0.5) node {\tiny{$i_1$}};
					\draw [blue]  (2.5,3.5) node {\tiny{$j_1$}};
					\draw [blue]  (1.5,0.5) node {\tiny{$i_2$}};
					\draw [blue]  (3.5,3.5) node {\tiny{$j_2$}};
					\draw  (3.5,0.5) node {$2$};
					\draw  (4.5,0.5) node {$3$};
					\draw  (5.5,1.5) node {$7$};
					\draw  (5.5,2.5) node {$5$};
					\draw  (6.5,2.5) node {$6$};
					\draw [draw=blue, fill=gray!50] (2,3) rectangle (5,1);
					\draw [orange!50] (3,1)--(5,-1);
					\draw (5.3,-1) node {\tiny{$-1$}};
					\draw [orange!50] (2,0)--(4,-2);
					\draw (4.2,-2) node {\tiny{$1$}};
					\draw [dotted] (2,3) grid (5,1);
					\draw [orange!50] (4,1)--(6,-1);
					\draw (6.3,-1) node {\tiny{$-2$}};
					\draw [blue] (2,3)--(2,-1)--(3,-1)--(3,0)--(5,0)--(5,1)--(6,1)--(6,2)--(7,2)--(7,3)--(2,3);
				\end{scope}
				\begin{scope}[scale=0.5, shift={(2,6)}]
					\draw [scale=0.5] (5,3) node {\tiny{$s=-2$}};
					\draw [scale=0.5] (5,1.5) node {\tiny{$\nu=(5,4,3,1)$}};
					\draw [scale=0.5] (5,0) node {\tiny{$\beta=(3,3)$}};
					\draw [scale=0.5] (5,-2) node {\tiny{$\nu_1=5$}};
					\draw [scale=0.5] (5,-3.5) node {\tiny{$i_1=4$, $j_1=1$}};
					\draw [scale=0.5] (5,-5) node {\tiny{$i_2=3$, $j_2=2$}};
				\end{scope}
			\end{tikzpicture}
		\end{center}
		Place the southeastern corner of $((\nu_1-j_2)^{i_1})$ at the cell $(i_1,j_1)$ and northeastern corner of ${(\nu_1-j_1)^{i_2}}$ at the cell $(1,\nu_1)$. The gray part on the left forms $\xi^{(1)}$ and the gray part on the right forms \rotatebox[origin=c]{180}{$\xi^{(2)}$}.
		\begin{center}
			\begin{tikzpicture}
				\begin{scope}[scale=0.5, shift={(-10,-4)}]
					\draw [dotted] (2,1) grid (3,-1);
					\draw [dotted] (3,1) grid (5,0);
					\draw [dotted] (5,3) grid (6,1);
					\draw [dotted] (6,3) grid (7,2);
					\draw [draw=none, fill=yellow!20] (2,0) rectangle (3,-1);
					\draw [draw=none, fill=yellow!20] (6,3) rectangle (7,2);
					\draw [draw=none, fill=gray!50] (5,1) rectangle (6,0);
					\draw [draw=none, fill=gray!50] (6,2) rectangle (7,0);
					\draw [draw=none, fill=gray!50] (0,3) rectangle (2,-1);
					\draw [draw=blue, fill=gray!50] (2,3) rectangle (5,1);
					\draw [blue] (2,3)--(2,-1)--(3,-1)--(3,0)--(5,0)--(5,1)--(6,1)--(6,2)--(7,2)--(7,3)--(2,3);
					\draw [red,thick] (0,3) rectangle (3,-1);
					\draw [red,thick] (3,3) rectangle (7,0);
					\draw  (2.5,0.5) node {$1$};
					\draw  (2.5,-0.5) node {$4$};
					\draw [blue] (-0.5,-0.5) node {\tiny{$i_1$}};
					\draw [blue] (2.5,3.5) node {\tiny{$j_1$}};
					\draw [blue] (-0.5,2.5) node {\tiny{$1$}};
					\draw [blue] (6.5,3.5) node {\tiny{$\nu_1$}};
					\draw  (3.5,0.5) node {$2$};
					\draw  (4.5,0.5) node {$3$};
					\draw  (5.5,1.5) node {$7$};
					\draw  (5.5,2.5) node {$5$};
					\draw  (6.5,2.5) node {$6$};
					\draw [orange!50] (3,1)--(5,-1);
					\draw (5.3,-1) node {\tiny{$-1$}};
					\draw [orange!50] (2,0)--(4,-2);
					\draw (4.2,-2) node {\tiny{$1$}};
					\draw [dotted] (2,3) grid (5,1);
					\draw [orange!50] (4,1)--(6,-1);
					\draw (6.3,-1) node {\tiny{$-2$}};
				\end{scope}
				\begin{scope}[scale=0.3, shift={(2,-5)}]
					\draw (5,3) node {\tiny{$(a^p)=(4^3)$}};
					\draw (5,1.5) node {\tiny{$(b^q)=(3^4)$}};
					\draw (5,0) node {\tiny{$\xi^{(1)}=(5,5,2,2)$}};
					\draw (5,-1.5) node {\tiny{$\xi^{(2)}=(2,1,0)$}};
				\end{scope}
			\end{tikzpicture}
		\end{center}
		Furthermore, we obtain other parameters of Etingof-Freund-Ma functor as $N=p+q=7$, $p=3$ and $\mu=\frac{a-b}{N}=\frac{1}{7}$.
		\begin{center}
			\begin{tikzpicture}
				\begin{scope}[scale=0.5, shift={(2,-12)}]
					\draw [blue] (0,3)--(0,-1)--(2,-1)--(2,1)--(5,1)--(5,3)--(0,3);
					\draw [blue] (0,-1)--(0,-3)--(1,-3)--(1,-2)--(2,-2)--(2,-1);
					\draw (2,4) node {\tiny{$\xi=(5,5,2,2,2,1,0)$}};
					\draw (2,2) node {\tiny{$\xi^{(1)}$}};
					\draw (1,-1.7) node {\tiny{$\xi^{(2)}$}};
					\draw [dotted] (0,3) grid (5,1);
					\draw [dotted] (0,1) grid (2,-2);
				\end{scope}
				\begin{scope}[scale=0.5, shift={(-10,-12)}]
					\draw [draw=none, fill=gray!30] (0,3) rectangle (5,1);
					\draw [draw=none, fill=gray!30] (0,1) rectangle (2,-1);
					\draw (2.8,2) node {\tiny{$\xi^{(1)}$}};
					\draw [draw=none, fill=gray!30] (5,1) rectangle (7,0);
					\draw [draw=none, fill=gray!30] (6,2) rectangle (7,1);
					\draw (6.2,0.5) node {\tiny{\rotatebox{180}{$\xi^{(2)}$}}};
					\draw [red] (0,3) rectangle (3,-1);
					\draw [red] (3,3) rectangle (7,0);
					\draw [dotted] (0,3) grid (3,-1);
					\draw [dotted] (3,3) grid (7,0);
					\draw (1.5,3.3) node {\tiny{$b$}};
					\draw (5,3.3) node {\tiny{$a$}};
					\draw (7,3)--(7,3.5);
					\draw (0,3)--(0,3.5);
					\draw (3,3)--(3,3.5);
					\draw[->] (1.2,3.3)--(0,3.3);
					\draw[->] (1.8,3.3)--(3,3.3);
					\draw[->] (4.7,3.3)--(3,3.3);
					\draw[->] (5.3,3.3)--(7,3.3);
					\draw (-0.3,1) node {\tiny{$q$}};
					\draw (7.3,1.5) node {\tiny{$p$}};
					\draw (0,3)--(-0.5,3);
					\draw (0,-1)--(-0.5,-1);
					\draw (7,0)--(7.5,0);
					\draw (7,3)--(7.5,3);
					\draw[->] (-0.3,1.3)--(-0.3,3);
					\draw[->] (-0.3,0.7)--(-0.3,-1);
					\draw[->] (7.3,1.8)--(7.3,3);
					\draw[->] (7.3,1.2)--(7.3,0);
				\end{scope}
			\end{tikzpicture}
		\end{center}
	\end{eg}
	
	\textbf{Case 2.} The corner $\zeta_{r_1}=-\frac{|\kappa_2|}{2}$ and the corner $\zeta_{r_2}<-\frac{|\kappa_2|}{2}$.\\
	Denote $T_{\zeta}(r_1)=(i_1,j_1)$ and $T_{\zeta}(r_2)=(i_2,\nu_{i_2})$. Let $j_2=i_2+s-\frac{|\kappa_2|}{2}$. In this case, set two rectangles $$(a^p)=((\nu_1-j_1)^{i_2})$$ and $$(b^q)=((\nu_1-j_2)^{i_1}).$$
	We have a similar claim to that in Case 1.
	\begin{claim}
		Following the setting above, the number $\nu_{i_2} - j_1-j_2 \geq 0$.
	\end{claim}
	The proof is the same with that in Case 1.\\
	Similarly, let $\xi^{(1)}=(\xi^{(1)}_1,\cdots, \xi^{(1)}_{i_1})$ with $$\xi^{(1)}_k= \beta_k + \nu_1-j_1-j_2,$$
	for $k=1, \cdots, i_1$ and $\xi^{(2)}=(\xi^{(2)}_1, \cdots, \xi^{(2)}_{i_2})$ with 
	$$\xi^{(2)}_k=\nu_1-\nu_{i_2-k+1},$$
	for $k=1, \cdots, i_2$. Furthermore, set $\xi=(\xi_1, \cdots, \xi_{i_1+i_2})$ with $$\xi_k=\xi^{(1)}_k,$$
	for $k=1, \cdots, i_1$ and
	$$\xi_{k}=\xi^{(2)}_{k-i_1},$$
	for $k=i_1+1, \cdots, i_1+i_2$.\\
	\begin{eg}
		Let $L$ be an irreducible representation in $\co(H_7(1,-2))$ with a minimal weight $\zeta=[-1,1,0,-2,\textcolor{brown}{-1},\textcolor{brown}{-5},\textcolor{brown}{-3}]$ and the standard tableau of $\zeta$. The corners of $\zeta$ are $\zeta_4=-6$,$\zeta_6=-4$ and $\zeta_7=-2$. Furthermore, $\zeta_{r_1}=\zeta_5=-1$ and $\zeta_{r_2}=\zeta_7=-3$
		\begin{center}
			\begin{tikzpicture}
				\begin{scope}[scale=0.5, shift={(-8,4)}]
					\draw [blue] (6,3)--(6,2)--(3,2)--(3,1)--(2,1);
					\draw [dotted] (2,1) grid (5,0);
					\draw [dotted] (3,2) grid (6,1);
					\draw [draw=none, fill=brown!20] (5,2) rectangle (6,1);
					\draw [draw=none, fill=brown!20] (6,3) rectangle (7,2);
					\draw [orange!50] (5,2)--(7,0);
					\draw (7.3,0) node {\tiny{$-3$}};
					\draw [orange!50] (4,1)--(6,-1);
					\draw (6.3,-1) node {\tiny{$-1$}};
					\draw [orange!50] (-0.2,3.2)--(2,1);
					\draw (-0.4,3.2) node {\tiny{$1$}};
					\draw [draw=none, fill=brown!20] (4,1) rectangle (5,0);
					\draw  (3.5,1.5) node {$1$};
					\draw  (4.5,1.5) node {$4$};
					\draw  (2.5,0.5) node {$2$};
					\draw  (3.5,0.5) node {$3$};
					\draw  (5.5,1.5) node {$7$};
					\draw  (4.5,0.5) node {$5$};
					\draw [blue] (0.5,0.5) node {\tiny{$i_1$}};
					\draw [blue] (4.5,3.5) node {\tiny{$j_1$}};
					\draw [dotted] (1,3) grid (2,0);
					\draw [blue] (0.5,1.5) node {\tiny{$i_2$}};
					\draw [blue] (1.5,3.5) node {\tiny{$j_2$}};
					\draw  (6.5,2.5) node {$6$};
					\draw [draw=none, fill=gray!50] (2,3) rectangle (6,2);
					\draw [draw=none, fill=gray!50] (2,2) rectangle (3,1);
					\draw [blue] (2,3)--(2,0)--(5,0)--(5,1)--(6,1)--(6,2)--(7,2)--(7,3)--(2,3);
				\end{scope}
				\begin{scope}[scale=0.5, shift={(3,6)}]
					\draw [scale=0.5] (5,3) node {\tiny{$s=-1$}};
					\draw [scale=0.5] (5,1.5) node {\tiny{$\nu=(5,4,3)$}};
					\draw [scale=0.5] (5,0) node {\tiny{$\beta=(4,1,0)$}};
					\draw [scale=0.5] (5,-3.5) node {\tiny{$i_1=3$, $j_1=3$}};
					\draw [scale=0.5] (5,-5) node {\tiny{$i_2=2$, $j_2=0$}};
				\end{scope}
			\end{tikzpicture}
		\end{center}
		Place the southeastern corner of $(b^q)$ at the cell $(i_1,j_1)$ and northeastern corner of $(a^p)$ at the cell $(1,\nu_1)$. The gray part on the left forms $\xi^{(1)}$ and the gray part on the right forms \rotatebox[origin=c]{180}{$\xi^{(2)}$}.
		\begin{center}
			\begin{tikzpicture}
				\begin{scope}[scale=0.5, shift={(-10,-4)}]
					\draw [blue] (6,3)--(6,2)--(3,2)--(3,1)--(2,1);
					\draw [dotted] (2,1) grid (5,0);
					\draw [dotted] (3,2) grid (6,1);
					\draw [draw=none, fill=yellow!20] (6,3) rectangle (7,2);
					\draw [draw=none, fill=yellow!20] (4,1) rectangle (5,0);
					\draw  (3.5,1.5) node {$1$};
					\draw  (4.5,1.5) node {$4$};
					\draw  (2.5,0.5) node {$2$};
					\draw  (3.5,0.5) node {$3$};
					\draw  (5.5,1.5) node {$7$};
					\draw  (4.5,0.5) node {$5$};
					\draw  (6.5,2.5) node {$6$};
					\draw [blue] (-0.5,0.5) node {\tiny{$i_1$}};
					\draw [blue] (4.5,3.5) node {\tiny{$j_1$}};
					\draw [blue] (-0.5,2.5) node {\tiny{$1$}};
					\draw [blue] (6.5,3.5) node {\tiny{$\nu_1$}};
					\draw [draw=none, fill=gray!50] (2,3) rectangle (6,2);
					\draw [draw=none, fill=gray!50] (2,2) rectangle (3,1);
					\draw [draw=none, fill=gray!50] (0,3) rectangle (2,0);
					\draw [draw=none, fill=gray!50] (6,2) rectangle (7,1);
					\draw [orange!50] (5,2)--(7,0);
					\draw (7.3,0) node {\tiny{$-3$}};
					\draw [orange!50] (4,1)--(6,-1);
					\draw (6.3,-1) node {\tiny{$-1$}};
					\draw [orange!50] (-0.5,3.5)--(2,1);
					\draw (-0.7,3.5) node {\tiny{$1$}};
					\draw [dotted] (0,3) grid (2,0);
					\draw [dotted] (2,3) grid (5,2);
					\draw [blue] (2,3)--(2,0)--(5,0)--(5,1)--(6,1)--(6,2)--(7,2)--(7,3)--(2,3);
					\draw [red,thick] (0,3) rectangle (5,0);
					\draw [red,thick] (5,3) rectangle (7,1);
				\end{scope}
				\begin{scope}[scale=0.3, shift={(2,-5)}]
					\draw (5,3) node {\tiny{$(a^p)=(2^2)$}};
					\draw (5,1.5) node {\tiny{$(b^q)=(5^3)$}};
					\draw (5,0) node {\tiny{$\xi^{(1)}=(6,3,2)$}};
					\draw (5,-1.5) node {\tiny{$\xi^{(2)}=(1,0)$}};
				\end{scope}
			\end{tikzpicture}
		\end{center}
		Furthermore, we obtain other parameters of Etingof-Freund-Ma functor as $N=q+p=5$, $q=3$ and $\mu=\frac{b-a}{N}=\frac{3}{5}$.
		\begin{center}
			\begin{tikzpicture}
				\begin{scope}[scale=0.5, shift={(2,-12)}]
					\draw [blue] (0,3)--(6,3)--(6,2)--(3,2)--(3,1)--(2,1)--(2,0)--(0,0)--(0,3);
					\draw [blue] (0,0)--(0,-1)--(1,-1)--(1,0);
					\draw (3.5,3.5) node {\tiny{$\xi=(6,3,2,1,0)$}};
					\draw (1.5,2) node {\tiny{$\xi^{(1)}$}};
					\draw (0.5,-0.6) node {\tiny{$\xi^{(2)}$}};
					\draw [dotted] (0,3) grid (6,2);
					\draw [dotted] (0,2) grid (2,0);
				\end{scope}
				\begin{scope}[scale=0.5, shift={(-10,-12)}]
					\draw [draw=none,fill=gray!30] (0,3) rectangle (6,2);
					\draw [draw=none,fill=gray!30] (0,2) rectangle (3,1);
					\draw [draw=none,fill=gray!30] (0,1) rectangle (2,0);
					\draw [draw=none,fill=gray!30] (6,2) rectangle (7,1);
					\draw [red] (0,3) rectangle (5,0);
					\draw [red] (5,3) rectangle (7,1);
					\draw [dotted] (0,3) grid (5,0);
					\draw [dotted] (5,3) grid (7,1);
					\draw (2.5,3.3) node {\tiny{$b$}};
					\draw (6,3.3) node {\tiny{$a$}};
					\draw (7,3)--(7,3.5);
					\draw (0,3)--(0,3.5);
					\draw (5,3)--(5,3.5);
					\draw[->] (2.2,3.3)--(0,3.3);
					\draw[->] (2.8,3.3)--(5,3.3);
					\draw[->] (5.7,3.3)--(5,3.3);
					\draw[->] (6.3,3.3)--(7,3.3);
					\draw (-0.3,1.5) node {\tiny{$q$}};
					\draw (7.3,2) node {\tiny{$p$}};
					\draw (0,3)--(-0.5,3);
					\draw (0,0)--(-0.5,0);
					\draw (7,1)--(7.5,1);
					\draw (7,3)--(7.5,3);
					\draw[->] (-0.3,1.8)--(-0.3,3);
					\draw[->] (-0.3,1.2)--(-0.3,0);
					\draw[->] (7.3,2.3)--(7.3,3);
					\draw[->] (7.3,1.7)--(7.3,1);
					\draw (1.5,2) node {\tiny{$\xi^{(1)}$}};
					\draw (6.5, 1.5) node {\tiny{\rotatebox{180}{$\xi^{(2)}$}}};
				\end{scope}
			\end{tikzpicture}
		\end{center}
	\end{eg}
	\textbf{Case 3.} The corner $\zeta_{r_1}=\frac{|\kappa_2|}{2}$ and the corner $\zeta_{r_2}$ doesn't exist. Let $j=s+\frac{|\kappa_2|}{2}$. Then the cell $(0,j)$ on the diagonal of eigenvalue $-\frac{|\kappa_2|}{2}$. We explore the following in two subcases.\\
	\textbf{Case 3a.} $j \geq 1$. Set two rectangles $$(a^p)=(j^1)$$ and $$(b^q)=(\nu_1^{\ell(\nu)+1}).$$ Moreover, $\xi=(\xi_1, \cdots, \xi_{\ell(\nu)})$ with $\xi_1=\nu_1+j$ and $\xi_k=\beta_{k-1}$.\\
	\begin{eg}
		Let $L$ be an irreducible representation in $\co(H_7(1,-2))$ with a minimal weight $\zeta=[-1,2,1,0,3,2,\textcolor{brown}{1}]$ such that $L_{\zeta} \neq 0$. There is only one corner $\zeta_{7}=1$. So $$\zeta_{r_1}=\zeta_7=1=\frac{|\kappa_2|}{2}.$$
		The standard tableau of $\zeta$ is as follows.
		\begin{center}
			\begin{tikzpicture}
				\begin{scope}[scale=0.5, shift={(-8,4)}]
					\draw [blue] (2,3)--(2,2)--(0,2);
					\draw [draw=none, fill=brown!20] (2,1) rectangle (3,0);
					\draw [orange!50] (2,1)--(4,-1);
					\draw (4.2,-1) node {\tiny{$1$}};
					\draw [orange!50] (0,5)--(2,3);
					\draw (-0.2,5) node {\tiny{$-1$}};
					\draw  (2.5,2.5) node {$1$};
					\draw  (2.5,1.5) node {$4$};
					\draw  (0.5,1.5) node {$2$};
					\draw  (1.5,1.5) node {$3$};
					\draw  (2.5,0.5) node {$7$};
					\draw  (0.5,0.5) node {$5$};
					\draw  (1.5,0.5) node {$6$};
					\draw [draw=blue, fill=gray!50] (0,3) rectangle (2,2);
					\draw [blue] (0,3) rectangle (3,0);
					\draw [dotted] (0,4) grid (3,0);
					\draw [blue] (1.5,4.5) node {\tiny{$j$}};
					\draw [blue] (-0.5,3.5) node {\tiny{$0$}};
				\end{scope}
				\begin{scope}[scale=0.5, shift={(2,6)}]
					\draw [scale=0.5] (5,3) node {\tiny{$s=1$}};
					\draw [scale=0.5] (5,1.5) node {\tiny{$\nu=(3,3,3)$}};
					\draw [scale=0.5] (5,0) node {\tiny{$\beta=(2,0,0)$}};
					\draw [scale=0.5] (5,-2) node {\tiny{$\ell(\nu)=3$}};
					\draw [scale=0.5] (5,-3.5) node {\tiny{$j=2$}};
				\end{scope}
			\end{tikzpicture}
		\end{center}
		The two rectangles are $(a^p)=(2^1)$ and $(b^q)=(3^4)$. Place the southeastern corner of $(b^q)$ at $T_{\zeta}(r_1)=T_{\zeta}(7)$ and the northwestern corner of $(a^p)$ at the cell $(0, \nu_1+1)$. The gray area forms $\xi$.\\
		\begin{center}
			\begin{tikzpicture}
				\begin{scope}[scale=0.5, shift={(-10,4)}]
					\draw [draw =none, fill=yellow!20] (2,1) rectangle (3,0);
					\draw [blue] (1.5,4.5) node {\tiny{$j$}};
					\draw [blue] (3.5,4.5) node {\tiny{$\nu_1+1$}};
					\draw [blue] (3.5,4.2)--(3.5,3.9);
					\draw [blue] (-0.5,3.5) node {\tiny{$0$}};
					\draw [draw=blue, fill=gray!50] (0,3) rectangle (2,2);
					\draw [draw=none, fill=gray!50] (0,4) rectangle (5,3);
					\draw [orange!50] (2,1)--(4,-1);
					\draw (4.2,-1) node {\tiny{$1$}};
					\draw [orange!50] (0,5)--(2,3);
					\draw (-0.2,5) node {\tiny{$-1$}};
					\draw [red,thick] (0,4) rectangle (3,0);
					\draw [red,thick] (3,4) rectangle (5,3);
					\draw [dotted] (0,3) grid (3,0);
					\draw [dotted] (0,4) grid (5,3);
					\draw [blue] (0,3) rectangle (3,0);
					\draw  (2.5,2.5) node {$1$};
					\draw  (2.5,1.5) node {$4$};
					\draw  (0.5,1.5) node {$2$};
					\draw  (1.5,1.5) node {$3$};
					\draw  (2.5,0.5) node {$7$};
					\draw  (0.5,0.5) node {$5$};
					\draw  (1.5,0.5) node {$6$};
				\end{scope}
				\begin{scope}[scale=0.5, shift={(2,4)}]
					\draw [scale=0.5] (5,6.5) node {\tiny{$(a^p)=(2^1)$}};
					\draw [scale=0.5] (5,4.5) node {\tiny{$(b^q)=(3^4)$}};
					\draw [scale=0.5] (5,2.5) node {\tiny{$\xi=(5,2,0,0,0)$}};
				\end{scope}
			\end{tikzpicture}
		\end{center}
		Furthermore, we obtain other parameters of Etingof-Freund-Ma functor as $N=p+q=5$, $p=1$ and $\mu=\frac{a-b}{N}=-\frac{1}{5}$.
		\begin{center}
			\begin{tikzpicture}
				\begin{scope}[scale=0.5, shift={(2,-12)}]
					\draw [blue] (0,4)--(5,4)--(5,3)--(2,3)--(2,2)--(0,2)--(0,4);
					\draw (2.5,5) node {\tiny{$\xi=(5,2,0,0,0)$}};
					\draw [dotted] (0,4) grid (5,3);
					\draw [dotted] (0,3) grid (2,2);
				\end{scope}
				\begin{scope}[scale=0.5, shift={(-10,-12)}]
					\draw [red] (0,4) rectangle (3,0);
					\draw [red] (3,4) rectangle (5,3);
					\draw [draw=none, fill=gray!30] (0,4) rectangle (5,3);
					\draw [draw=none, fill=gray!30] (0,3) rectangle (2,2);
					\draw (1.5,3) node {\tiny{$\xi$}};
					\draw [dotted] (0,4) grid (3,0);
					\draw [dotted] (3,4) grid (5,3);
					\draw (1.5,4.3) node {\tiny{$b$}};
					\draw (4,4.3) node {\tiny{$a$}};
					\draw (5,4)--(5,4.5);
					\draw (0,4)--(0,4.5);
					\draw (3,4)--(3,4.5);
					\draw[->] (1.2,4.3)--(0,4.3);
					\draw[->] (1.8,4.3)--(3,4.3);
					\draw[->] (3.7,4.3)--(3,4.3);
					\draw[->] (4.3,4.3)--(5,4.3);
					\draw (-0.3,2) node {\tiny{$q$}};
					\draw (5.3,3.5) node {\tiny{$p$}};
					\draw (0,4)--(-0.5,4);
					\draw (0,0)--(-0.5,0);
					\draw (5,4)--(5.5,4);
					\draw (5,3)--(5.5,3);
					\draw[->] (-0.3,2.3)--(-0.3,4);
					\draw[->] (-0.3,1.7)--(-0.3,0);
				\end{scope}
			\end{tikzpicture}
		\end{center}
	\end{eg}
	\textbf{Case 3b.} $j \leq 0$. Set two rectangles $$(a^p)=(1^1)$$ and $$(b^q)=((\nu_1-j+1)^{\ell(\nu)+1}).$$ Moreover, $\xi=(\xi_1, \cdots, \xi_{\ell(\nu)})$ with $\xi_1=\nu_1-j+2$ and $\xi_k=\beta_{k-1}-j+1$.\\
	\begin{eg}
		Let $L$ be an irreducible representation in $\co(H_7(1,-2))$ with a minimal weight $\zeta=[0,-2,-1,1,2,0,\textcolor{brown}{1}]$ such that $L_{\zeta} \neq 0$. There is only one corner $\zeta_{7}=1$. So $$\zeta_{r_1}=\zeta_7=1=\frac{|\kappa_2|}{2}.$$
		The standard tableau of $\zeta$ is as follows.\\
		\begin{center}
			\begin{tikzpicture}
				\begin{scope}[scale=0.5, shift={(-8,4)}]
					\draw [blue] (2,4)--(2,3)--(1,3);
					\draw [dotted] (0,5) grid (3,0);
					\draw [draw=none, fill=brown!20] (2,1) rectangle (3,0);
					\draw [orange!50] (2,1)--(4,-1);
					\draw (4.2,-1) node {\tiny{$1$}};
					\draw [orange!50] (-1,6)--(1,4);
					\draw (-1.2,6) node {\tiny{$-1$}};
					\draw [blue] (-0.5,4.5) node {\tiny{$0$}};
					\draw [blue] (0.5,5.5) node {\tiny{$j$}};
					\draw  (1.5,2.5) node {$1$};
					\draw  (1.5,1.5) node {$4$};
					\draw  (2.5,3.5) node {$2$};
					\draw  (2.5,2.5) node {$3$};
					\draw  (2.5,0.5) node {$7$};
					\draw  (1.5,0.5) node {$5$};
					\draw  (2.5,1.5) node {$6$};
					\draw [draw=blue, fill=gray!50] (1,4) rectangle (2,3);
					\draw [blue] (1,4) rectangle (3,0);
				\end{scope}
				\begin{scope}[scale=0.5, shift={(2,7)}]
					\draw [scale=0.5] (5,3) node {\tiny{$s=-1$}};
					\draw [scale=0.5] (5,1.5) node {\tiny{$\nu=(2,2,2,2)$}};
					\draw [scale=0.5] (5,0) node {\tiny{$\beta=(1,0,0,0)$}};
					\draw [scale=0.5] (5,-2) node {\tiny{$\ell(\nu)=4$}};
					\draw [scale=0.5] (5,-3.5) node {\tiny{$j=0$}};
				\end{scope}
			\end{tikzpicture}
		\end{center}
		The two rectangles are $(a^p)=(1^1)$ and $(b^q)=(3^5)$. Place the southeastern corner of $(b^q)$ at $T_{\zeta}(r_1)=T_{\zeta}(7)$ and the northwestern corner of $(a^p)$ at the cell $(0,\nu_1+1)$. The gray area forms $\xi$.\\
		\begin{center}
			\begin{tikzpicture}
				\begin{scope}[scale=0.5, shift={(-9,4)}]
					\draw [draw =none, fill=yellow!20] (2,1) rectangle (3,0);
					\draw [draw=blue, fill=gray!50] (1,4) rectangle (2,3);
					\draw [draw=none, fill=gray!50] (0,5) rectangle (1,0);
					\draw [draw=none, fill=gray!50] (1,5) rectangle (4,4);
					\draw [orange!50] (2,1)--(4,-1);
					\draw (4.2,-1) node {\tiny{$1$}};
					\draw [orange!50] (-1,6)--(1,4);
					\draw (-1.2,6) node {\tiny{$-1$}};
					\draw [blue] (-0.5,4.5) node {\tiny{$0$}};
					\draw [blue] (0.5,5.5) node {\tiny{$j$}};
					\draw [blue] (3.5,5.5) node {\tiny{$\nu_1+1$}};
					\draw [blue] (3.5,5.2)--(3.5,4.9);
					\draw [red,thick] (0,5) rectangle (3,0);
					\draw [red,thick] (3,5) rectangle (4,4);
					\draw [dotted] (0,5) grid (3,0);
					\draw [blue] (1,4) rectangle (3,0);
					\draw  (1.5,2.5) node {$1$};
					\draw  (1.5,1.5) node {$4$};
					\draw  (2.5,3.5) node {$2$};
					\draw  (2.5,2.5) node {$3$};
					\draw  (2.5,0.5) node {$7$};
					\draw  (1.5,0.5) node {$5$};
					\draw  (2.5,1.5) node {$6$};
				\end{scope}
				\begin{scope}[scale=0.5, shift={(1,5)}]
					\draw [scale=0.5] (5,6.5) node {\tiny{$(a^p)=(1^1)$}};
					\draw [scale=0.5] (5,4.5) node {\tiny{$(b^q)=(3^5)$}};
					\draw [scale=0.5] (5,2.5) node {\tiny{$\xi=(4,2,1,1,1,0)$}};
				\end{scope}
			\end{tikzpicture}
		\end{center}
		
		Furthermore, we obtain other parameters of Etingof-Freund-Ma functor as $N=p+q=6$, $p=1$ and $\mu=\frac{a-b}{N}=-\frac{1}{3}$.
		\begin{center}
			\begin{tikzpicture}
				\begin{scope}[scale=0.5, shift={(0,-12)}]
					\draw [blue] (0,5)--(4,5)--(4,4)--(2,4)--(2,3)--(1,3)--(1,0)--(0,0)--(0,5);
					\draw (2,5.5) node {\tiny{$\xi=(4,2,1,1,1,0)$}};
					\draw [dotted] (0,5) grid (1,0);
					\draw [dotted] (1,5) grid (2,3);
					\draw [dotted] (2,5) grid (4,4);
				\end{scope}
				\begin{scope}[scale=0.5, shift={(-10,-12)}]
					\draw [red] (0,5) rectangle (3,0);
					\draw [red] (3,5) rectangle (4,4);
					\draw [draw=none, fill=gray!30] (0,5) rectangle (4,4);
					\draw [draw=none, fill=gray!30] (0,4) rectangle (2,3);
					\draw [draw=none, fill=gray!30] (0,3) rectangle (1,0);
					\draw (1,4) node {\tiny{$\xi$}};
					\draw [dotted] (0,5) grid (3,0);
					\draw (1.5,5.3) node {\tiny{$b$}};
					\draw (3.5,5.3) node {\tiny{$a$}};
					\draw (0,5)--(0,5.5);
					\draw (3,5)--(3,5.5);
					\draw (4,5)--(4,5.5);
					\draw[->] (1.2,5.3)--(0,5.3);
					\draw[->] (1.8,5.3)--(3,5.3);
					\draw (-0.3,2.5) node {\tiny{$q$}};
					\draw (4.3,4.5) node {\tiny{$p$}};
					\draw (0,5)--(-0.5,5);
					\draw (0,0)--(-0.5,0);
					\draw (4,4)--(4.5,4);
					\draw (4,5)--(4.5,5);
					\draw[->] (-0.3,2.8)--(-0.3,5);
					\draw[->] (-0.3,2.2)--(-0.3,0);
				\end{scope}
			\end{tikzpicture}
		\end{center}
	\end{eg}

	\textbf{Case 4.} The corner $\zeta_{r_1}=-\frac{|\kappa_2|}{2}$ and there is no corner $\zeta_{r_2}$. Set $j=s-\frac{|\kappa_2|}{2}$. Then the cell $(0,j)$ is on the diagonal of eigenvalue $\frac{|\kappa_2|}{2}$. Let us discuss in two subcases.\\
	\textbf{Case 4a.} When $j \geq 1$. Set two rectangles $$(a^p)=(j^1)$$ and $$(b^q)=(\nu_1^{\ell(\nu)+1}).$$ Moreover, $\xi=(\xi_1, \cdots, \xi_{\ell(\nu)})$ with $\xi_1=\nu_1+j$ and $\xi_k=\beta_{k-1}$.\\
	\begin{eg}
		Let $L$ be an irreducible representation in $\co(H_7(1,-2))$ with a minimal weight $\zeta=[4,3,2,-2,1,0,\textcolor{brown}{-1}]$ such that $L_{\zeta} \neq 0$. There is only one corner $\zeta_{7}=-1$. So $$\zeta_{r_1}=\zeta_7=-1=-\frac{|\kappa_2|}{2}.$$
		The standard tableau of $\zeta$ is as follows.
		\begin{center}
			\begin{tikzpicture}
				\begin{scope}[scale=0.5, shift={(-10,3)}]
					\draw [blue] (5,2)--(5,1)--(0,1);
					\draw [draw=none, fill=brown!20] (5,1) rectangle (6,0);
					\draw  (0.5,0.5) node {$1$};
					\draw  (5.5,1.5) node {$4$};
					\draw  (1.5,0.5) node {$2$};
					\draw  (2.5,0.5) node {$3$};
					\draw  (5.5,0.5) node {$7$};
					\draw  (3.5,0.5) node {$5$};
					\draw  (4.5,0.5) node {$6$};
					\draw [draw=blue, fill=gray!50] (0,2) rectangle (5,1);
					\draw [dotted] (0,3) grid (6,0);
					\draw [orange!50] (5,1)--(7,-1);
					\draw (7.3,-1) node {\tiny{$-1$}};
					\draw [orange!50] (0,4)--(2,2);
					\draw (-0.2,4) node {\tiny{$1$}};
					\draw [blue] (1.5,3.5) node {\tiny{$j$}};
					\draw [blue] (-0.5,2.5) node {\tiny{$0$}};
					\draw [blue] (0,2) rectangle (6,0);
				\end{scope}
				\begin{scope}[scale=0.5, shift={(5,5)}]
					\draw [scale=0.5] (5,3) node {\tiny{$s=3$}};
					\draw [scale=0.5] (5,1.5) node {\tiny{$\nu=(6,6)$}};
					\draw [scale=0.5] (5,0) node {\tiny{$\beta=(5,0)$}};
					\draw [scale=0.5] (5,-2) node {\tiny{$\ell(\nu)=2$}};
					\draw [scale=0.5] (5,-3.5) node {\tiny{$j=2$}};
				\end{scope}
			\end{tikzpicture}
		\end{center}
		The two rectangles are $(a^p)=(2^1)$ and $(b^q)=(6^3)$. Place the southeastern corner of $(b^q)$ at $T_{\zeta}(r_1)=T_{\zeta}(7)$ and the northwestern corner of $(a^p)$ at cell $(0,\nu_1+1)=(0,7)$. The gray area forms $\xi$.
		\begin{center}
			\begin{tikzpicture}
				\begin{scope}[scale=0.5, shift={(-11,4)}]
					\draw [draw =none, fill=yellow!20] (5,1) rectangle (6,0);
					\draw [draw=blue, fill=gray!50] (0,2) rectangle (5,1);
					\draw [draw=none, fill=gray!50] (0,3) rectangle (8,2);
					\draw [orange!50] (5,1)--(7,-1);
					\draw (7.3,-1) node {\tiny{$-1$}};
					\draw [orange!50] (0,4)--(2,2);
					\draw (-0.2,4) node {\tiny{$1$}};
					\draw [blue] (1.5,3.5) node {\tiny{$j$}};
					\draw [blue] (6.5,3.5) node {\tiny{$\nu_1+1$}};
					\draw [blue] (6.5,3.2)--(6.5,2.9);
					\draw [blue] (-0.5,2.5) node {\tiny{$0$}};
					\draw [red,thick] (0,3) rectangle (6,0);
					\draw [red,thick] (6,3) rectangle (8,2);
					\draw [dotted] (0,3) grid (6,0);
					\draw [dotted] (6,3) grid (8,2);
					\draw [blue] (0,2) rectangle (6,0);
					\draw  (0.5,0.5) node {$1$};
					\draw  (5.5,1.5) node {$4$};
					\draw  (1.5,0.5) node {$2$};
					\draw  (2.5,0.5) node {$3$};
					\draw  (5.5,0.5) node {$7$};
					\draw  (3.5,0.5) node {$5$};
					\draw  (4.5,0.5) node {$6$};
				\end{scope}
				\begin{scope}[scale=0.5, shift={(4,3)}]
					\draw [scale=0.5] (5,6.5) node {\tiny{$(a^p)=(2^1)$}};
					\draw [scale=0.5] (5,4.5) node {\tiny{$(b^q)=(6^3)$}};
					\draw [scale=0.5] (5,2.5) node {\tiny{$\xi=(8,5,0,0)$}};
				\end{scope}
			\end{tikzpicture}
		\end{center}
		
		Furthermore, we obtain other parameters of Etingof-Freund-Ma functor as $N=q+p=4$, $q=3$ and $\mu=\frac{b-a}{N}=1$.
		\begin{center}
			\begin{tikzpicture}
				\begin{scope}[scale=0.5, shift={(2,-12)}]
					\draw [blue] (0,3)--(8,3)--(8,2)--(5,2)--(5,1)--(0,1)--(0,3);
					\draw (4,3.5) node {\tiny{$\xi=(8,5,0,0)$}};
					\draw [dotted] (0,3) grid (8,2);
					\draw [dotted] (0,2) grid (5,1);
				\end{scope}
				\begin{scope}[scale=0.5, shift={(-10,-12)}]
					\draw [red] (0,3) rectangle (6,0);
					\draw [red] (6,3) rectangle (8,2);
					\draw [draw=none, fill=gray!30] (0,3) rectangle (8,2);
					\draw [draw=none, fill=gray!30] (0,2) rectangle (5,1);
					\draw (3,2) node {\tiny{$\xi$}};
					\draw [dotted] (0,3) grid (8,2);
					\draw [dotted] (0,2) grid (6,0);
					\draw (3,3.3) node {\tiny{$b$}};
					\draw (7,3.3) node {\tiny{$a$}};
					\draw (0,3)--(0,3.5);
					\draw (6,3)--(6,3.5);
					\draw (8,3)--(8,3.5);
					\draw[->] (2.7,3.3)--(0,3.3);
					\draw[->] (3.3,3.3)--(6,3.3);
					\draw[->] (6.7,3.3)--(6,3.3);
					\draw[->] (7.3,3.3)--(8,3.3);
					\draw (-0.3,1.5) node {\tiny{$q$}};
					\draw (8.3,2.5) node {\tiny{$p$}};
					\draw (0,3)--(-0.5,3);
					\draw (0,0)--(-0.5,0);
					\draw (8,3)--(8.5,3);
					\draw (8,2)--(8.5,2);
					\draw[->] (-0.3,1.8)--(-0.3,3);
					\draw[->] (-0.3,1.2)--(-0.3,0);
				\end{scope}
			\end{tikzpicture}
		\end{center}
	\end{eg}
	\textbf{Case 4b.} When $j \leq 0$. Set two rectangles $$(a^p)=(1^1)$$ and $$(b^q)=((\nu_1-j+1)^{\ell(\nu)+1}).$$ Moreover, $\xi=(\xi_1, \cdots, \xi_{\ell(\nu)})$ with $\xi_1=\nu_1-j+2$ and $\xi_k=\beta_{k-1}-j+1$.\\
	\begin{eg}
		Let $L$ be an irreducible representation in $\co(H_7(1,-2))$ with a minimal weight $\zeta=[0,-1,2,1,-2,0,\textcolor{brown}{-1}]$ such that $L_{\zeta} \neq 0$. There is only one corner $\zeta_{7}=-1$. So $$\zeta_{r_1}=\zeta_7=-1=-\frac{|\kappa_2|}{2}.$$
		The standard tableau of $\zeta$ is as follows.
		\begin{center}
			\begin{tikzpicture}
				\begin{scope}[scale=0.5, shift={(-10,4)}]
					\draw [blue] (2,2)--(2,1)--(1,1);
					\draw [dotted] (0,3) grid (5,0);
					\draw [blue] (-0.5, 2.5) node {\tiny{$0$}};
					\draw [blue] (0.5, 3.5) node {\tiny{$j$}};
					\draw [draw=none, fill=brown!20] (4,1) rectangle (5,0);
					\draw [orange!50] (4,1)--(6,-1);
					\draw (6.3,-1) node {\tiny{$-1$}};
					\draw  (2.5,1.5) node {$1$};
					\draw  (2.5,0.5) node {$4$};
					\draw  (3.5,1.5) node {$2$};
					\draw  (1.5,0.5) node {$3$};
					\draw  (4.5,0.5) node {$7$};
					\draw  (4.5,1.5) node {$5$};
					\draw  (3.5,0.5) node {$6$};
					\draw [draw=blue, fill=gray!50] (1,2) rectangle (2,1);
					\draw [blue] (1,2) rectangle (5,0);
				\end{scope}
				\begin{scope}[scale=0.5, shift={(2,5)}]
					\draw [scale=0.5] (5,3) node {\tiny{$s=1$}};
					\draw [scale=0.5] (5,1.5) node {\tiny{$\nu=(4,4)$}};
					\draw [scale=0.5] (5,0) node {\tiny{$\beta=(1,0)$}};
					\draw [scale=0.5] (5,-2) node {\tiny{$\ell(\nu)=2$}};
					\draw [scale=0.5] (5,-3.5) node {\tiny{$j=0$}};
				\end{scope}
			\end{tikzpicture}
		\end{center}
		The two rectangles are $(a^p)=(1^1)$ and $(b^q)=(5^3)$. Place the southeastern corner of $(b^q)$ at $T_{\zeta}(r_1)=T_{\zeta}(7)$ and the northwestern corner of $(a^p)$ at the cell $(0,\nu_1+1)$. The gray area forms $\xi$.
		\begin{center}
			\begin{tikzpicture}
				\begin{scope}[scale=0.5, shift={(-10,4)}]
					\draw [blue] (-0.5, 2.5) node {\tiny{$0$}};
					\draw [blue] (0.5, 3.5) node {\tiny{$j$}};
					\draw [blue] (5.5, 3.5) node {\tiny{$\nu_1+1$}};
					\draw [blue] (5.5,3.2)--(5.5,2.7);
					\draw [draw =none, fill=yellow!20] (4,1) rectangle (5,0);
					\draw [orange!50] (4,1)--(6,-1);
					\draw (6.3,-1) node {\tiny{$-1$}};
					\draw [draw=blue, fill=gray!50] (1,2) rectangle (2,1);
					\draw [draw=none, fill=gray!50] (0,3) rectangle (1,0);
					\draw [draw=none,fill=gray!50] (1,3) rectangle (6,2);
					\draw [red,thick] (0,3) rectangle (5,0);
					\draw [red,thick] (5,3) rectangle (6,2);
					\draw [dotted](0,3) grid (5,0);
					\draw [blue] (1,2) rectangle (5,0);
					\draw  (2.5,1.5) node {$1$};
					\draw  (2.5,0.5) node {$4$};
					\draw  (3.5,1.5) node {$2$};
					\draw  (1.5,0.5) node {$3$};
					\draw  (4.5,0.5) node {$7$};
					\draw  (4.5,1.5) node {$5$};
					\draw  (3.5,0.5) node {$6$};
				\end{scope}
				\begin{scope}[scale=0.5, shift={(2,3)}]
					\draw [scale=0.5] (5,6.5) node {\tiny{$(a^p)=(1^1)$}};
					\draw [scale=0.5] (5,4.5) node {\tiny{$(b^q)=(5^3)$}};
					\draw [scale=0.5] (5,2.5) node {\tiny{$\xi=(6,2,1,0)$}};
				\end{scope}
			\end{tikzpicture}
		\end{center}
		Furthermore, we obtain other parameters of Etingof-Freund-Ma functor as $N=q+p=4$, $q=3$ and $\mu=\frac{b-a}{N}=1$.
		\begin{center}
			\begin{tikzpicture}
				\begin{scope}[scale=0.5, shift={(2,-12)}]
					\draw [blue] (0,3)--(6,3)--(6,2)--(2,2)--(2,1)--(1,1)--(1,0)--(0,0)--(0,3);
					\draw (2,3.5) node {\tiny{$\xi=(6,2,1,0)$}};
					\draw [dotted] (0,3) grid (6,2);
					\draw [dotted] (0,2) grid (2,1);
				\end{scope}
				\begin{scope}[scale=0.5, shift={(-10,-12)}]
					\draw [red] (0,3) rectangle (5,0);
					\draw [red] (5,3) rectangle (6,2);
					\draw [draw=none,fill=gray!30] (0,3) rectangle (6,2);
					\draw [draw=none,fill=gray!30] (0,2) rectangle (2,1);
					\draw [draw=none,fill=gray!30] (0,1) rectangle (1,0);
					\draw (1,2) node {\tiny{$\xi$}};
					\draw [dotted] (0,3) grid (5,0);
					\draw (2.5,3.3) node {\tiny{$b$}};
					\draw (5.5,3.3) node {\tiny{$a$}};
					\draw (0,3)--(0,3.5);
					\draw (5,3)--(5,3.5);
					\draw (6,3)--(6,3.5);
					\draw[->] (2.2,3.3)--(0,3.3);
					\draw[->] (2.8,3.3)--(5,3.3);
					\draw (-0.3,1.5) node {\tiny{$q$}};
					\draw (6.3,2.5) node {\tiny{$p$}};
					\draw (0,3)--(-0.5,3);
					\draw (0,0)--(-0.5,0);
					\draw (6,2)--(6.5,2);
					\draw (6,3)--(6.5,3);
					\draw[->] (-0.3,1.8)--(-0.3,3);
					\draw[->] (-0.3,1.2)--(-0.3,0);
				\end{scope}
			\end{tikzpicture}
		\end{center}
	\end{eg}

	\textbf{Case 5.} The corner $\zeta_{r_1}<-\frac{|\kappa_2|}{2}$. Let $j_1=\nu_{\ell(\nu)+\frac{|\kappa_2|}{2}}+\zeta_{r_1}$ and $j_2=\nu_{\ell(\nu)-\frac{|\kappa_2|}{2}}+\zeta_{r_1}$. Set two rectangles $$(a^p)=((\nu_1-j_1)^{\ell(\nu)})$$ and $$(b^q)=((\nu_1-j_2)^{\ell(\nu)}).$$
	\begin{claim}
		According to the setting above, the number $\nu_{\ell(\nu)}-j_1-j_2 \geq 0$
	\end{claim}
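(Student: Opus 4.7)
The plan is to adapt the argument of Claim 10.13 essentially verbatim, replacing the role of the corner at content $-|\kappa_2|/2$ (which was available in Case 1) with the role of the single deep corner at content $\zeta_{r_1}<-|\kappa_2|/2$. First I would use that $\zeta_{r_1}$ is a corner of the minimal weight $\zeta$ to produce a weight $\tilde\zeta$ with $L_{\tilde\zeta}\neq 0$, $Im(T_{\tilde\zeta})=Im(T_{\zeta})$, and $T_{\tilde\zeta}(n)$ equal to the corner cell corresponding to $\zeta_{r_1}$. Such a $\tilde\zeta$ exists because, by Theorem 7.13 and Proposition 8.3, the weight vectors $\phi_w v_{\zeta}$ for $w\in S_n$ account for every standard tableau on the fixed shape $Im(T_\zeta)$, and the corners are exactly the admissible positions for the letter $n$.

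Second, since $\tilde\zeta_n=\zeta_{r_1}<-|\kappa_2|/2$ we have $\tilde\zeta_n\neq\pm\kappa_2/2$, so Lemma 7.8 guarantees that for any nonzero $v\in L_{\tilde\zeta}$ the vector $\phi_n v$ is a nonzero weight vector in $L_{\gamma_n\tilde\zeta}$. Because the representation is $\Y$-semisimple with one-dimensional weight spaces, this forces the existence of a standard tableau $T_{\gamma_n\tilde\zeta}$ whose shape is the skew shape obtained from $Im(T_{\tilde\zeta})$ by deleting the corner $T_{\tilde\zeta}(n)$ and adjoining the cell determined by the $\m_n$-move of Proposition 8.3; equivalently, the content of the new cell is $-\tilde\zeta_n+\s$, which by the formulas defining $j_1$ and $j_2$ places it in row $N-i_1+1$ at column $j_1+j_2-\nu_{\ell(\nu)}+1$ relative to the geometry of the two rectangles $(a^p)$ and $(b^q)$.

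Third, for $T_{\gamma_n\tilde\zeta}$ to be a standard tableau its image must genuinely be a skew shape inside the ambient window, which forces the new column index to satisfy $j_1+j_2-\nu_{\ell(\nu)}+1\leq 1$, i.e.\ exactly $\nu_{\ell(\nu)}-j_1-j_2\geq 0$. This is the desired inequality, and it is the precise analogue of the conclusion $\nu_{i_2}-j_1-j_2\geq 0$ of Claim 10.13.

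The main obstacle is purely bookkeeping: one must carefully translate the content-shift conventions (the constant $\s$, the sign flip under $\gamma_n$, and the identification of the involved rectangles via $a$, $b$, $p$, $q$) into the row/column coordinates of the cell produced by the $\m_n$-move, and verify that the image $j_1+j_2-\nu_{\ell(\nu)}+1$ of the new column matches the formulas of $j_1$ and $j_2$ given in Case 5. Once these identifications are in place, the skew-shape condition on $Im(T_{\gamma_n\tilde\zeta})$ yields the claim immediately, by the same reasoning used in Claim 10.13.
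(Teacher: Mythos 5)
Your proposal is correct and follows essentially the same route as the paper's own proof: both produce a weight $\tilde\zeta$ with $L_{\tilde\zeta}\neq 0$ and the letter $n$ placed at the deep corner, apply $\phi_n$ (nonzero because $\tilde\zeta_n=\zeta_{r_1}<-\frac{|\kappa_2|}{2}$ is not $\pm\frac{\kappa_2}{2}$), and extract the inequality from the requirement that $Im(T_{\gamma_n\tilde\zeta})$ be a genuine skew shape. The only differences are notational bookkeeping; your column index $j_1+j_2-\nu_{\ell(\nu)}+1$ agrees with the paper's cell $(\ell(\nu)+1,\,2\ell(\nu)-\nu_{\ell(\nu)}+2s+1)$ after substituting $\zeta_{r_1}=\ell(\nu)-\nu_{\ell(\nu)}+s$.
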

	
	\begin{proof}
		There exist a weight $\tilde{\zeta}$ such that $L_{\tilde{\zeta}} \neq 0$, $Im(T_{\tilde{\zeta}})=Im(T_{\zeta})$ and $T_{\tilde{\zeta}}(n)=(\ell(\nu), \nu_{\ell(\nu)})$. Let $v$ be a nonzero weight vector of weight $\tilde{\zeta}$. Since $\zeta_{r_1}<-\frac{|\kappa_2|}{2}$, we obtain a nonzero weight vector $\phi_n v$ of weight $\gamma_n \tilde{\zeta}$. Moreover, $$Im(T_{\gamma_n \tilde{\zeta}})=Im(T_{\zeta}) \setminus \{(\ell(\nu), \nu_{\ell(\nu)})\} \cup \{(\ell(\nu)+1, 2\ell(\nu)-\nu_{\ell(\nu)}+2s+1)\}.$$
		Since $Im(T_{\gamma_n \tilde{\zeta}})$ is a skew shape, it follows $2\ell(\nu)-\nu_{\ell(\nu)}+2s+1 \leq 1$. Applying $j_1=\nu_{\ell(\nu)}+\frac{|\kappa_2|}{2}+\zeta_{r_1}$ and $j_2=\nu_{\ell(\nu)}-\frac{|\kappa_2|}{2}+\zeta_{r_1}$, the statement $\nu_{\ell(\nu)}-j_1-j_2 \geq 0$ follows.
	\end{proof}
	Set $\xi^{(1)}=(\xi^{(1)}_1, \cdots, \xi^{(1)}_{\ell(\nu)})$ with $$\xi^{(1)}_k=\beta_k+\nu_1-j_1-j_2$$
	for $k=1, \cdots, \ell(\nu)$, $\xi^{(2)}=(\xi^{(2)}_1, \cdots, \xi^{(2)}_{\ell(\nu)})$ with
	$$\xi^{(2)}_k=\nu_1-\nu_{\ell(\nu)-k+1}$$
	for $k=1, \cdots, \ell(\nu)$ and $\xi=(\xi_1,\cdots, \xi_{2\ell(\nu)})$ with
	$$\xi_k=\xi^{(1)}_k$$ for $k=1,\cdots, \ell(\nu)$ and $$\xi_k=\xi^{(2)}_{k-\ell(\nu)}$$ for $k=\ell(\nu)+1, \cdots, 2\ell(\nu)$.
	\begin{rmk}
		Claim 10.22 implies the following two facts.
		\begin{enumerate}
			\item It follows $\nu_1-j_1-j_2 \geq 0$.
			\item The inequality $\nu_1-\nu_{\ell(\nu)}=\xi^{(2)}_1 \leq \xi^{(1)}_{\ell(\nu)}=\nu_1-j_1-j_2$ holds and hence $\xi$ is a well-defined Young diagram.
		\end{enumerate}
	\end{rmk}
	\begin{eg}
		Let $L$ be an irreducible representation in $\co(H_7(1,-2))$ with a minimal weight $\zeta=[-2,-1,-5,\textcolor{brown}{-6},-3,\textcolor{brown}{-4},\textcolor{brown}{-2}]$ such that $L_{\zeta} \neq 0$. The corners of $\zeta$ are $\zeta_4=-6$, $\zeta_6=-4$ and $\zeta_7=-2$. So $\zeta_{r_1}=\zeta_7=-2$. The standard tableau of $\zeta$ is as follows.
		\begin{center}
			\begin{tikzpicture}
				\begin{scope}[scale=0.5, shift={(-10,4)}]
					\draw [dotted] (3,2) grid (5,0);
					\draw [dotted](5,3) grid (6,1);
					\draw [draw =none, fill=brown!20] (4,1) rectangle (5,0);
					\draw [draw =none, fill=brown!20] (5,2) rectangle (6,1);
					\draw [orange!50] (4,1)--(6,-1);
					\draw (6.3, -1) node {\tiny{$-2$}};
					\draw [orange!50] (3,1)--(5,-1);
					\draw (5.3, -1) node {\tiny{$-1$}};
					\draw  (3.5,1.5) node {$1$};
					\draw  (6.5,2.5) node {$4$};
					\draw  (3.5,0.5) node {$2$};
					\draw  (4.5,0.5) node {$7$};
					\draw  (5.5,1.5) node {$6$};
					\draw  (4.5,1.5) node {$5$};
					\draw  (5.5,2.5) node {$3$};
					\draw [draw=blue, fill=gray!50] (3,3) rectangle (5,2);
					\draw [blue] (3,2)--(5,2)--(5,3);
					\draw [blue] (3,3)--(3,0)--(5,0)--(5,1)--(6,1)--(6,2)--(7,2)--(7,3)--(3,3);
				\end{scope}
				\begin{scope}[scale=0.5, shift={(2,5)}]
					\draw [scale=0.5] (5,3) node {\tiny{$s=-3$}};
					\draw [scale=0.5] (5,1.5) node {\tiny{$\nu=(4,3,2)$}};
					\draw [scale=0.5] (5,0) node {\tiny{$\beta=(2,0,0)$}};
					\draw [scale=0.5] (5,-1.5) node {\tiny{$\ell(\nu)=3$}};
				\end{scope}
			\end{tikzpicture}
		\end{center}
		The two rectangles $(a^p)=(3^3)$ and $(b^q)=(5^3)$ follow. Place the northeastern corner of $(a^p)=(3^3)$ at the cell $(1, \nu_1)$ and the southeastern corner of $(b^q)=(5^3)$ at the cell $(\ell(\nu), \ell(\nu)+\frac{|\kappa_2|}{2}+s$. The gray area on the left forms $\xi^{(1)}$ and the gray area on the right forms \rotatebox[origin=c]{180}{$\xi^{(2)}$}.
		\begin{center}
			\begin{tikzpicture}
				\begin{scope}[scale=0.5, shift={(-10,4)}]
					\draw [orange!50] (1,1)--(3,-1);
					\draw (3.2, -1) node {\tiny{$1$}};
					\draw [orange!50] (3,1)--(5,-1);
					\draw (5.3, -1) node {\tiny{$-1$}};
					\draw [draw =none, fill=yellow!20] (3,1) rectangle (4,0);
					\draw [draw =none, fill=yellow!20] (6,3) rectangle (7,2);
					\draw [draw=blue, fill=gray!50] (3,3) rectangle (5,2);
					\draw [draw=none, fill=gray!50] (-1,3) rectangle (3,0);
					\draw [draw=none,fill=gray!50] (5,1) rectangle (7,0);
					\draw [draw=none,fill=gray!50] (6,2) rectangle (7,1);
					\draw [blue] (-1.5,0.5) node {\tiny{$\ell(\nu)$}};
					\draw [blue] (3.5,3.5) node {\tiny{$\ell(\nu)+s+\frac{|\kappa_2|}{2}$}};
					\draw [blue] (-1.5,2.5) node {\tiny{$1$}};
					\draw [blue] (6.5,3.5) node {\tiny{$\nu_1$}};
					\draw [blue] (3.5,3.2)--(3.5,2.7);
					\draw [dotted] (-1,3) grid (7,0);
					\draw [red, thick] (-1,3) rectangle (4,0);
					\draw [red,thick] (4,3) rectangle (7,0);
					\draw [blue] (3,2)--(5,2)--(5,3);
					\draw [blue] (3,3)--(3,0)--(5,0)--(5,1)--(6,1)--(6,2)--(7,2)--(7,3)--(3,3);
					\draw  (3.5,1.5) node {$1$};
					\draw  (6.5,2.5) node {$4$};
					\draw  (3.5,0.5) node {$2$};
					\draw  (4.5,0.5) node {$7$};
					\draw  (5.5,1.5) node {$6$};
					\draw  (4.5,1.5) node {$5$};
					\draw  (5.5,2.5) node {$3$};
				\end{scope}
				\begin{scope}[scale=0.5, shift={(2,4)}]
					\draw [scale=0.5] (5,6.5) node {\tiny{$(a^p)=(3^3)$}};
					\draw [scale=0.5] (5,4.5) node {\tiny{$(b^q)=(5^3)$}};
					\draw [scale=0.5] (5,2.5) node {\tiny{$\xi^{(1)}=(6,4,4)$}};
					\draw [scale=0.5] (5,0.5) node {\tiny{$\xi^{(2)}=(2,1,0)$}};
				\end{scope}
			\end{tikzpicture}
		\end{center}
		So the three shapes $(a^p)$, $(b^q)$ and $\xi$ are set as follows. The other parameters of Etingof-Freund-Ma functor are set as $N=6$, $p=3$ and $\mu=1/3$.
		\begin{center}
			\begin{tikzpicture}
				\begin{scope}[scale=0.5,shift={(-10,3)}]
					\draw [red,thick] (-1,3) rectangle (4,0);
					\draw [red,thick] (4,3) rectangle (7,0);
					\draw [draw=none,fill=gray!30] (-1,3) rectangle (5,2);
					\draw [draw=none,fill=gray!30] (-1,2) rectangle (3,0);
					\draw (1,1.5) node {\tiny{$\xi^{(1)}$}};
					\draw [draw=none,fill=gray!30] (6,2) rectangle (7,1);
					\draw [draw=none,fill=gray!30] (5,1) rectangle (7,0);
					\draw (6.3,0.5) node {\tiny{\rotatebox{180}{$\xi^{(2)}$}}};
					\draw [dotted] (-1,3) grid (7,0);
					\draw (1.5,3.3) node {\tiny{$b$}};
					\draw (5.5,3.3) node {\tiny{$a$}};
					\draw (-1.3,1.5) node {\tiny{$q$}};
					\draw (7.3,1.5) node {\tiny{$p$}};
					\draw (-1,3)--(-1,3.5);
					\draw (4,3)--(4,3.5);
					\draw (7,3)--(7,3.5);
					\draw[->] (1.2,3.3)--(-1,3.3);
					\draw[->] (1.8,3.3)--(4,3.3);
					\draw[->] (5.2,3.3)--(4,3.3);
					\draw[->] (5.8,3.3)--(7,3.3);
					\draw (-1,3)--(-1.5,3);
					\draw (-1,0)--(-1.5,0);
					\draw (7,3)--(7.5,3);
					\draw (7,0)--(7.5,0);
					\draw[->] (-1.3,1.8)--(-1.3,3);
					\draw[->] (-1.3,1.2)--(-1.3,0);
					\draw[->] (7.3,1.8)--(7.3,3);
					\draw[->] (7.3,1.2)--(7.3,0);
				\end{scope}
				\begin{scope}[scale=0.5, shift={(2,4)}]
					\draw [blue] (-1,3)--(-1,0)--(3,0)--(3,2)--(5,2)--(5,3)--(-1,3);
					\draw [blue] (-1,0)--(-1,-2)--(0,-2)--(0,-1)--(1,-1)--(1,0);
					\draw (2,1.5) node {\tiny{$\xi^{(1)}$}};
					\draw (0,-0.7) node {\tiny{$\xi^{(2)}$}};
					\draw [dotted] (-1,3) grid (5,2);
					\draw [dotted] (-1,2) grid (3,0);
					\draw [dotted] (-1,0) grid (1,-1);
					\draw (2,4) node {\tiny{$\xi=(6,4,4,2,1,0)$}};
				\end{scope}
			\end{tikzpicture}
		\end{center}
	\end{eg}
	\begin{rmk}
		When we fix the number $n$, for different input $(\xi, N, p, \mu)$, we could actually get isomorphic $H_n$-modules. Consider the following example of representations of $H_3(1, -1)$. \\
		Let $\xi = (3,3,2)$, $N = 4$ , $ p = 1$ and $\mu = - \frac{1}{4}$.\\
		In this case,$ a=\mu q + \frac{|\xi| + n}{N} = 2$ and $b= - \mu p + \frac{|\xi| + n}{N} = 3$. Then the image $F=F_{3,1,- \frac{1}{4}}(V^{\xi})$ is an $H_3(1,-1)$-module with the following minimal shape $\varphi_{3,1,-\frac{1}{4}}^{\xi}=(5,3,3)/(3,3,2)$.\\
		\begin{center}
			\begin{tikzpicture}
				\begin{scope}[scale=0.4,shift={(0,0)}]
					\draw [red,thick] (0,3) rectangle (3,0);
					\draw [red,thick] (3,3) rectangle (5,2);
					\draw [draw=none, fill= gray!50] (0,3) rectangle (3,1);
					\draw (1.5,2) node {\tiny{$\xi$}};
					\draw [draw=none, fill= gray!50] (0,1) rectangle (2,0);
					\draw [dotted] (0,3) grid (3,0);
					\draw [dotted] (3,3) grid (5,2);
					\draw (1.5,3.3) node {\tiny{$b$}};
					\draw (4,3.3) node {\tiny{$a$}};
					\draw (-0.3,1.5) node {\tiny{$q$}};
					\draw (5.3,2.5) node {\tiny{$p$}};
					\draw (0,3)--(0,3.5);
					\draw (3,3)--(3,3.5);
					\draw (5,3)--(5,3.5);
					\draw[->] (1.2,3.3)--(0,3.3);
					\draw[->] (1.8,3.3)--(3,3.3);
					\draw[->] (4.2,3.3)--(5,3.3);
					\draw[->] (3.8,3.3)--(3,3.3);
					\draw (0,3)--(-0.5,3);
					\draw (0,0)--(-0.5,0);
					\draw (5,3)--(5.5,3);
					\draw (5,2)--(5.5,2);
					\draw[->] (-0.3,1.8)--(-0.3,3);
					\draw[->] (-0.3,1.2)--(-0.3,0);
					\draw [orange!50] (2,1)--(4,-1);
					\draw (4.2,-1) node {\tiny{$\frac{1}{2}$}};
				\end{scope}
			\end{tikzpicture}
		\end{center}
		Then the basis is indexed by the standard tableaux on the skew shapes:
		$(5,3,3)/\xi$, $(4,3,3,1)/\xi$ and $(3,3,3,2)/\xi$. There is a minimal weight $\zeta=[\frac{1}{2}, -\frac{5}{2}, -\frac{7}{2}]$ such that $F_{\zeta} \neq 0$.
		Now let us recover a functor $F_{n,p',\mu'}$ such that $F_{n,p',\mu'}(V^{\xi'})$ is an $H_3(1,-1)$-module with a minimal weight $\zeta=[\textcolor{brown}{\frac{1}{2}}, -\frac{5}{2}, \textcolor{brown}{-\frac{7}{2}}]$. According to Case 1, $(a'^{p'})=(3^1)$, $(b'^{q'})=(3^2)$, $\xi'=(4,2,0)$ and $\mu'=0$.\\
		\begin{center}
			\begin{tikzpicture}
				\begin{scope}[scale=0.4,shift={(0,0)}]
					\draw [draw=none, fill= gray!50] (0,2) rectangle (2,0);
					\draw [draw=none, fill= gray!50] (0,2) rectangle (4,1);
					\draw (1,1) node {\tiny{$\xi'$}};
					\draw [dotted] (0,2) grid (6,1);
					\draw [dotted] (0,1) grid (3,0);
					\draw [draw=none,fill=brown!20] (2,1) rectangle (3,0);
					\draw [draw=none,fill=brown!20] (5,2) rectangle (6,1);
					\draw [red,thick] (0,2) rectangle (3,0);
					\draw [red,thick] (3,2) rectangle (6,1);
					\draw (2.5,0.5) node {\tiny{$1$}};
					\draw (4.5,1.5) node {\tiny{$2$}};
					\draw (5.5,1.5) node {\tiny{$3$}};
					\draw (1.5,2.3) node {\tiny{$b'$}};
					\draw (4.5,2.3) node {\tiny{$a'$}};
					\draw (-0.3,1) node {\tiny{$q'$}};
					\draw (6.4,1.5) node {\tiny{$p'$}};
					\draw (0,2)--(0,2.5);
					\draw (3,2)--(3,2.5);
					\draw (6,2)--(6,2.5);
					\draw[->] (1.2,2.3)--(0,2.3);
					\draw[->] (1.8,2.3)--(3,2.3);
					\draw[->] (4.8,2.3)--(6,2.3);
					\draw[->] (4.2,2.3)--(3,2.3);
					\draw (0,2)--(-0.5,2);
					\draw (0,0)--(-0.5,0);
					\draw (6,2)--(6.5,2);
					\draw (6,1)--(6.5,1);
					\draw[->] (-0.3,1.3)--(-0.3,2);
					\draw[->] (-0.3,0.7)--(-0.3,0);
					\draw [orange!50] (2,1)--(4,-1);
					\draw (4.2,-1) node {\tiny{$\frac{1}{2}$}};
					\draw [orange!50] (5,2)--(7,0);
					\draw (7.3,0) node {\tiny{$-\frac{7}{2}$}};
				\end{scope}
			\end{tikzpicture}
		\end{center}
		
		
	\end{rmk}
	
	\subsection{Other $\Y$-semisimple representations}
	The image of the Etingof-Freund-Ma functor does not exhaust all the $\Y$-semisimple representations. The following are two examples of $\Y$-semisimple $H_n(1,\kappa_2)$ representation which are not in $\co(H_n(1,\kappa_2))$.
	\begin{eg}
		Obviously, the representation obtained under the Etingof-Freund-Ma does not contain a weight vector of weight $\zeta$ with $ -\frac{|\kappa_2|}{2}< \zeta_n < \frac{|\kappa_2|}{2}$.  
		
		Consider the representation of $H_3(1, -6)$ generated by the weight vector of weight $[1,2,-3]$. This representation has the following characters:\\
		\begin{center}
			\begin{tikzpicture}[scale=0.5]
				\begin{scope}[shift = {(8,0)}]
					\draw (0,0) node {\tiny{$[-3,-2,-1]$}};
					\draw [->] (0,-0.3)--(0, -1.7);
					\draw (-0.5,-1) node {\tiny{$\m_3$}};
					\draw (0,-2) node {\tiny{$[-3,-2,1]$}};
					\draw [->](0,-2.3)--(0, -3.7);
					\draw (-0.5,-3) node {\tiny{$\m_2$}};
					\draw (0,-4) node {\tiny{$[-3,1,-2]$}};
					\draw [->](0.3,-4.4)--(2.7, -5.7);
					\draw (2.0,-5) node {\tiny{$\m_3$}};
					\draw (4,-6) node {\tiny{$[-3,1,2]$}};
					\draw [->](-0.3,-4.4)--(-2.7, -5.7);
					\draw (-2,-5) node {\tiny{$\m_1$}};
					\draw (-4,-6) node {\tiny{$[1,-3,-2]$}};
					\draw [->](-2.7,-6.4)--(-0.3, -7.6);
					\draw (-2,-7) node {\tiny{$\m_3$}};
					\draw (0,-8) node {\tiny{$[1,-3,2]$}};
					\draw [->](2.7,-6.4)--(0.3, -7.6);
					\draw (2,-7) node {\tiny{$\m_1$}};
					\draw (0,-10) node {\tiny{$[1,2,-3]$}};
					\draw [->](0,-8.3)--(0, -9.7);
					\draw (-0.5,-9) node {\tiny{$\m_2$}};
				\end{scope}
			\end{tikzpicture}
		\end{center}
	\end{eg}

\bibliography{refs.bib}
\bibliographystyle{alpha}

\end{document}